\newtheorem{theorem}{Theorem}[section]
\newtheorem{lemma}[theorem]{Lemma}
\newtheorem{proposition}[theorem]{Proposition}
\newtheorem{corollary}[theorem]{Corollary}
\theoremstyle{remark}
\newtheorem{remark}[theorem]{Remark}
\theoremstyle{definition}
\newtheorem{definition}[theorem]{Definition}
\theoremstyle{definition}
\newtheorem{example}[theorem]{Example}
\title{The Glimm space of the minimal tensor product of C$^{\ast}$-algebras}
\author{David McConnell\thanks{This work was supported
by the Science Foundation Ireland under grant 11/RFP/MTH3187.}\\
\textit{School of Mathematics, Trinity College, Dublin 2, Ireland} \\ \textit{mcconnd@tcd.ie} 
}
\begin{document}
\maketitle
\begin{abstract}
We show that for C$^{\ast}$-algebras $A$ and $B$, there is a natural open bijection from $\mathrm{Glimm}(A) \times \mathrm{Glimm} (B)$ to $\mathrm{Glimm}(A \otimes_{\alpha} B )$ (where $A \otimes_{\alpha} B$ denotes the minimal C$^{\ast}$-tensor product), and identify a large class of C$^{\ast}$-algebras $A$ for which the map  is continuous for arbitrary $B$. As a consequence we determine the structure space of the centre of the multiplier algebra $ZM(A \otimes_{\alpha} B )$ in terms of $\mathrm{Glimm}(A)$ and $\mathrm{Glimm} (B)$, and give necessary and sufficient conditions for  the inclusion $ZM(A) \otimes ZM(B) \subseteq ZM(A \otimes_{\alpha} B)$ to be surjective. Further we show that when the Glimm spaces are considered as sets of ideals, the map $(G,H) \mapsto G \otimes_{\alpha} B + A \otimes_{\alpha} H$ implements the above bijection, extending a result of Kaniuth from~\cite{kaniuth} by eliminating the assumption of property (F).
\end{abstract}

The focus of our work is the relationship between a $C^*$-algebra $A$
and
its collection of primitive ideals $\mathrm{Prim}(A)$, a topological
space in the hull kernel topology, the associated complete
regularisation space
$\mathrm{Glimm}(A)$ of $\mathrm{Prim}(A)$, and in particular
how
$\mathrm{Glimm}(A \otimes_\alpha B)$ relates to $\mathrm{Glimm}(A)$
and
$\mathrm{Glimm}(B)$. Here $A \otimes_\alpha B$ is the usual (minimal)
tensor product of two $C^*$-algebras $A$ and $B$ and our work is
motivated by a desire to extend earlier work such as~\cite{kaniuth},~\cite{arch_cb},~\cite{lazar_tensor}.

The problem of representing a C$^{\ast}$-algebra as the section algebra of a bundle over a suitable base space may be viewed as that of finding a non-commutative analogue of the Gelfand-Naimark Theorem.  Significant contributions to the theory were made by Fell~\cite{fell}, Tomiyama~\cite{tomiyama2}, Dauns and Hofmann~\cite{dauns_hofmann}, Lee~\cite{lee} and others.  The topological space $\mathrm{Prim}(A)$ arises in this context as a natural choice for the base space.  However, a major obstacle to this approach is that $\mathrm{Prim}(A)$ is in general not sufficiently well-behaved as a topological space from the point of view of bundle theory.     

We define the topological space $\mathrm{Glimm}(A)$ as the complete regularisation of $\mathrm{Prim}(A)$ and denote by $\tau_{cr}$ the (completely regular) topology on this space induced by the continuous functions on $\mathrm{Prim}(A)$.  As a set, this space is the quotient of $\mathrm{Prim}(A)$ modulo the equivalence relation $\approx$ of inseparability by continuous functions on $\mathrm{Prim}(A)$.  By the \emph{Glimm ideals} of $A$ we mean the ideals given by taking the intersection of the primitive ideals in each $\approx$-equivalence class, see \S1. 

 In~\cite{dauns_hofmann}, Dauns and Hofmann showed that any C$^{\ast}$-algebra $A$ may be represented as the section algebra of an upper semicontinuous C$^{\ast}$-bundle over $\mathrm{Glimm}(A)$ if this space is locally compact, or over its Stone-\v{C}ech compactification otherwise.  Under this representation the fibre algebras are given by the Glimm quotients of $A$.  Thus in the case of the minimal tensor product of C$^{\ast}$-algebras $A$ and $B$, a natural question that arises is to determine $\mathrm{Glimm}(A \otimes_{\alpha} B )$ in terms of $\mathrm{Glimm}(A)$ and $\mathrm{Glimm}(B)$, both topologically and as a collection of ideals of $A \otimes_{\alpha} B$.  A related problem (over more general base spaces) was studied by Kirchberg and Wassermann in~\cite{kirch_wass}, and later by Archbold in~\cite{arch_cb}, by considering the fibrewise tensor product of the corresponding bundles of $A$ and $B$.

We will denote by $\mathrm{Id}'(A)$ the set of all proper norm-closed two sided ideals of $A$.  By $\mathrm{Fac}(A)$ we mean the space of kernels of factor representations of $A$, which is a topological space in the hull-kernel topology.  

There is a natural embedding of $\mathrm{Id}'(A) \times \mathrm{Id}'(B)$ into $\mathrm{Id}'(A \otimes_{\alpha} B )$ sending $(I,J) \mapsto \mathrm{ker}(q_I \otimes q_J)$, where $q_I$ and $q_J$ are the quotient maps.  The restrictions of this map to the spaces of primitive and factorial ideals are known to be homeomorphisms onto dense subspaces of $\mathrm{Prim}(A \otimes_{\alpha} B)$ and $\mathrm{Fac}(A \otimes_{\alpha} B)$ respectively, see~\cite{wulfsohn},~\cite{lazar_tensor}.  Recently A.J. Lazar has shown in~\cite{lazar_tensor} that any continuous map $f: \mathrm{Prim}(A) \times \mathrm{Prim} (B) \rightarrow Y$, where $Y$ is a $T_1$ space has a continuous 'extension' to $\mathrm{Prim}(A \otimes_{\alpha} B)$, where we identify $\mathrm{Prim}(A) \times \mathrm{Prim}(B)$ with its image under the above embedding.

We begin in \S1 with considerations of the complete regularisation of a product $X \times Y$ of topological spaces, gathering together and extending results on this theme from the literature. Central to this is the theory of \emph{w}-compact spaces, introduced by Ishii in~\cite{ishii}.  We establish (Proposition~\ref{p:wcpct}) conditions on a C$^{\ast}$-algebra $A$ that ensure that the complete regularisation of $\mathrm{Prim}(A) \times \mathrm{Prim}(B)$ is homeomorphic to the product space $\mathrm{Glimm}(A) \times \mathrm{Glimm}(B)$ for any C$^{\ast}$-algebra $B$.  In the presence of a countable approximate unit for $A$, a necessary and sufficient condition for this to occur is that $\mathrm{Glimm}(A)$ be locally compact.  In the  general case, sufficient conditions include compactness of $\mathrm{Prim}(A)$ (e.g. if $A$ is unital), or that the complete regularisation map of $\mathrm{Prim}(A)$ is open (e.g. if $A$ is quasi-standard, see~\cite{arch_som_qs}).   Local compactness of $\mathrm{Glimm}(A)$ is always a necessary condition.

Using the extension result of Lazar together with the universal property of the  complete regularisation of a topological space described in \S1, we show (Theorem~\ref{t:homeo}) that as a topological space $\mathrm{Glimm}(A\otimes_\alpha B)$ is the same as (or can be identified in a natural way with) the complete regularisation of $\mathrm{Prim}(A) \times \mathrm{Prim}(B)$. We investigate conditions under which the latter coincides with the cartesian product space $\mathrm{Glimm}(A) \times \mathrm{Glimm}(B)$, while showing that the underlying sets always agree.  In Corollary~\ref{c:wcpct2} we give some rather general conditions on $A$ or on $B$ for this coincidence but show an example in \S6 where it fails.

A well-known Corollary of the Dauns-Hofmann Theorem is the existence of a $\ast$-isomorphism identifying the centre $ZM(A)$ of the multiplier algebra of $A$ with the C$^{\ast}$-algebra of bounded continuous functions on $\mathrm{Glimm}(A)$ (equivalently, on $\mathrm{Prim}(A)$).  We show in Theorem~\ref{t:zma} that for any C$^{\ast}$-algebras $A$ and $B$, $ZM(A \otimes_{\alpha} B )$ can be identified with the bounded continuous functions on (the complete regularisation of) $\mathrm{Prim}(A) \times \mathrm{Prim}(B)$, and give necessary and sufficient conditions (Theorem~\ref{p:zma}) for $ZM(A) \otimes ZM(B) = ZM(A \otimes_{\alpha} B )$.

In \S4, we determine the set of Glimm ideals of $A \otimes_{\alpha} B$ in terms of the Glimm ideals of $A$ and $B$. In order to do this, we use an alternative construction of $\mathrm{Glimm}(A)$ based on the complete regularisation of $\mathrm{Fac}(A)$ (rather than $\mathrm{Prim}(A)$) first considered by Kaniuth in~\cite{kaniuth}.  The reason for this is the fact that there exists a continuous surjection $\mathrm{Fac}(A \otimes_{\alpha} B) \rightarrow \mathrm{Fac}(A) \times \mathrm{Fac}(B)$, while it is not known if the restriction of this map to $\mathrm{Prim}(A \otimes_{\alpha} B)$ has range $\mathrm{Prim}(A) \times \mathrm{Prim}(B)$.

We show in Theorem~\ref{t:homeo2} that the map $\mathrm{Glimm}(A) \times \mathrm{Glimm}(B) \rightarrow \mathrm{Glimm}(A \otimes_{\alpha} B )$ sending $(G,H) \mapsto G \otimes_{\alpha} B + A \otimes_{\alpha} H$ determines the homeomorphism of these spaces when considered as sets of ideals.  This extends Kaniuth's result~\cite[Theorem 2.3]{kaniuth}, which was proved under the assumption that $A \otimes_{\alpha} B$ satisfies Tomiyama's property (F), defined below.

In \S5 we consider the problem of determining conditions for which the canonical upper semicontinuous bundle representation of $A \otimes_{\alpha} B$ over $\mathrm{Glimm}(A \otimes_{\alpha} B )$ of~\cite{dauns_hofmann} is in fact continuous (i.e. $A \otimes_{\alpha} B$ defines a maximal full algebra of operator fields in the sense of Fell~\cite{fell}).  The main result of this section (Theorem~\ref{t:rho_alpha}) shows that, under the assumption that $\mathrm{ker} (q_G \otimes q_H ) = G \otimes_{\alpha} B + A \otimes_{\alpha} H$ for all pairs of Glimm ideals $(G,H)$ of $A$ and $B$, this representation of $A \otimes_{\alpha} B$ is continuous precisely when the corresponding bundle representations of $A$ and $B$ (over $\mathrm{Glimm}(A)$ and $\mathrm{Glimm}(B)$ respectively) are continuous.  We also show that, under a different assumption that does not require that these ideals be equal, continuity of $A$ and $B$ is a necessary condition for continuity of $A \otimes_{\alpha} B$ (Proposition~\ref{p:proj}).

Let $\alpha$ and $\beta$ be states of $A$ and $B$ respectively.  Then the product state $\alpha \otimes \beta$ of $A \otimes_{\alpha} B$ is defined via $(\alpha \otimes \beta)(a \otimes b ) = \alpha (a) \beta (b)$ on elementary tensors $a \otimes b$ and extended to $A \otimes_{\alpha} B$ by linearity and continuity.  If $I,J \in \mathrm{Id}'(A \otimes_{\alpha} B )$ with $I \not\subseteq J$, then we say that a state $\gamma $ of $ A \otimes_{\alpha} B $ separates $I$ and $J$ if $\gamma (J) = \{ 0 \}$ and there exists $c \in I \backslash J$ with $\gamma (c) = 1$.  The minimal tensor product $A \otimes_{\alpha} B$ is said to satisfy Tomiyama's property (F) if given any pair $I,J \in \mathrm{Id}'(A \otimes_{\alpha} B)$ with $I \neq J$, there is a product state of $A \otimes_{\alpha} B$ separating $I$ and $J$.  There are many equivalent characterisations of property (F), see~\cite[Proposition 5.1]{lazar_tensor} for example.

For any C$^{\ast}$-algebra $A$ we denote by $M(A)$ its multiplier algebra and by $Z(A)$ its centre.  We say that $A$ is $\sigma$-unital if it admits a countable approximate identity. For $I \in \mathrm{Id}'(A)$ we denote by $\mathrm{hull}(I) = \{ P \in \mathrm{Prim}(A) : P \supseteq I$ and by $\mathrm{hull}_f (I) = \{ M \in \mathrm{Fac}(A) : M \supseteq I \}$.  For a collection $\mathcal{S} \subseteq \mathrm{Id}' (A)$ we let $k(S) = \cap \{ I : I \in \mathcal{S} \}$, the \emph{kernel} of $\mathcal{S}$.

For any topological space $X$ we will denote by $C(X)$ (resp. $C^b(X)$) the $\ast$-algebra of continuous (resp. bounded continuous) complex valued functions on $X$.  If $ \alpha : X \rightarrow Y$ is a continuous map between topological spaces we will denote by $\alpha^{\ast}: C (Y) \rightarrow C(X)$ the unique $\ast$-homomorphism given by $\alpha^{\ast} (f) = f \circ \alpha$ for $f \in C(Y)$.   

\section{The complete regularisation of a product of topological spaces}

Our terminology is that a topological space $X$ is said to be \emph{completely regular} (or a \emph{Tychonoff space}) if it is a Hausdorff space, and given any closed subset $F \subseteq X$ and a point $x \in X \backslash F$, there is a continuous function $f: X \rightarrow \mathbb{R}$ with $f(x)=1$ and $f(F) = \{ 0 \}$.  

For $f \in C (X)$, denote by $\mathrm{coz} (f) = \{ x \in X : f(x) \neq 0 \}$, the \emph{cozero set} of $f$.  Replacing $f$ with $\min ( | f |, 1)$, we may assume that any cozero set in $X$ is the cozero set of some continuous function $f: X \rightarrow [0,1]$.

Recall that for any topological space $X$ there is a canonically associated completely regular space $\rho X$, called the \emph{complete regularisation} of $X$, with the property that $C(X) \equiv C( \rho X)$ and $C^b (X) \equiv C^b (\rho X)$. Furthermore the assignment of $\rho X$ to $X$ defines a covariant functor, the \emph{Tychonoff functor}, from the category of topological spaces and continuous maps to the category of completely regular spaces.

Define an equivalence relation on $X$ as follows: for $x_1, x_2 \in X$ write $x_1 \approx x_2 $ if $f (x_1) = f (x_2)$ for all $f \in C^b (X)$. Let $\rho X = X / \approx$  and let $\rho_X : X \rightarrow \rho X$ be the quotient map.  Each $f \in C^b (X)$ defines a function $f^{\rho}$ on $\rho X$ by setting $f^{\rho}([x])=f(x)$, where $[x]$ denotes the $\approx$-equivalence class of $x$.  Denote by $\tau_{cr}$ the weak topology on $\rho X$ induced by the functions $\{ f^{\rho} : f \in C^b (X) \}$.  

It is shown in~\cite[Theorem 3.9]{gill_jer} that the space $\rho X$ constructed in this way is completely regular, and that the collection $\{ \mathrm{coz} (f^{\rho}) : f \in C^b (X) \}$ forms a basis for the topology $\tau_{cr}$. Moreover, the map $g \mapsto g \circ \rho_X$ is an isomorphism of $C^b ( \rho X)$ onto $C^b (X)$. 

An alternative construction of $\rho X$ is as follows: let $I$ denote the closed unit interval and $I^X$ the set of all continuous maps $f: X \rightarrow I$.  Let $P(X) = \prod \{ I_f : f \in I^X \}$, where $I_f = I$ for all $f \in I^X$, and let $\tau: X \rightarrow P(X)$ be defined via $\tau (x) = (f(x))_{f \in I^X}$.  

Defining $I^{\rho X}, P( \rho X)$ and $\tau': \rho X \rightarrow P(\rho X)$ analogously, it is clear that $I^{\rho X} = \{ f^{\rho} : f \in I^X \}$ and hence $P( \rho X) = P(X)$. The definition of $f^{\rho}$, where $f \in I^X$, ensures that $\tau' \circ \rho_X = \tau$, hence $\tau(X)$ is equal to $\tau' ( \rho X)$.
On the other hand, it is well-known that $\tau'$ is a homeomorphism onto its image~\cite[Lemma 1.5]{walker}, so that $\tau (X)$ is homeomorphic to $\rho X$. 

The topological space $(\rho X, \tau_{cr} )$, or more precisely the triple $(\rho X , \tau_{cr}, \rho_X)$ constructed in this way is the complete regularisation of $X$.

Now let $X$ and $Y$ be topological spaces and $\phi: X \rightarrow Y$ a continuous map.  Then setting $\phi^{\rho} ( \rho_X (x))  = ( \rho_Y \circ \phi ) (x)$ gives  a map $\phi^{\rho} : \rho X \rightarrow \rho Y$ such that the diagram
\[
\xymatrix{ X \ar@{->}^{\phi}[r] \ar_{\rho_X }[d] & Y \ar^{\rho_Y}[d]  \\
\rho X \ar_{\phi^{\rho}}[r] & \rho Y 
}
\]
commutes.  To see that $\phi^{\rho}$ is continuous, let $U = \mathrm{coz} (f)$ be a cozero set in $\rho Y$.  Then $(\rho_Y \circ \phi)^{-1} (U)$ is precisely the cozero set in $X$ of the continuous function $f \circ \rho_Y \circ \phi $.  It follows that $(\phi^{\rho})^{-1} (U) = \mathrm{coz} ( f \circ \rho_Y \circ \phi )^{\rho}$ is open in $\rho X$ by the definition of $\tau_{cr}$.

Thus the assignment of $\rho X$ to $X$ defines a covariant functor from the category of topological spaces to the subcategory of completely regular  spaces, called the \emph{Tychonoff functor}.  The term Tychonoff functor was first used by Morita in~\cite[p. 32]{morita}.

\begin{lemma}
\label{l:prodcr}
Let $X$ and $Y$ be topological spaces.  Then there is an open bijection
\[
\rho (X) \times \rho (Y) \rightarrow \rho (X \times Y ) 
\]
sending $(\rho_X (x) , \rho_Y (y) ) \mapsto \rho_{X \times Y} (x,y)$.
\end{lemma}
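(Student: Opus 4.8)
Write $\Phi$ for the proposed map, so that $\Phi(\rho_X(x),\rho_Y(y)) = \rho_{X\times Y}(x,y)$. The plan is to check in turn that $\Phi$ is well defined, bijective, and open; note that continuity of $\Phi$ (which would upgrade the statement to a homeomorphism) is \emph{not} claimed here and, as the later sections indicate, can genuinely fail. The first task is well-definedness: supposing $\rho_X(x)=\rho_X(x')$ and $\rho_Y(y)=\rho_Y(y')$, I must show $\rho_{X\times Y}(x,y)=\rho_{X\times Y}(x',y')$, i.e. $h(x,y)=h(x',y')$ for every $h\in C^b(X\times Y)$. The key observation is that such an $h$ is separately bounded and continuous: the slice $u\mapsto h(u,y)$ lies in $C^b(X)$ and the slice $v\mapsto h(x',v)$ lies in $C^b(Y)$. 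Evaluating one variable at a time and using $x\approx x'$ then $y\approx y'$ gives $h(x,y)=h(x',y)=h(x',y')$. I regard this slicing argument as the one conceptual point in the proof.

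Bijectivity is then quick. Surjectivity is immediate, since $\rho_{X\times Y}$ is onto and every $\rho_{X\times Y}(x,y)$ is by definition $\Phi(\rho_X(x),\rho_Y(y))$. For injectivity, suppose $\rho_{X\times Y}(x,y)=\rho_{X\times Y}(x',y')$. Writing $\pi_1:X\times Y\to X$ and $\pi_2:X\times Y\to Y$ for the projections, for each $f\in C^b(X)$ the function $f\circ\pi_1$ lies in $C^b(X\times Y)$, whence $f(x)=(f\circ\pi_1)(x,y)=(f\circ\pi_1)(x',y')=f(x')$; thus $\rho_X(x)=\rho_X(x')$, and symmetrically $\rho_Y(y)=\rho_Y(y')$. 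Hence $\Phi$ is a bijection whose inverse sends $\rho_{X\times Y}(x,y)\mapsto(\rho_X(x),\rho_Y(y))$.

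The main step is openness. Since the cozero sets $\mathrm{coz}(f^{\rho})$, $f\in C^b(X)$, form a basis for $\tau_{cr}$ on $\rho X$ and similarly on $\rho Y$, the products $\mathrm{coz}(f^{\rho})\times\mathrm{coz}(g^{\rho})$ form a basis for $\rho X\times\rho Y$; as $\Phi$ carries unions to unions, it suffices to show each such basic set has open image. Setting $h=(f\circ\pi_1)(g\circ\pi_2)\in C^b(X\times Y)$, I claim $\Phi\bigl(\mathrm{coz}(f^{\rho})\times\mathrm{coz}(g^{\rho})\bigr)=\mathrm{coz}(h^{\rho})$: indeed $\Phi(\rho_X(x),\rho_Y(y))\in\mathrm{coz}(h^{\rho})$ iff $h(x,y)=f(x)g(y)\neq 0$, iff $f(x)\neq 0$ and $g(y)\neq 0$, iff $(\rho_X(x),\rho_Y(y))\in\mathrm{coz}(f^{\rho})\times\mathrm{coz}(g^{\rho})$. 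Since $\mathrm{coz}(h^{\rho})$ is basic open in $\rho(X\times Y)$, this shows $\Phi$ is open. (Alternatively, openness is automatic from functoriality: the projections induce continuous maps $\pi_1^{\rho}:\rho(X\times Y)\to\rho X$ and $\pi_2^{\rho}:\rho(X\times Y)\to\rho Y$ whose pairing is a continuous inverse of $\Phi$, so $\Phi$ is open.) I do not expect a serious obstacle in any single step; the only places requiring care are the separately-continuous argument for well-definedness and the identification of the image of a product of cozero sets, the latter being where the product function $h=(f\circ\pi_1)(g\circ\pi_2)$ does the essential work.
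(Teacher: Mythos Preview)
Your proof is correct and follows essentially the same approach as the paper: the well-definedness and injectivity use the same slicing and projection arguments, and the openness is established via the identical computation that $\Phi(\mathrm{coz}(f^{\rho})\times\mathrm{coz}(g^{\rho}))=\mathrm{coz}(h^{\rho})$ for $h=(f\circ\pi_1)(g\circ\pi_2)$. Your parenthetical alternative via functoriality of the Tychonoff functor is a pleasant extra observation not in the paper, but the main argument is the same.
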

\begin{proof}
We first show that $(X / \approx ) \times (Y / \approx ) = (X \times Y ) / \approx$ as sets; specifically that $(x_1,y_1) \approx (x_2,y_2)$ if and only if $x_1 \approx x_2 $ and $y_1 \approx y_2$.  Indeed, if $(x_1,y_1) \approx (x_2,y_2)$ and $f \in C^b (X)$ then $f \circ \pi_X \in C^b (X \times Y)$, hence $f(x_1) = f \circ \pi_X (x_1,y_1) = f \circ \pi_X (x_2,y_2) = f(x_2)$, and $x_1 \approx x_2$.

On the other hand if both $x_1 \approx x_2$ and $y_1 \approx y_2$, take $g \in C^b (X \times Y)$; then $g(x_1,y_1)=g(x_2,y_1)=g(x_2,y_2)$.  It follows that the mapping $(\rho_X (x) , \rho_Y (y) ) \mapsto \rho_{X \times Y} (x,y)$ is a well-defined bijection.

In order to show that the above map is open we use the fact that in a completely regular space, the cozero sets of continuous functions form a base for the topology~\cite[3.4]{gill_jer}. Consider a basic open set $\mathrm{coz} (f^{\rho}) \times \mathrm{coz} (g^{\rho}) $ in $\rho X \times \rho Y$, where $f \in C^b( X),g \in C^b (  Y)$.  Then $h(x,y)=(f^{\rho} \circ \rho_X)(x)(g^{\rho} \circ \rho_Y)(y)$ defines an element of $C^b(X \times Y)$, and hence gives $h^{\rho} \in C^b ( \rho ( X \times  Y ) )$ with $h^{\rho} \circ \rho_{X \times Y} = h$.  Then $\mathrm{coz} (h^{\rho}) = \mathrm{coz} (f^{\rho}) \times \mathrm{coz} (g^{\rho})$, so that $\mathrm{coz} (f^{\rho}) \times \mathrm{coz} (g^{\rho})$ is an open subset of $\rho ( X \times Y)$.
\end{proof}

In view of Lemma~\ref{l:prodcr}, for any topological spaces $X$ and $Y$ we identify $\rho(X) \times \rho (Y)$ and $\rho( X \times Y)$ as sets.  This canonical map is not a homeomorphism in general, however (see Example~\ref{e:notlocallycompact}). Thus in what follows, we will denote by $\rho X \times \rho Y$ this product space with the (possibly weaker) product topology $\tau_p$, and by $\rho( X \times Y)$ the space with the topology $\tau_{cr}$ induced by the functions in $C^b (X \times Y)$. 

The following result was obtained originally in~\cite{hosh_mor}:
\begin{theorem}\emph{(Hoshina and Morita)}~\cite[Theorem 2.4]{mor_ish}

Let $X$ and $Y$ be topological spaces. The following are equivalent:
\begin{enumerate}
\item[(i)] $\rho X \times \rho Y = \rho ( X \times Y)$,
\item[(ii)] For any cozero set $G$ of $X \times Y$ and any point $(x,y) \in G$ there are cozero sets $U$ and $V$ of $X$ and $Y$ respectively with $(x,y) \in U \times V \subseteq G$.
\end{enumerate}
\end{theorem}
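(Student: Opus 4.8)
The plan is to exploit Lemma~\ref{l:prodcr}, which already supplies a canonical \emph{open} bijection $\Phi : \rho X \times \rho Y \to \rho(X \times Y)$ given by $(\rho_X(x), \rho_Y(y)) \mapsto \rho_{X \times Y}(x,y)$. Since $\Phi$ is an open bijection, the product topology $\tau_p$ is carried into $\tau_{cr}$ under the identification; equivalently, $\Phi^{-1}$ is automatically continuous, so that $\tau_p \subseteq \tau_{cr}$. Hence condition (i) --- that the two spaces coincide --- is equivalent to $\Phi$ itself being a homeomorphism, which, as $\Phi$ is already an open bijection, is equivalent to $\Phi$ being continuous. The whole theorem therefore reduces to extracting a concrete criterion for continuity of $\Phi$, and the aim is to show that this criterion is precisely (ii).

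To test continuity of $\Phi$ it suffices to check that $\Phi^{-1}(\mathrm{coz}(h^\rho))$ is $\tau_p$-open for every $h \in C^b(X \times Y)$, since the sets $\mathrm{coz}(h^\rho)$ form a base for $\tau_{cr}$ on $\rho(X \times Y)$. The central bookkeeping step is to track a single cozero set through the three spaces $X \times Y$, $\rho X \times \rho Y$ and $\rho(X \times Y)$. Writing $\pi = \rho_X \times \rho_Y$ for the (surjective) product of the quotient maps, one has $\Phi \circ \pi = \rho_{X \times Y}$ and $h = h^\rho \circ \rho_{X \times Y}$, whence a direct computation gives $\pi^{-1}\bigl(\Phi^{-1}(\mathrm{coz}(h^\rho))\bigr) = \mathrm{coz}(h)$. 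Likewise a basic $\tau_p$-open rectangle $\mathrm{coz}(f^\rho) \times \mathrm{coz}(g^\rho)$ (with $f \in C^b(X)$, $g \in C^b(Y)$) pulls back under $\pi$ to the product of cozero sets $\mathrm{coz}(f) \times \mathrm{coz}(g)$ of $X$ and $Y$.

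I would then use two elementary facts to pass between the spaces. First, cozero sets are saturated for the complete-regularisation quotient (as any $f \in C^b$ factors as $f^\rho \circ \rho$), so $x \in \mathrm{coz}(f) \Leftrightarrow \rho_X(x) \in \mathrm{coz}(f^\rho)$, and similarly in the other factor. Second, because $\pi$ is surjective, a containment of subsets of $\rho X \times \rho Y$ holds if and only if it holds after applying $\pi^{-1}$. Combining these, the rectangle $\mathrm{coz}(f^\rho) \times \mathrm{coz}(g^\rho)$ is contained in $\Phi^{-1}(\mathrm{coz}(h^\rho))$ exactly when $\mathrm{coz}(f) \times \mathrm{coz}(g) \subseteq \mathrm{coz}(h)$, while a point $(\rho_X(x), \rho_Y(y))$ lies in that rectangle exactly when $(x,y) \in \mathrm{coz}(f) \times \mathrm{coz}(g)$.

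Assembling these translations, $\Phi^{-1}(\mathrm{coz}(h^\rho))$ is $\tau_p$-open precisely when every point $(x,y) \in \mathrm{coz}(h)$ admits cozero sets $U = \mathrm{coz}(f)$ of $X$ and $V = \mathrm{coz}(g)$ of $Y$ with $(x,y) \in U \times V \subseteq \mathrm{coz}(h)$. Quantifying over all $h \in C^b(X \times Y)$ --- equivalently over all cozero sets $G = \mathrm{coz}(h)$ of $X \times Y$ --- this is verbatim condition (ii), giving the equivalence (i) $\Leftrightarrow$ (ii). I expect the only genuine obstacle to be notational rather than conceptual: one must be scrupulous about which of the three spaces each set inhabits, and in particular must invoke surjectivity of $\pi$ to lift the set-containment from $X \times Y$ back up to $\rho X \times \rho Y$, since without surjectivity only the trivial direction of that equivalence would survive.
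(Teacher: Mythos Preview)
The paper does not supply its own proof of this statement: it is quoted verbatim as a result of Hoshina and Morita, attributed to \cite[Theorem 2.4]{mor_ish}, and used as a black box. So there is no in-paper argument to compare against.

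Your argument is correct and self-contained. You reduce (i) to continuity of the open bijection $\Phi$ of Lemma~\ref{l:prodcr}, and then translate continuity at the level of basic cozero sets down to $X\times Y$ via the surjection $\pi=\rho_X\times\rho_Y$ and the identity $\Phi\circ\pi=\rho_{X\times Y}$. The two ``elementary facts'' you isolate are exactly what is needed: saturation of cozero sets lets you pass membership of points between $X\times Y$ and $\rho X\times\rho Y$, and surjectivity of $\pi$ lets you lift the containment $\mathrm{coz}(f)\times\mathrm{coz}(g)\subseteq\mathrm{coz}(h)$ back to $\mathrm{coz}(f^\rho)\times\mathrm{coz}(g^\rho)\subseteq\Phi^{-1}(\mathrm{coz}(h^\rho))$. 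The only minor remark is that in condition (ii) the cozero sets $U,V$ are arbitrary cozero sets rather than specifically $\mathrm{coz}(f)$ with $f\in C^b$, but the paper already notes that every cozero set arises from a bounded (indeed $[0,1]$-valued) function, so no generality is lost.
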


\begin{definition}
Let $(X,\mathcal{T})$ be a topological space.  For a subspace $Y \subseteq X$ denote by $\tau_Y$ the topology on $Y$ generated by $\{ \mathrm{coz} (f) : f \in C(Y) \}$. \end{definition}  \begin{comment}Note that $\tau_X \leq \mathcal{T}$, and that $U \subseteq X$ is $\tau_X$-open if and only if $U = \rho_X^{-1} (W)$ for an open subset $W \subseteq \rho X$.  Thus if $X$ is completely regular $\tau_X = \mathcal{T}$. \end{comment}

  \begin{comment}It is easily seen that
\[
\mathrm{cl}_{\tau_X} (U) = \rho_X^{-1} \left( \overline{ \rho_X (U)} \right).
\]

If $U \subseteq V \subseteq X$, then every $f \in C(V)$ has $f \restriction_U \in C(U)$.  Hence $\mathrm{coz} (f) \cap U = \mathrm{coz} (f \restriction_U )$ is a cozero set of $U$, so that the subspace topology $\tau_V \restriction_U$ is weaker than $\tau_U$ (and this inclusion can be strict). \end{comment}

For a topological space $(X, \mathcal{T})$ and a subset $U \subseteq X$ we denote by $\mathcal{T} \restriction_U$  the subspace topology on $U$ inherited from $\mathcal{T}$.  For two topologies $\mathcal{T}_1,\mathcal{T}_2$ on $X$ we write $\mathcal{T}_1 \leq \mathcal{T}_2$ to say that $\mathcal{T}_1$ is weaker than $\mathcal{T}_2$.

For any subset $U \subseteq X$, we denote by $\overline{U}$ the $\mathcal{T}$-closure of $U$ and by $\mathrm{cl}_{\tau_X} (U)$ the $\tau_X$-closure of $U$.
The following lemma establishes some basic properties of the $\tau$-topologies on subspaces of a topological space $(X, \mathcal{T})$:

\begin{lemma}
\label{l:tau}
Let $(X, \mathcal{T})$ be a topological space and let $U \subseteq X$. Then:
\begin{enumerate}
\item[(i)] $\tau_U \leq \mathcal{T} \restriction_U$,
\item[(ii)] $U$ is $\tau_X$ open if and only if $U = \rho_X^{-1} (W)$ for some open subset $W \subseteq \rho X$,
\item[(iii)] If $U$ is $\tau_X$-open then $U$ is saturated with respect to the relation $\approx$ on $X$ (hence $X \backslash U$ is saturated also),
\item[(iv)] $\mathrm{cl}_{\tau_X} (U) = \rho_X^{-1} \left( \overline{\rho_X (U)}\right)$, where $\overline{\rho_X (U)}$ is the $\tau_{cr}$-closure of $\rho_X (U)$ in $\rho X$,
\item[(v)] If $U \subseteq V \subseteq X$ then $\tau_V \restriction_U \leq \tau_U$.
\end{enumerate}
\end{lemma}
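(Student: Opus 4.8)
The plan is to make everything rest on the single identity $\mathrm{coz}(f) = \rho_X^{-1}(\mathrm{coz}(f^{\rho}))$ for $f \in C^b(X)$, which is immediate from the relation $f = f^{\rho} \circ \rho_X$ recorded in \S1. Before invoking it I would first dispose of (i) and (v), which are elementary and logically independent of the rest. For (i), any $f \in C(U)$ is by definition continuous for the subspace topology $\mathcal{T}\restriction_U$, so each generating cozero set $\mathrm{coz}(f)$ of $\tau_U$ is $\mathcal{T}\restriction_U$-open, giving $\tau_U \leq \mathcal{T}\restriction_U$. For (v), a subbasic $\tau_V\restriction_U$-open set has the form $\mathrm{coz}(f) \cap U$ with $f \in C(V)$; since $U \subseteq V$ carries the subspace topology, $f\restriction_U \in C(U)$ and $\mathrm{coz}(f) \cap U = \mathrm{coz}(f\restriction_U)$ is a basic $\tau_U$-open set, whence $\tau_V\restriction_U \leq \tau_U$.

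The substantive step is (ii), from which (iii) and (iv) will then follow quickly. Here one first observes that although $\tau_X$ is generated by cozero sets of the possibly unbounded complex functions in $C(X)$, each such $\mathrm{coz}(f)$ equals $\mathrm{coz}(\min(|f|,1))$ with $\min(|f|,1) \in C^b(X)$ taking values in $[0,1]$; hence $\tau_X$ is in fact generated by $\{\mathrm{coz}(f) : f \in C^b(X)\}$, precisely the functions that descend to $\rho X$. For one implication, if $U = \rho_X^{-1}(W)$ with $W$ open in $\rho X$, I would write $W$ as a union of basic cozero sets $\mathrm{coz}(f^{\rho})$; pulling back through $\rho_X$ and applying the key identity expresses $U$ as a union of cozero sets $\mathrm{coz}(f)$ in $X$, so $U$ is $\tau_X$-open. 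Conversely, if $U$ is $\tau_X$-open then $U = \bigcup_i \mathrm{coz}(f_i)$ with $f_i \in C^b(X)$, and the same identity gives $U = \rho_X^{-1}\big(\bigcup_i \mathrm{coz}(f_i^{\rho})\big)$, exhibiting the required open $W \subseteq \rho X$.

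Part (iii) is then immediate: by (ii) a $\tau_X$-open $U$ equals $\rho_X^{-1}(W)$, and preimages under the quotient map $\rho_X$ are unions of fibres, i.e.\ of $\approx$-classes, so $U$ (and likewise its complement $\rho_X^{-1}(\rho X \backslash W)$) is $\approx$-saturated. For (iv) I would use the dual form of (ii): the $\tau_X$-closed sets are exactly the sets $\rho_X^{-1}(C)$ with $C$ a $\tau_{cr}$-closed subset of $\rho X$. Since $\rho_X^{-1}(\overline{\rho_X(U)})$ is such a set and contains $U$, it contains $\mathrm{cl}_{\tau_X}(U)$; conversely, writing $\mathrm{cl}_{\tau_X}(U) = \rho_X^{-1}(C)$ with $C$ closed, the inclusion $U \subseteq \rho_X^{-1}(C)$ forces $\rho_X(U) \subseteq C$, hence $\overline{\rho_X(U)} \subseteq C$ and $\rho_X^{-1}(\overline{\rho_X(U)}) \subseteq \mathrm{cl}_{\tau_X}(U)$, giving equality. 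I expect the only point requiring genuine care to be the bounded-versus-unbounded and real-versus-complex comparison of cozero sets underlying (ii); once that is settled, the remaining arguments are purely formal manipulations with $\rho_X$.
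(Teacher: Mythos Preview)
Your proposal is correct and follows essentially the same approach as the paper: both rest on the identity $\mathrm{coz}(f)=\rho_X^{-1}(\mathrm{coz}(f^{\rho}))$ to establish (ii), and then derive (iii) and (iv) from it, with (i) and (v) handled elementarily via restriction of continuous functions. The only cosmetic differences are that the paper proves (iii) directly by exhibiting a separating function rather than quoting (ii), and in (iv) the paper invokes (iii) to write $F=\rho_X^{-1}(\rho_X(F))$ for an arbitrary $\tau_X$-closed $F\supseteq U$, whereas you apply the dual of (ii) directly to $\mathrm{cl}_{\tau_X}(U)$ itself; neither difference is substantive.
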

\begin{proof}
(i) Every basic $\tau_U$-open set is of the form $\{ x \in U : f(x) \neq 0 \}$ with $f:U \rightarrow [0,1]$ continuous, hence is open in $\mathcal{T} \restriction_U$.

(ii) Note that for every continuous $f: X \rightarrow [0,1]$, $\mathrm{coz}(f) = \rho_X^{-1} ( \mathrm{coz}(f^{\rho}) )$ by the construction of $f^{\rho}$.  Hence $U \subseteq X$ is $\tau_X$-open if  and only if there exist continuous functions $f_i : X \rightarrow [0,1]$ for all $i$ in some index set $I$ such that
\[
U = \bigcup_{i \in I } \mathrm{coz} (f_i ) = \bigcup_{i \in I} \rho_X^{-1} ( \mathrm{coz} (f_i^{\rho} ) = \rho_X^{-1} \left( \bigcup_{i \in I } \mathrm{coz} (f_i^{\rho} ) \right).
\] 
Since the $\tau_{cr}$-open subsets of $\rho X$ are unions of cozero sets, the conclusion follows.

(iii) Suppose $U$ is $\tau_X$-open and $x \in U$.  Then there is a cozero set neighbourhood $\mathrm{coz}(f)$ of $x$ contained in $U$, where $f: X \rightarrow [0,1]$ is continuous.  Thus for any $y \in X \backslash U$, $f(y)=0$ and $f(x) \neq 0$.  Hence $x \not\approx y$ for any such $y$, and so $[x] \subseteq U$.

(iv) By (ii) $\rho_X^{-1} \left( \overline{\rho_X (U)} \right)$ is $\tau_X$ closed.  Now suppose $F \subseteq X$ is $\tau_X$ closed and $U \subseteq F$.  Then $\rho_X (F)$ is closed in $\rho X$ by (ii), and contains $\rho_X (U)$, hence contains $\overline{\rho_X (U)}$.  By (iii), this gives $F = \rho_X^{-1} \left( \rho_X (F) \right) \supseteq \rho_X^{-1} \left( \overline{\rho_X (U)} \right)$, as required.

(v) If $U \subseteq V \subseteq X$, then every $f \in C(V)$ has $f \restriction_U \in C(U)$.  Hence $\mathrm{coz} (f) \cap U = \mathrm{coz} (f \restriction_U )$ is a cozero set of $U$, so that the subspace topology $\tau_V \restriction_U$ is weaker than $\tau_U$. 
\end{proof}
\begin{definition} 
A topological space $(X, \mathcal{T} )$ is said to be \emph{w-compact} if given any $\mathcal{T}$-open covering $\{ U_{\alpha} \}_{\alpha \in A}$ of $X$, then there exist $\alpha_1 , \ldots \alpha_n \in A$ such that $X = \mathrm{cl}_{\tau_X} ( U_{\alpha_1} \cup \ldots \cup U_{\alpha_n})$.  
\end{definition}
It is shown in~\cite[Proposition 3.3]{mor_ish} that $X$ is w-compact if and only if any family $\{ Q_{\alpha} \}$ of $\tau_X$-open subsets of $X$ with the finite intersection property has $ \bigcap \overline{ Q_{\alpha } } \neq \emptyset$.

The class of w-compact spaces was introduced by Ishii in~\cite{ishii} to characterise the topological spaces $X$ for which $\rho ( X \times Y) = \rho X \times \rho Y$, for any topological space $Y$:

\begin{theorem}~\cite[Theorem 4.1]{mor_ish}
\label{t:ishii}
For a topological space $X$ the following are equivalent:
\begin{enumerate}
\item[(i)]  $\rho ( X \times Y ) = \rho X \times \rho Y$  for any space $Y$,
\item[(ii)] For each  $x \in X$ there is a cozero set neighbourhood $U$ of $x$ such that $\overline{U}$ is w-compact.
\end{enumerate}
\end{theorem}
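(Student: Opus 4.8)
The plan is to run everything through the Hoshina--Morita theorem stated above, which reduces the global assertion (i) to a local, point-by-point rectangle condition, and then to translate that condition into the finite-intersection form of w-compactness recorded after the definition (\cite[Proposition 3.3]{mor_ish}). Concretely, by that theorem condition (i) holds for a \emph{given} space $Y$ precisely when every cozero set $G$ of $X \times Y$ and every point $(x,y) \in G$ admit cozero sets $U \subseteq X$ and $V \subseteq Y$ with $(x,y) \in U \times V \subseteq G$. So it suffices to prove that (ii) is equivalent to the validity of this rectangle condition for \emph{all} spaces $Y$ simultaneously; this is exactly the content that the quantifier ``for any $Y$'' is designed to capture, and it is what forces the particular notion of w-compactness rather than ordinary compactness.

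For the sufficiency (ii) $\Rightarrow$ (i), fix $Y$, a cozero set $G = \mathrm{coz}(f)$ with $f : X \times Y \to [0,1]$ continuous, and a point $(x_0,y_0) \in G$. Choose a cozero neighbourhood $U$ of $x_0$ whose closure $K = \overline{U}$ is w-compact, and argue by contradiction: if no cozero rectangle through $(x_0,y_0)$ lies in $G$, then for each cozero neighbourhood $V$ of $y_0$ the ``positivity region'' of $f$ over $V$ meets every cozero neighbourhood of $x_0$ inside $K$. I would encode these regions as a family $\{Q_V\}$ of $\tau_K$-open subsets of $K$, indexed by the cozero neighbourhoods $V$ of $y_0$, arranged so that the failure of the rectangle condition makes $\{Q_V\}$ have the finite intersection property while $\bigcap_V \mathrm{cl}_{\tau_K}(Q_V) = \emptyset$. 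This contradicts the finite-intersection characterisation of w-compactness of $K$, so the rectangle condition must hold and (i) follows.

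For the necessity (i) $\Rightarrow$ (ii) I would prove the contrapositive. If some $x_0 \in X$ has the property that \emph{every} cozero neighbourhood $U$ of $x_0$ has non-w-compact closure, then for each such $U$ the finite-intersection characterisation supplies a family of $\tau_{\overline{U}}$-open sets with the finite intersection property but empty intersection of $\tau$-closures. From this witnessing data I would manufacture a single test space $Y$---topologising the index set of such a family, with a point playing the role of $y_0$ together with points recording the ``escape'' directions---and a cozero set $G \subseteq X \times Y$ passing through $(x_0,y_0)$ that admits no cozero rectangle. By the Hoshina--Morita theorem this shows $\rho(X \times Y) \neq \rho X \times \rho Y$ for that particular $Y$, so (i) fails.

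The main obstacle lies in this last construction, and in its mirror image inside the sufficiency argument: because $Y$ is completely arbitrary there is no compactness available in the second coordinate, so the sets controlling the positivity of $f$ over a neighbourhood $V$ of $y_0$ cannot simply be taken as $\mathrm{coz}\bigl(x \mapsto \min_{y \in \overline{V}} f(x,y)\bigr)$, as one could if $\overline{V}$ were compact. Getting the families $\{Q_V\}$ to be honestly $\tau$-open using only the w-compactness of $\overline{U}$, and dually building a $Y$ whose cozero structure exactly reproduces a prescribed non-w-compact family, is the technical heart of the theorem; once the correct bookkeeping is in place, both implications follow from Lemma~\ref{l:prodcr}, the Hoshina--Morita theorem, and the finite-intersection form of w-compactness.
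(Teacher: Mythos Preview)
The paper does not prove this theorem at all: it is quoted verbatim as \cite[Theorem 4.1]{mor_ish} and used as a black box, so there is no ``paper's own proof'' to compare your attempt against. Your outline is therefore not competing with anything in the present paper; it is a sketch of how one might reconstruct the Ishii--Morita result from scratch.

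As a sketch, your strategy is the natural one---reduce via the Hoshina--Morita rectangle criterion and then translate into the finite-intersection form of w-compactness---but what you have written is a plan rather than a proof. In the sufficiency direction you never specify what the sets $Q_V$ actually are, nor why they are $\tau_K$-open, nor why the assumed failure of the rectangle condition forces $\bigcap_V \overline{Q_V} = \emptyset$; you yourself flag that the obvious candidate $\mathrm{coz}\bigl(x \mapsto \inf_{y \in V} f(x,y)\bigr)$ need not work. In the necessity direction the construction of the test space $Y$ is left entirely to the imagination (``I would manufacture a single test space\ldots''), and this is genuinely the hard part of the theorem. So while nothing you say is wrong, the proposal does not yet contain the ideas that do the real work; if you want a self-contained argument you will need to consult \cite{mor_ish} or \cite{ishii} and fill in those two constructions explicitly.
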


We will show in Proposition~\ref{p:wcpct} that condition (ii) of Theorem~\ref{t:ishii} is satisfied by $\mathrm{Prim}(A)$ for a large class of C$^{\ast}$-algebras $A$.  The following Lemma gives a sufficient condition for a point $x$ in a general topological space $X$ to have a cozero set neighbourhood with w-compact closure.

\begin{comment}
\begin{lemma}
\label{l:wcpct}
Let $(X,\mathcal{T})$ be a topological space and let $U$ be a cozero set in $X$ contained in a quasi-compact subset $K \subseteq X$.  Then $\bar{U} : = \mathrm{cl}_{\mathcal{T}} (U)$ is w-compact.
\end{lemma}
\begin{proof}
Let $\{ Q_{\alpha} \}$ be a collection of $\tau_{\bar{U}}$-open subsets of $\bar{U}$ with the finite intersection property.  Then for each $\alpha$, $Q_{\alpha} \cap U$ is a non-empty $\tau_U$-open subset of $U$, hence of $X$ (since any cozero set of $U$ is a cozero set of $X$ by~\cite[Lemma 3.9]{mor_ish}.)

Now the collection $\{ Q_{\alpha} \cap U \}$ is a collection of subsets of $K$ with the finite intersection property, hence $\bigcap \mathrm{cl}_K( Q_{\alpha} \cap U ) \neq \emptyset$.  It follows that $\bigcap \mathrm{cl}_K ( Q_{\alpha} ) \neq \emptyset$, and since $\mathrm{cl}_K (Q_{\alpha} ) \subseteq \mathrm{cl}_{\bar{U}} (Q_{\alpha}) \subseteq \bar{U}$, $\bigcap \mathrm{cl}_{\bar{U}} Q_{\alpha} \neq \emptyset$. \qed

\end{proof}
\end{comment}

\begin{lemma}
\label{l:wcpct}
Let $(X, \mathcal{T} )$ be a topological space and suppose that $\rho_X (x) \in \rho X$ has a compact neighbourhood $K$ such that there is a compact $C \subseteq X$ with $\rho_X (C ) = K$. Then $x$ has a cozero set neighbourhood $U$ in $X$ with $\overline{U}$ w-compact.
\end{lemma}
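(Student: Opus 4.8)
The plan is to build the required neighbourhood by pulling a cozero set back from $\rho X$, and then to check w-compactness of its closure through the finite-intersection characterisation stated after the definition, performing the decisive compactness step inside $C$ rather than inside $\rho X$.

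First I would construct $U$. Since $\rho X$ is completely regular its cozero sets form a base \cite[3.4]{gill_jer}, so as $K$ is a neighbourhood of $\rho_X(x)$ I may choose a cozero set $W$ of $\rho X$ with $\rho_X(x) \in W \subseteq K$ and set $U := \rho_X^{-1}(W)$. Then $U$ is a cozero set of $X$ (it equals $\mathrm{coz}(g \circ \rho_X)$ when $W = \mathrm{coz}(g)$) and an open neighbourhood of $x$. The role of the hypothesis $K = \rho_X(C)$ is the surjectivity identity $\rho_X(C \cap U) = W$: for $w \in W \subseteq K = \rho_X(C)$ choose $c \in C$ with $\rho_X(c) = w$; then $c \in \rho_X^{-1}(W) = U$, so $c \in C' := C \cap U$. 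This $C'$, sitting inside the compact set $C$, will replace $K$ in the usual argument.

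To verify that $\overline U$ is w-compact I would take a family $\{Q_\alpha\}$ of $\tau_{\overline U}$-open subsets of $\overline U$ with the finite intersection property. As $U$ is $\mathcal T$-dense in $\overline U$ it stays dense in the weaker topology $\tau_{\overline U}$, so every (nonempty) $Q_\alpha$ meets $U$ and $\{Q_\alpha \cap U\}$ inherits the finite intersection property. By Lemma~\ref{l:tau}(v) each $Q_\alpha \cap U$ is $\tau_U$-open, hence a union of cozero sets of $U$; since $U$ is itself a cozero set of $X$ these are cozero sets of $X$ \cite[Lemma 3.9]{mor_ish}, so $Q_\alpha \cap U$ is $\tau_X$-open and, by Lemma~\ref{l:tau}(ii), equals $\rho_X^{-1}(V_\alpha)$ for some open $V_\alpha \subseteq W$. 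The family $\{V_\alpha\}$ then has the finite intersection property, and lifting a point of each finite intersection $\bigcap_i V_{\alpha_i} \subseteq W = \rho_X(C')$ back into $C'$ shows that $\{Q_\alpha \cap C'\}$ also has the finite intersection property.

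These last sets lie in the compact space $C$, so $\bigcap_\alpha \mathrm{cl}_C(Q_\alpha \cap C') \neq \emptyset$; I pick $c_0$ in this intersection. Because $Q_\alpha \cap C' \subseteq U$, I get $\mathrm{cl}_C(Q_\alpha \cap C') \subseteq \mathrm{cl}_{\mathcal T}(U) = \overline U$, whence $c_0 \in \overline U$ and $c_0 \in \mathrm{cl}_{\mathcal T}(Q_\alpha) \subseteq \mathrm{cl}_{\tau_{\overline U}}(Q_\alpha)$ for every $\alpha$; this gives $\bigcap_\alpha \overline{Q_\alpha} \neq \emptyset$, as required for w-compactness. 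I expect the crux to be exactly this transfer step: running the finite-intersection argument in $\rho X$ with the compact $K$ would only locate an accumulation point in $\rho_X^{-1}(\overline W) = \mathrm{cl}_{\tau_X}(U)$ (Lemma~\ref{l:tau}(iv)), which may properly contain the $\mathcal T$-closure $\overline U$; it is by carrying out the argument inside the compact $C$, together with the identity $\rho_X(C \cap U) = W$, that the accumulation point is forced into $\overline U$ itself.
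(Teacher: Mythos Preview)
Your proof is correct and follows essentially the same route as the paper's: construct $U$ as the preimage of a cozero set $W\subseteq K$ in $\rho X$, show that intersecting the $\tau_U$-open sets $Q_\alpha\cap U$ with $C$ preserves the finite intersection property, and conclude via compactness of $C$. The only cosmetic difference is that the paper phrases the key step as ``$U\cap C$ is $\tau_U$-dense in $U$'' and works entirely inside $X$, whereas you record the equivalent fact $\rho_X(C\cap U)=W$ and pass explicitly through the open sets $V_\alpha\subseteq\rho X$ before lifting back; the content is the same.
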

\begin{proof}
Choose $f \in C ( \rho X )$ with $f ( \rho_X (x) ) = 1 $ and $f ( \rho X \backslash \mathrm{int} K ) = \{ 0 \}$.  Let $U = \rho_X^{-1} ( \mathrm{coz} (f) ) = \mathrm{coz} ( f \circ \rho_X )$, a cozero set neighbourhood of $x$ in $X$.  We claim that $U \cap C$ is $\tau_U$-dense in $U$.

Let $V$ be a cozero set of $U$, then $V$ is also a cozero set of $X$ by~\cite[Lemma 3.9]{mor_ish}. Choose $g \in C( \rho X)$ such that $V = \mathrm{coz} ( g \circ \rho_X )$. Note that for any $v \in \rho_X(V) = \mathrm{coz} (g)$ there is $y \in C$ such that $\rho_X (y)=v$, hence $g \circ \rho_X (y) = g(v) \neq 0$. So $V \cap C$ is non-empty, and since every $\tau_U$ open subset of $U$ is a union of cozero sets such as $V$, $U \cap C$ is $\tau_U$-dense.

If $Q \subseteq \overline{U}$ is a nonempty $\tau_{\overline{U}}$-open subset, then it is relatively open (in the subspace topology $\mathcal{T} \restriction_{\overline{U}}$) by Lemma~\ref{l:tau} (i).  In particular,  $Q \cap U$ is nonempty, and moreover is $\tau_U$ open by Lemma~\ref{l:tau} (v).

Take a collection $\{Q_{\alpha} \}$ of $\tau_{\overline{U}}$-open subsets of $\overline{U}$ with the finite intersection property. Then for every finite subcollection $\{ Q_{\alpha_j} \}_{j=1}^n$, the intersection $\bigcap_{j=1}^n (Q_{\alpha_j} \cap U) = \left( \bigcap_{j=1}^n Q_{\alpha_j} \right) \cap U$ is  nonempty. It follows that $\{ Q_{\alpha} \cap U \}$ is a collection of  $\tau_U$-open subsets of $U$ with the finite intersection property.  Since $U \cap C$ is $\tau_U$-dense,
\[ \bigcap_{j=1}^n ( Q_{\alpha_j} \cap U \cap C ) = \left ( \bigcap_{j=1}^n Q_{\alpha_j} \cap U \right) \cap C \]
is nonempty for every such subcollection.  Thus $\{ Q_{\alpha} \cap U \cap C \}$ is a collection of subsets of $C$ with the finite intersection property.  Since $C$ is compact, $\bigcap \overline{ ( Q_{\alpha} \cap U \cap C )} \cap C \neq \emptyset$.  As 
\[ \overline{( Q_{\alpha} \cap U \cap C ) } \cap C  \subseteq  \overline{Q_{\alpha}}\]
for each $\alpha$, this implies that $\bigcap \overline{Q_{\alpha}} \neq \emptyset $. Hence $\overline{U}$ is w-compact.
\end{proof}

\begin{lemma}
\label{l:wcpct2}
Let $X$ be a topological space and suppose that every $x \in X$ has a cozero set neighbourhood with w-compact closure.  Then $\rho X$ is locally compact.
\end{lemma}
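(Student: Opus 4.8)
The plan is to reduce local compactness of $\rho X$ to a single intermediate fact — that the complete regularisation of a w-compact space is compact — and then transport that compactness into $\rho X$ along the Tychonoff functor. Fix $x \in X$; since $\rho_X$ is surjective it suffices to produce a compact neighbourhood of $\rho_X(x)$. By hypothesis there is a cozero set $U$ with $x \in U$ and $\overline{U}$ w-compact. As $U$ is a cozero set it is $\tau_X$-open, so by Lemma~\ref{l:tau}(ii) the image $V := \rho_X(U)$ is open in $\rho X$ and contains $\rho_X(x)$; hence its $\tau_{cr}$-closure $\overline{V}$ is automatically a neighbourhood of $\rho_X(x)$, and the entire problem collapses to showing that $\overline{V}$ is compact.

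The key claim I would isolate and prove separately is: \emph{if $W$ is any w-compact space then $\rho W$ is compact}. To see this, take a $\tau_{cr}$-open cover $\{V_i\}$ of $\rho W$. Since $\rho W$ is completely regular I would first shrink it, using complete regularity, to a cozero cover $\{O_q\}$ with $q \in O_q$ and $\overline{O_q} \subseteq V_{i(q)}$ (pick $h \colon \rho W \to [0,1]$ continuous with $h(q)=1$ and $h \equiv 0$ off $V_{i(q)}$, and set $O_q = \{h > 1/2\}$). Then the sets $\rho_W^{-1}(O_q)$ are cozero sets, hence a $\mathcal{T}$-open cover of $W$, so the definition of w-compactness furnishes finitely many $q_1,\dots,q_n$ with $W = \mathrm{cl}_{\tau_W}\!\big(\bigcup_j \rho_W^{-1}(O_{q_j})\big)$. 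Applying Lemma~\ref{l:tau}(iv) together with surjectivity of $\rho_W$ forces $\bigcup_j O_{q_j}$ to be $\tau_{cr}$-dense in $\rho W$, and therefore $\rho W = \overline{\bigcup_j O_{q_j}} = \bigcup_j \overline{O_{q_j}} \subseteq \bigcup_j V_{i(q_j)}$ is a finite subcover. Thus $\rho W$ is compact.

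With the claim established I would apply it to $W = \overline{U}$ to conclude that $\rho(\overline U)$ is compact. The inclusion $\overline U \hookrightarrow X$ induces, by functoriality of $\rho$, a continuous map $\rho(\overline U) \to \rho X$ whose image is exactly $\rho_X(\overline U)$; being a continuous image of a compact space, $\rho_X(\overline U)$ is compact, hence closed since $\rho X$ is Hausdorff. Because $V = \rho_X(U) \subseteq \rho_X(\overline U)$ we obtain $\overline V \subseteq \rho_X(\overline U)$, so $\overline V$ is a closed subset of a compact set and is therefore compact. This exhibits $\overline V$ as the sought compact neighbourhood of $\rho_X(x)$, and since $x$ was arbitrary, $\rho X$ is locally compact.

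I expect the only genuinely delicate step to be the intermediate claim, and within it the passage from the \emph{$\tau_{cr}$-dense finite union} delivered by w-compactness to an honest finite subcover. This is precisely where complete regularity of $\rho W$ enters, through the preliminary shrinking to a cover with $\overline{O_q} \subseteq V_{i(q)}$, and it is the conceptual reason that mere w-compactness of closures in $X$ suffices to force local compactness of $\rho X$ even though $X$ itself need not be locally compact. The remaining ingredients — functoriality of $\rho$, Lemma~\ref{l:tau}, and the fact that a closed subset of a compact Hausdorff space is compact — are routine.
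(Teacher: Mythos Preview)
Your proof is correct and follows the same overall strategy as the paper: show that the image in $\rho X$ of the w-compact closure $\overline{U}$ is compact, and observe that it contains the open set $\rho_X(U)$. The paper dispatches the first step by citing two results from \cite{mor_ish}: Proposition~3.10 (the image $\rho_X(\overline{U})$ is again w-compact) and Proposition~3.4 (a completely regular w-compact space is compact), the latter applying because $\rho_X(\overline{U})$, as a subspace of the completely regular space $\rho X$, coincides with its own complete regularisation. You instead reprove the essential content of Proposition~3.4 directly (your intermediate claim that $\rho W$ is compact whenever $W$ is w-compact), and replace the invocation of Proposition~3.10 by the functoriality of the Tychonoff functor applied to the inclusion $\overline{U}\hookrightarrow X$, which transports compactness of $\rho(\overline{U})$ to its continuous image $\rho_X(\overline{U})$. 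Your route is therefore self-contained where the paper's is citation-based; the shrinking trick via complete regularity (choosing $O_q$ with $\overline{O_q}\subseteq V_{i(q)}$) is exactly the mechanism that converts the dense finite union delivered by w-compactness into an honest finite subcover, and your identification of this as the delicate point is accurate.
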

\begin{proof}
If $A \subseteq X$ is w-compact, then $\rho_X (A)$ is w-compact by~\cite[Proposition 3.10]{mor_ish}.  But then since $\rho_X (A)$ is completely regular it is homeomorphic to its complete regularisation $\rho \left( \rho_X (A) \right)$, hence is compact by~\cite[Proposition 3.4]{mor_ish}.

For each point $x \in X$, let $U_x$ be a cozero set neighbourhood of $x$ with $\overline{U_x}$ w-compact.  Then $\rho_X ( \overline{U_x} )$ is compact, and is a neighbourhood of $\rho_{X} (x)$ since $\rho_X (U_x)$ is open by Lemma~\ref{l:tau}(iv). 
\end{proof}

Now let $A$ be a C$^{\ast}$-algebra and $\mathrm{Prim}(A)$ the space of primitive ideals of $A$ with the hull-kernel topology.  We define $\mathrm{Glimm}(A)$ as the complete regularisation $\rho \mathrm{Prim}(A)$ of $\mathrm{Prim} (A)$, and denote by $\rho_A : \mathrm{Prim} (A) \rightarrow \mathrm{Glimm} (A)$ the complete regularisation map.

Let $p \in \mathrm{Glimm} (A)$ and choose $P \in \mathrm{Prim} (A)$ with $\rho_A (P)=p$.  We associate to the point $p$ the norm closed two sided ideal $G_p$ of $A$ given by
\[
G_p = \bigcap \{ Q \in \mathrm{Prim} (A) : Q \approx P \} = \bigcap \{ Q \in \mathrm{Prim} (A) : \rho_A (Q) = p \} = k \left( [P ] \right).
\]
 Note that since $[P]$ is closed in $\mathrm{Prim} (A)$ and $G_p = k ( [P])$, each equivalence class in $\mathrm{Prim} (A) / \approx$ is of the form
\[
[P] = \mathrm{hull}\left( k([P]) \right) = \mathrm{hull} (G_p),
\] 
by the definition of the hull-kernel topology.  

The collection $\{ G_p : p \in \mathrm{Glimm} (A) \}$ are known as the \emph{Glimm ideals} of $A$.  Since the assignment $p \mapsto G_p$ is injective, we will regard elements of $\mathrm{Glimm}(A)$ as either points of a topological space or as ideals of $A$, depending on the context. 
\begin{proposition}
\label{p:wcpct}
Let $A$ be a C$^{\ast}$-algebra such that one of the following conditions hold:
\begin{enumerate}
\item[(i)] $\mathrm{Prim} (A)$ is compact,
\item[(ii)] the complete regularisation map $\rho_A : \mathrm{Prim} (A) \rightarrow \mathrm{Glimm} (A)$ is open, or
\item[(iii)] $A$ is $\sigma$-unital and $\mathrm{Glimm} (A)$ is locally compact.
\end{enumerate}
Then every $P \in \mathrm{Prim} (A)$ has a cozero set neighbourhood with w-compact closure.  Hence for any C$^{\ast}$-algebra $B$, the complete regularisation $\rho( \mathrm{Prim}(A) \times \mathrm{Prim}(B) )$ of $\mathrm{Prim}(A) \times \mathrm{Prim}(B)$ is homeomorphic to the product space $\mathrm{Glimm}(A) \times \mathrm{Glimm}(B)$.

Conversely, if $\mathrm{Glimm} (A)$ is not locally compact then there is $P \in \mathrm{Prim} (A)$ that does not have a cozero set neighbourhood with w-compact closure.
\end{proposition}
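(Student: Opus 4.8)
The three hypotheses will each be shown to guarantee the condition of Theorem~\ref{t:ishii}(ii) for $X=\mathrm{Prim}(A)$, namely that every $P\in\mathrm{Prim}(A)$ has a cozero set neighbourhood with w-compact closure. Once this is established, the ``Hence'' assertion is immediate: Theorem~\ref{t:ishii} gives $\rho(\mathrm{Prim}(A)\times Y)=\mathrm{Glimm}(A)\times\rho Y$ for every space $Y$, and taking $Y=\mathrm{Prim}(B)$ (so that $\rho Y=\mathrm{Glimm}(B)$) shows that the canonical bijection of Lemma~\ref{l:prodcr} is a homeomorphism $\mathrm{Glimm}(A)\times\mathrm{Glimm}(B)\to\rho(\mathrm{Prim}(A)\times\mathrm{Prim}(B))$. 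The final, converse, statement is simply the contrapositive of Lemma~\ref{l:wcpct2} applied to $X=\mathrm{Prim}(A)$, for which $\rho X=\mathrm{Glimm}(A)$. Thus the whole proposition rests on Lemma~\ref{l:wcpct}: it suffices to produce, for each $P$, a compact set $C\subseteq\mathrm{Prim}(A)$ whose image $\rho_A(C)$ is a compact neighbourhood of $p:=\rho_A(P)$.

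Cases (i) and (ii) are direct. If $\mathrm{Prim}(A)$ is compact, take $C=\mathrm{Prim}(A)$; then $\rho_A(C)=\mathrm{Glimm}(A)$ is a compact neighbourhood of $p$, and Lemma~\ref{l:wcpct} applies at once. If $\rho_A$ is open, use the standard fact that $\mathrm{Prim}(A)$ is locally compact (Dixmier) to choose a compact neighbourhood $C$ of $P$; then $\rho_A(\mathrm{int}\,C)$ is open, contains $p$, and is contained in $\rho_A(C)$, so $\rho_A(C)$, being compact and contained in the Hausdorff space $\mathrm{Glimm}(A)$, is a compact neighbourhood of $p$, and Lemma~\ref{l:wcpct} applies with $K=\rho_A(C)$.

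Case (iii) is the substantial one. Since $A$ is $\sigma$-unital, fix a strictly positive $a\in A_+$ with $\|a\|\le1$ and write $N_a(P)=\|a+P\|$; this is lower semicontinuous on $\mathrm{Prim}(A)$, each superlevel set $\{P:N_a(P)\ge\epsilon\}$ is compact, and $N_a(P)>0$ for every $P$, so these sets increase to $\mathrm{Prim}(A)$. In particular $\mathrm{Prim}(A)$, and hence $\mathrm{Glimm}(A)$, is $\sigma$-compact. Given $P$, I would use local compactness of $\mathrm{Glimm}(A)$ to choose a compact neighbourhood $K$ of $p$ and, by complete regularity, a continuous $f:\mathrm{Glimm}(A)\to[0,1]$ with $f(p)=1$ and $\mathrm{coz}(f)\subseteq\mathrm{int}\,K$. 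Via the Dauns--Hofmann identification $C^b(\mathrm{Glimm}(A))\cong ZM(A)$, let $z\in ZM(A)_+$ be the central multiplier acting as the scalar $f(\rho_A(P))$ on each quotient $A/P$, and put $c=z^{1/2}az^{1/2}\in A_+$, so that $\|c+P\|=f(\rho_A(P))\,N_a(P)$. The compact set $C=\{P:\|c+P\|\ge\epsilon\}$ then satisfies $\rho_A(C)\subseteq\mathrm{coz}(f)\subseteq K$, and the plan is to show that for sufficiently small $\epsilon>0$ the set $\rho_A(C)$ contains an open neighbourhood of $p$; Lemma~\ref{l:wcpct} would then finish the case.

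The verification of this last claim is where I expect the real difficulty to lie, and it is exactly the point at which local compactness of $\mathrm{Glimm}(A)$ must be used in an essential way. Because $\rho_A$ is not open, the image of a compact set need not be a neighbourhood: writing $\tilde a(q)=\|a+G_q\|=\sup\{N_a(P):\rho_A(P)=q\}$, one has $\{\tilde a>\epsilon\}\subseteq\rho_A(\{N_a\ge\epsilon\})\subseteq\{\tilde a\ge\epsilon\}$, but $\tilde a$ is only \emph{upper} semicontinuous on $\mathrm{Glimm}(A)$ (the Dauns--Hofmann bundle is upper, not lower, semicontinuous), so $\{\tilde a>\epsilon\}$ need not be open and $\rho_A(C)$ can a priori fail to contain an open neighbourhood of $p$. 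One must therefore patch the compact exhaustion $\{N_a\ge1/n\}$ of $\mathrm{Prim}(A)$ against the compact neighbourhood $K$ --- shrinking $K$ inside $\mathrm{Glimm}(A)$ as necessary --- so as to arrange that a single compact piece of $\rho_A^{-1}(K)$ already covers a neighbourhood of $p$. The hypotheses in (i) and (ii) sidestep this by making the lift automatic; the content of (iii) is that, for $\sigma$-unital $A$, local compactness of $\mathrm{Glimm}(A)$ is precisely what makes the patching succeed. That it cannot be dropped is exactly the converse statement, namely the contrapositive of Lemma~\ref{l:wcpct2}.
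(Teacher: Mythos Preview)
Your treatment of case~(i), the ``Hence'' clause, and the converse match the paper's proof exactly. Your argument for case~(ii) is actually more direct than the paper's: the paper treats (ii) and (iii) together via an external citation, whereas you use openness of $\rho_A$ and local compactness of $\mathrm{Prim}(A)$ to push a compact neighbourhood of $P$ forward to a compact neighbourhood of $p$. That is correct and self-contained.

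The gap is in case~(iii), and you have identified it yourself: you set up the element $c$ and the compact set $C=\{P:\|c+P\|\ge\epsilon\}$, but you do not establish that $\rho_A(C)$ is a neighbourhood of $p$, and your discussion of upper semicontinuity explains precisely why this does not follow from what you have written. The ``patching'' you allude to is not carried out, so as it stands case~(iii) is unproved.

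The paper closes this gap not by an ad hoc patching argument but by invoking a result of Lazar (\emph{Compact subsets of the Glimm space of a $C^*$-algebra}): under either hypothesis~(ii) or~(iii), every compact subset $K'$ of $\mathrm{Glimm}(A)$ is contained in a set of the form
\[
K=\{G\in\mathrm{Glimm}(A):\|a+G\|\ge\alpha\}=\rho_A\bigl(\{P\in\mathrm{Prim}(A):\|a+P\|\ge\alpha\}\bigr)
\]
for some $a\in A$ and $\alpha>0$, and the set $\{P:\|a+P\|\ge\alpha\}$ is compact in $\mathrm{Prim}(A)$. Thus one starts with any compact neighbourhood $K'$ of $p$ (available by local compactness), enlarges it to such a $K$, and takes $C=\{P:\|a+P\|\ge\alpha\}$; then $\rho_A(C)=K\supseteq K'$ is a compact neighbourhood of $p$ and Lemma~\ref{l:wcpct} applies. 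The point is that Lazar's theorem guarantees the compact lift exists globally, bypassing the semicontinuity obstruction you ran into; this is the missing ingredient in your argument.
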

\begin{proof}
Note that if (i) holds then $\mathrm{Glimm}(A)$, being the continuous image of the compact space $\mathrm{Prim}(A)$, is compact. The proposition is then immediate by Lemma~\ref{l:wcpct} with $C= \mathrm{Prim} (A)$ and $K = \mathrm{Glimm} (A)$.

In cases (ii) and (iii), take $P \in \mathrm{Prim} (A)$ with $\rho_A (P) = x$ and let $K'$ be a compact neighbourhood of $x$ in $\mathrm{Glimm} (A)$. By~\cite[Theorem 2.1 and Proposition 2.5]{lazar_glimm}, $K'$ is contained in a compact subset of  $\mathrm{Glimm} (A)$ of the form
\[
K:= \{ G \in \mathrm{Glimm} (A) : \| a + G \| \geq \alpha \} = \rho_A ( \{ P \in \mathrm{Prim} (A) : \| a+P \| \geq \alpha \} ),
\]
for some $a \in A$ and $\alpha > 0 $, and the set $ \{ P \in \mathrm{Prim} (A) : \| a + P \| \geq \alpha \}$ is compact by~\cite[Proposition 3.3.7]{dix}. Then $K$ is a compact neighbourhood of $x$, and the conclusion thus follows from Lemma~\ref{l:wcpct}.

It then follows from Theorem~\ref{t:ishii} that if any of the conditions (i) to (iii) hold, $\rho ( \mathrm{Prim}(A) \times Y ) = \rho( \mathrm{Prim}(A) ) \times \rho (Y)$ for any space $Y$.  In particular, if $B$ is a C$^{\ast}$-algebra then we have
\[
\rho \left( \mathrm{Prim}(A) \times \mathrm{Prim} (B) \right) = \rho \left( \mathrm{Prim}(A) \right) \times \rho \left( \mathrm{Prim}(B) \right) = \mathrm{Glimm}(A) \times \mathrm{Glimm}(B).
\]

On the other hand if $\mathrm{Glimm}(A)$ is not locally compact, then by Lemma~\ref{l:wcpct2} there is $P \in \mathrm{Prim}(A)$ for which no cozero set neighbourhood of $P$ has w-compact closure.

\end{proof}

\begin{remark}
Suppose that $A$ is a C$^{\ast}$-algebra such that $\mathrm{Prim}(A)$ does not satisfy condition (ii) of Theorem~\ref{t:ishii}.  Then there is a topological space $Y$ for which $\rho ( \mathrm{Prim}(A) \times Y ) \neq \mathrm{Glimm}(A) \times \rho(Y)$.  It is not immediately evident whether this space $Y$ can be chosen as $\mathrm{Prim} (B)$ for some C$^{\ast}$-algebra $B$.  Thus the partial converse in Proposition~\ref{p:wcpct} does not preclude the possibility that $\rho \left( \mathrm{Prim}(A) \times \mathrm{Prim}(B) \right) = \mathrm{Glimm}(A) \times \mathrm{Glimm}(B)$ for all C$^{\ast}$-algebras $A$ and $B$.

We will show in Example~\ref{e:notlocallycompact} however that $\rho(X \times Y ) \neq \rho (X) \times \rho (Y)$ is indeed possible when $X$ and $Y$ are primitive ideal spaces of C$^{\ast}$-algebras.  Specifically, we construct  a C$^{\ast}$-algebra $A$ for which $\rho ( \mathrm{Prim}(A) \times \mathrm{Prim}(A) ) \neq \mathrm{Glimm}(A) \times \mathrm{Glimm}(A)$
\end{remark}
\begin{remark}
\label{r:tauq}
Another natural topology on the complete regularisation $\rho X$ of a space $X$ is the quotient topology $\tau_q$ induced by the complete regularisation map $\rho_X$; that is, the strongest topology on $\rho X$ for which $\rho_X$ is continuous.  Since $\rho_X$ is continuous as a map into $(\rho X, \tau_{cr} )$, it always holds that $\tau_{cr} \leq \tau_q$.  However, there is an example due to D.W.B. Somerset of a space $X$ for which $\tau_{cr} \neq \tau_q$ on $\rho X$, and a C$^{\ast}$-algebra $A$ with $\mathrm{Prim} (A)$ homeomorphic to $X$~\cite[Appendix]{lazar_quot}. 

In the case of the primitive ideal space of a C$^{\ast}$-algebra $A$, there are many conditions known to ensure that $\tau_{cr} = \tau_q$ on $\mathrm{Glimm}(A)$.  A.J. Lazar has shown in~\cite[Theorem 2.6]{lazar_quot} that if $X$ is locally compact and $\sigma$-compact (that is, $X$ is a countable union of compact subsets) then $\tau_{cr} = \tau_q$ on $\rho X$.  In particular this holds for the space $\mathrm{Prim}(A)$ whenever $A$ is a $\sigma$-unital C$^{\ast}$-algebra, or when $\mathrm{Prim}(A)$ is compact.  The two topologies also coincide if $\rho_A$ is either  $\tau_{cr}$ or $\tau_q$-open~\cite[p. 351]{arch_som_qs}.

One particular consequence of these results is that if $A$ is a C$^{\ast}$-algebra satisfying one of the conditions (i) to (iii) of Proposition~\ref{p:wcpct}, then necessarily $\tau_{cr} = \tau_q$ on $\mathrm{Glimm}(A)$.

The topology $\tau_{cr}$ is in some sense the more natural topology on $\rho X$, since it is by definition the unique topology for which $\rho_X^{\ast} : C^b (\rho X ) \rightarrow C^b (X)$ is a $\ast$-isomorphism.  This allows us to apply the results  of Ishii from~\cite{mor_ish} on the complete regularisation of a product space. Moreover, in the case of the primitive ideal space of a C$^{\ast}$-algebra $A$, we will apply the Dauns-Hofmann identification (see \S\ref{s:zma}) of $ZM(A)$ with $C^b ( \mathrm{Glimm} (A), \tau_{cr}  )$ to determine $ZM(A \otimes_{\alpha} B )$ in terms of continuous functions on the Glimm spaces of $A$ and $B$.

\end{remark}

\section{The Glimm space of the minimal tensor product of C$^{\ast}$-algebras}

In this section we show that, as a topological space $\mathrm{Glimm}(A \otimes_{\alpha} B )$ can be naturally identified with $\mathrm{Glimm}(A) \times \mathrm{Glimm} (B)$, when the latter space is considered as the complete regularisation of $\mathrm{Prim}(A) \times \mathrm{Prim} (B)$.  We first discuss the canonical embedding of $\mathrm{Prim}(A) \times \mathrm{Prim} (B)$ in $\mathrm{Prim}(A \otimes_{\alpha} B )$.

Let $\pi: A \rightarrow A'$ and $\sigma : B \rightarrow B'$ be $\ast$-homomorphisms of C$^{\ast}$-algebras.  Then there is a unique $\ast$-homomorphism $\pi \otimes \sigma : A \otimes_{\alpha} B \rightarrow A' \otimes_{\alpha} B'$, such that $(\pi \otimes \sigma) ( a \otimes b) = \pi (a) \otimes \sigma (b)$ for all elementary tensors $a \otimes b \in A \otimes B$.  In particular let $(I,J) \in \mathrm{Id} ' (A) \times \mathrm{Id} ' (B)$ and denote by $q_I : A \rightarrow A / I, q_J: B \rightarrow B/J$ the quotient homomorphisms.  Then we have a $\ast$-homomorphism $q_I \otimes q_J : A \otimes_{\alpha} B \rightarrow (A/I) \otimes_{\alpha} (B / J)$.

We now define two natural  maps $\Phi, \Delta : \mathrm{Id}' (A) \times \mathrm{Id}' (B) \rightarrow \mathrm{Id}' ( A \otimes_{\alpha} B)$ via
\begin{eqnarray}
\label{e:phi} \Phi (I,J) &=& \mathrm{ker} ( q_I \otimes q_J ) \\
\label{e:delta} \Delta (I,J) &=& I \otimes_{\alpha} B + A \otimes_{\alpha} J.
\end{eqnarray}

 The following Proposition lists some known properties of the map $\Phi$.

\begin{comment}
Moreover, the restrictions of $\Phi$ to $\mathrm{Prim}(A) \times \mathrm{Prim} (B)$ and $\mathrm{Fac} (A) \times \mathrm{Fac} (B)$ are homeomorphisms onto dense subsets of $\mathrm{Prim} ( A \otimes_{\alpha} B )$ and $\mathrm{Fac} (A \otimes_{\alpha} B)$ respectively~\cite[lemme 16]{wulfsohn},~\cite[Corollary 2.7]{lazar_tensor}.
\end{comment}

\begin{proposition}
\label{p:phi}
Let $A$ and $B$ be C$^{\ast}$-algebras and $A \otimes_{\alpha} B$ their minimal C$^{\ast}$-tensor product.  Then the map $\Phi$ defined by (\ref{e:phi}) has the following properties:
\begin{enumerate}
\item[(i)]  If $I,K \in \mathrm{Id}' (A)$ and $J,L \in \mathrm{Id}' (B)$ are such that $I \supseteq K$ and $J \supseteq L$ then $\Phi (I,J) \supseteq \Phi (K,L)$ \emph{~\cite[Lemma 2.2]{lazar_tensor}},
\item[(ii)] The restriction of $\Phi$ to $\mathrm{Prim}(A) \times \mathrm{Prim} (B)$ is a homeomorphism onto its image which is dense in $\mathrm{Prim}(A \otimes_{\alpha} B )$\emph{~\cite[lemme 16]{wulfsohn}},
\item[(iii)] The restriction of $\Phi$ to $\mathrm{Fac}(A) \times \mathrm{Fac} (B)$ is a homeomorphism onto its image which is dense in $\mathrm{Fac} (A \otimes_{\alpha} B )$\emph{~\cite[Corollary 2.7]{lazar_tensor}},
\item[(iv)] For $I,J \in \mathrm{Id}'(A) \times \mathrm{Id}' (B)$, $\Phi \left( \mathrm{hull} (I) \times \mathrm{hull} (J) \right)$ is dense in $\mathrm{hull}( \Phi(I,J) )$\emph{~\cite[Corollary 2.3]{lazar_tensor}},
\item[(v)] For $I,J \in \mathrm{Id}'(A) \times \mathrm{Id}' (B)$, we have
\[
\Phi (I,J) = \bigcap \{ \Phi (P,Q): (P,Q) \in \mathrm{hull}(I) \times \mathrm{hull}(J) \}
\]
\emph{\cite[Remark 2.4]{lazar_tensor}}
\end{enumerate}
\end{proposition}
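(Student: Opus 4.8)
The plan is to order the five assertions by logical dependency: (i) is purely formal; (ii) and (iii) carry the representation-theoretic content and are where the real work lies; and (iv), (v) are then deduced from (ii) together with the point-set topology of the hull-kernel topology. For (i) I would use the universal tensor map $\pi\otimes\sigma$ recalled just above the Proposition. Since $I\supseteq K$ and $J\supseteq L$ there are canonical quotient maps $r\colon A/K\to A/I$ and $s\colon B/L\to B/J$ with $q_I=r\circ q_K$ and $q_J=s\circ q_L$; functoriality then gives $q_I\otimes q_J=(r\otimes s)\circ(q_K\otimes q_L)$, and passing to kernels yields $\Phi(K,L)\subseteq\Phi(I,J)$, which is the stated monotonicity.

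For (ii), and \emph{mutatis mutandis} (iii), the key algebraic fact is that if $\pi,\sigma$ are irreducible (resp.\ factor) representations of $A,B$, then $\pi\otimes\sigma$ is an irreducible (resp.\ factor) representation of $A\otimes_\alpha B$, because $(\pi\otimes\sigma)(A\otimes_\alpha B)''=\pi(A)''\,\bar\otimes\,\sigma(B)''$ and, by the commutation theorem, the spatial tensor product of factors is again a factor. Choosing $\pi,\sigma$ with kernels $P,Q$ and factoring them through faithful representations of $A/P$ and $B/Q$, the faithfulness of the minimal tensor product of faithful representations identifies $\ker(\pi\otimes\sigma)$ with $\Phi(P,Q)$, so $\Phi$ does land in $\mathrm{Prim}(A\otimes_\alpha B)$ (resp.\ $\mathrm{Fac}$). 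Injectivity and the homeomorphism-onto-image statement I would recover via slice maps, which allow $P$ and $Q$ to be read off from $\Phi(P,Q)$, with continuity and openness checked on the basic open sets of the hull-kernel topology. The density assertion is the main obstacle; I would prove it by showing $k\big(\Phi(\mathrm{Prim}(A)\times\mathrm{Prim}(B))\big)=\bigcap_{P,Q}\ker(q_P\otimes q_Q)=\{0\}$, which reduces to the fact that the minimal norm is recovered from tensor products of irreducible representations: decomposing a faithful atomic representation of each factor into irreducibles gives $\|x\|_\alpha=\sup_{\pi,\sigma\ \mathrm{irred}}\|(\pi\otimes\sigma)(x)\|$, so no nonzero $x$ can lie in every $\ker(\pi\otimes\sigma)$. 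A subset of a primitive ideal space with kernel $\{0\}$ is dense, giving (ii); part (iii) is analogous, using that irreducible representations are in particular factorial.

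Finally, for (v) the inclusion $\Phi(I,J)\subseteq\bigcap\{\Phi(P,Q):(P,Q)\in\mathrm{hull}(I)\times\mathrm{hull}(J)\}$ is immediate from (i). For the reverse inclusion, suppose $(q_P\otimes q_Q)(x)=0$ for all such $(P,Q)$ and set $y=(q_I\otimes q_J)(x)\in(A/I)\otimes_\alpha(B/J)$. Writing $q_P=q_{P/I}\circ q_I$ and $q_Q=q_{Q/J}\circ q_J$, and noting that $\{P/I\}$ and $\{Q/J\}$ exhaust $\mathrm{Prim}(A/I)$ and $\mathrm{Prim}(B/J)$, the element $y$ lies in every $\ker(q_{P'}\otimes q_{Q'})$ with $P'\in\mathrm{Prim}(A/I),\,Q'\in\mathrm{Prim}(B/J)$; by the density in (ii) applied to $A/I,B/J$ these ideals have kernel $\{0\}$, so $y=0$ and hence $x\in\Phi(I,J)$. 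Statement (iv) is then formal: in $\mathrm{Prim}$ the closure of a set $S$ equals $\mathrm{hull}(k(S))$, and by (v) we have $k\big(\Phi(\mathrm{hull}(I)\times\mathrm{hull}(J))\big)=\bigcap_{P,Q}\Phi(P,Q)=\Phi(I,J)$, whence the closure of $\Phi(\mathrm{hull}(I)\times\mathrm{hull}(J))$ is exactly $\mathrm{hull}(\Phi(I,J))$.
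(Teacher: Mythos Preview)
Your argument is correct, but you should be aware that the paper offers no proof of this proposition at all: it is stated as a compendium of known facts, each item carrying a citation to the literature (Wulfsohn for (ii), Lazar for (i), (iii), (iv), (v)). So there is nothing to compare against beyond those references.

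That said, your self-contained outline is sound and follows the standard route. A few remarks. In (ii)/(iii) you correctly identify the commutation theorem as the reason $\pi\otimes\sigma$ is irreducible (resp.\ factorial), and your density argument via $\bigcap_{P,Q}\Phi(P,Q)=\{0\}$ is the usual one; the homeomorphism-onto-image claim is where you are most telegraphic, but the slice-map approach you indicate does work and indeed underlies the map $\Psi$ introduced later in the paper (see equation~(\ref{e:psi}) and Proposition~\ref{p:psi}(i)). Your derivation of (v) from (ii) applied to $A/I$ and $B/J$ is clean, and deducing (iv) from (v) via $\overline{S}=\mathrm{hull}(k(S))$ is perfectly valid, though in Lazar's paper the logical order is reversed ((iv) is proved first and (v) is a remark following it). Either direction works.
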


Theorem~\ref{t:homeo} below identifies the complete regularisation of $\mathrm{Prim}(A) \times \mathrm{Prim} (B)$ with that of $\mathrm{Prim}(A \otimes_{\alpha} B)$.  As discussed in the remarks proceeding Lemma~\ref{l:prodcr}, we need to take into account the appropriate topology on the former space.  Thus we will refer to  $\rho \left( \mathrm{Prim}(A) \times \mathrm{Prim}(B) \right)$ as $\left( \mathrm{Glimm}(A) \times \mathrm{Glimm}(B), \tau_{cr} \right)$, and  $\rho \left( \mathrm{Prim} (A) \right) \times \rho \left( \mathrm{Prim} (B) \right)$ as $\left( \mathrm{Glimm}(A) \times \mathrm{Glimm} (B) , \tau_p \right)$
\begin{theorem}
\label{t:homeo}
Let $A$ and $B$ be C$^{\ast}$-algebras, $A \otimes_{\alpha} B$ their minimal C$^{\ast}$-tensor product and denote by $\rho_A,\rho_B$ and $\rho_{\alpha}$ the complete regularisation maps of $\mathrm{Prim}(A),\mathrm{Prim}(B)$ and $\mathrm{Prim} (A \otimes_{\alpha}B )$ respectively.  Then there is a homeomorphism $\psi : \mathrm{Glimm} ( A \otimes_{\alpha} B ) \rightarrow ( \mathrm{Glimm} (A) \times \mathrm{Glimm} (B), \tau_{cr} )$ given by
\[
(\psi \circ \rho_{\alpha}) \left( \Phi (P,Q) \right) = \left( \rho_A (P), \rho_B (Q) \right).
\]
It follows that $\psi$ defines a continuous bijection $\mathrm{Glimm} ( A \otimes_{\alpha} B ) \rightarrow ( \mathrm{Glimm} (A) \times \mathrm{Glimm} (B), \tau_p )$.
\end{theorem}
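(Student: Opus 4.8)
The plan is to construct $\psi$ and its inverse separately and then check that the two composites are the identity by a density argument. The forward map into the product will come from the universal property of the Tychonoff functor applied to an extension furnished by Lazar's theorem, while the reverse map will come from functoriality of $\rho$ applied to the embedding $\Phi$ of Proposition~\ref{p:phi}(ii). The essential difficulty is that there is no natural continuous map $\mathrm{Prim}(A\otimes_\alpha B)\to\mathrm{Prim}(A)\times\mathrm{Prim}(B)$ available to build the backward direction; the content of the argument is therefore concentrated in producing that map, and this is exactly where Lazar's extension result does the work.

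First I would construct $\psi$. By Proposition~\ref{p:phi}(ii) the map $\Phi$ identifies $\mathrm{Prim}(A)\times\mathrm{Prim}(B)$ homeomorphically with a dense subspace of $\mathrm{Prim}(A\otimes_\alpha B)$. Consider the complete regularisation map of $\mathrm{Prim}(A)\times\mathrm{Prim}(B)$, which under the set identification of Lemma~\ref{l:prodcr} sends $(P,Q)\mapsto(\rho_A(P),\rho_B(Q))$ into the completely regular (hence $T_1$) space $(\mathrm{Glimm}(A)\times\mathrm{Glimm}(B),\tau_{cr})$. Regarding this as a continuous map defined on the dense image $\Phi(\mathrm{Prim}(A)\times\mathrm{Prim}(B))$, Lazar's extension theorem~\cite{lazar_tensor} yields a continuous map $\widetilde\psi:\mathrm{Prim}(A\otimes_\alpha B)\to(\mathrm{Glimm}(A)\times\mathrm{Glimm}(B),\tau_{cr})$ with $\widetilde\psi(\Phi(P,Q))=(\rho_A(P),\rho_B(Q))$. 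Since the target is completely regular, $\widetilde\psi$ is constant on $\approx$-classes (distinct values would be separated by a bounded continuous function, whose composition with $\widetilde\psi$ would separate $\approx$-equivalent points), so by the universal property of the complete regularisation it factors uniquely through $\rho_\alpha$, producing a continuous $\psi:\mathrm{Glimm}(A\otimes_\alpha B)\to(\mathrm{Glimm}(A)\times\mathrm{Glimm}(B),\tau_{cr})$ with $\psi\circ\rho_\alpha=\widetilde\psi$, which is exactly the stated formula.

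Next I would build the candidate inverse from functoriality. The embedding $\Phi:\mathrm{Prim}(A)\times\mathrm{Prim}(B)\to\mathrm{Prim}(A\otimes_\alpha B)$ is continuous, so the Tychonoff functor supplies a continuous map $\Phi^{\rho}:(\mathrm{Glimm}(A)\times\mathrm{Glimm}(B),\tau_{cr})\to\mathrm{Glimm}(A\otimes_\alpha B)$ satisfying $\Phi^{\rho}(\rho_A(P),\rho_B(Q))=\rho_\alpha(\Phi(P,Q))$. Computing on generators, $\psi\circ\Phi^{\rho}$ fixes every $(\rho_A(P),\rho_B(Q))$, and since $\rho_A$ and $\rho_B$ are surjective these exhaust the product space, so $\psi\circ\Phi^{\rho}=\mathrm{id}$. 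Conversely $\Phi^{\rho}\circ\psi$ fixes every point of $\rho_\alpha(\Phi(\mathrm{Prim}(A)\times\mathrm{Prim}(B)))$; because $\Phi(\mathrm{Prim}(A)\times\mathrm{Prim}(B))$ is dense in $\mathrm{Prim}(A\otimes_\alpha B)$ and $\rho_\alpha$ is continuous and surjective, this set is dense in $\mathrm{Glimm}(A\otimes_\alpha B)$. As $\Phi^{\rho}\circ\psi$ and the identity are continuous maps into the Hausdorff space $\mathrm{Glimm}(A\otimes_\alpha B)$ that agree on a dense set, they coincide. Hence $\psi$ and $\Phi^{\rho}$ are mutually inverse continuous bijections, so $\psi$ is a homeomorphism.

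Finally, for the concluding assertion I would invoke Lemma~\ref{l:prodcr}: the bijection there is open, which says precisely that $\tau_p\le\tau_{cr}$, so the identity map $(\mathrm{Glimm}(A)\times\mathrm{Glimm}(B),\tau_{cr})\to(\mathrm{Glimm}(A)\times\mathrm{Glimm}(B),\tau_p)$ is continuous; composing it with $\psi$ gives the desired continuous bijection onto the product space with the weaker topology $\tau_p$. I expect the steps requiring the most care to be verifying that Lazar's extension genuinely lands in $(\mathrm{Glimm}(A)\times\mathrm{Glimm}(B),\tau_{cr})$ and that the universal property applies to this possibly non-metrizable completely regular target; once both maps are in hand, the inversion is a routine density argument.
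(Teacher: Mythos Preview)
Your construction of $\psi$ via Lazar's extension theorem and the factorisation through $\rho_\alpha$ is exactly what the paper does. Where you diverge is in proving that $\psi$ is a homeomorphism: you build an explicit continuous inverse $\Phi^\rho$ by applying the Tychonoff functor to the embedding $\Phi$, then verify the two composites are the identity using surjectivity of $\rho_A\times\rho_B$ in one direction and a density argument in the Hausdorff target in the other. The paper instead shows that the induced map $\psi^*:C^b(\mathrm{Glimm}(A)\times\mathrm{Glimm}(B),\tau_{cr})\to C^b(\mathrm{Glimm}(A\otimes_\alpha B))$ is surjective, and then invokes~\cite[Theorem 10.3(b)]{gill_jer} to conclude that $\psi$ is a homeomorphism; the surjectivity of $\psi^*$ is obtained by taking $f\in C^b(\mathrm{Glimm}(A\otimes_\alpha B))$, pulling it back to $C^b(\mathrm{Prim}(A)\times\mathrm{Prim}(B))$ via $\rho_\alpha\circ\Phi$, pushing down to $g\in C^b(\mathrm{Glimm}(A)\times\mathrm{Glimm}(B),\tau_{cr})$, and using uniqueness of the Lazar extension to identify $g\circ\overline{(\rho_A\times\rho_B)}$ with $f\circ\rho_\alpha$. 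Your approach is arguably more transparent, since the inverse $\Phi^\rho$ is given concretely and the density argument is elementary; the paper's approach has the advantage of making the isomorphism $C^b(\mathrm{Glimm}(A\otimes_\alpha B))\cong C^b(\mathrm{Glimm}(A)\times\mathrm{Glimm}(B),\tau_{cr})$ explicit, which is exactly what is needed downstream for the Dauns--Hofmann identification of $ZM(A\otimes_\alpha B)$ in \S\ref{s:zma}. Both the forward construction and the final $\tau_p$ assertion agree with the paper.
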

\begin{proof}
 The map $\rho_A \times \rho_B : \mathrm{Prim} (A) \times \mathrm{Prim} (B) \rightarrow ( \mathrm{Glimm} (A) \times \mathrm{Glimm} (B) , \tau_{cr} )$ is the complete regularisation map of $\mathrm{Prim} (A) \times \mathrm{Prim} (B) $ by Lemma~\ref{l:prodcr}.  For the remainder of the proof we will consider $\mathrm{Glimm}(A) \times \mathrm{Glimm}(B)$ with this topology (from which the second assertion will follow since $\tau_p$ is weaker).

  By~\cite[Theorem 3.2]{lazar_tensor}, the map $(\rho_A \times \rho_B) \circ \Phi^{-1} : \Phi  \left( \mathrm{Prim} (A) \times \mathrm{Prim} (B) \right) \rightarrow \mathrm{Glimm} (A) \times \mathrm{Glimm} (B) $ extends uniquely to a continuous map $\overline{(\rho_A \times \rho_B)} : \mathrm{Prim} (A \otimes_{\alpha} B )\rightarrow \mathrm{Glimm} (A) \times \mathrm{Glimm} (B)$.

Since $\mathrm{Glimm} (A) \times \mathrm{Glimm} (B)$ is completely regular, $\overline{ ( \rho_A \times \rho_B ) }$  induces a continuous (surjective) map $\psi : \mathrm{Glimm} (A \otimes_{\alpha } B) \rightarrow \mathrm{Glimm} (A) \times \mathrm{Glimm} (B) $ with the property that $ \psi \circ \rho_{\alpha} = \overline{ ( \rho_A \times \rho_B )}$~\cite[Corollary 1.8]{walker}.  
\[
\xymatrix{ \mathrm{Prim} (A) \times \mathrm{Prim} (B) \ar@{^{(}->}^{\Phi}[r] \ar_{\rho_A \times \rho_B }[d] & \mathrm{Prim} (A \otimes_{\alpha} B ) \ar^{\rho_{\alpha}}[d] \ar_{\overline{\rho_A \times \rho_B}}[ld] \\
\mathrm{Glimm} (A) \times \mathrm{Glimm} (B) & \ar@{-->}^{\psi}[l] \mathrm{Glimm} ( A \otimes_{\alpha} B ) 
}
\]

To show that $\psi$ is in fact a homeomorphism, it suffices to show that the $\ast$-homomorphism $\psi^{\ast} : C^b ( \mathrm{Glimm} (A) \times \mathrm{Glimm} (B) ) \rightarrow C^b ( \mathrm{Glimm} (A \otimes_{\alpha} B ))$, $\psi^{\ast} (f) = f \circ \psi$ is surjective~\cite[Theorem 10.3 (b)]{gill_jer}.

To this end, let $f \in C^b ( \mathrm{Glimm} ( A \otimes_{\alpha} B ))$, so that $f \circ \rho_{\alpha} \in C^b \left( \mathrm{Prim} ( A \otimes_{\alpha} B ) \right)$ and hence $f \circ \rho_{\alpha} \circ \Phi \in C^b \left( \mathrm{Prim} (A) \times \mathrm{Prim} (B) \right)$.    Denote by $g \in C^b ( \mathrm{Glimm} (A) \times \mathrm{Glimm} (B) )$ the unique function such that $g \circ ( \rho_A \times \rho_B ) = f \circ \rho_{\alpha} \circ \Phi$.  Then $f  \circ \rho_{\alpha}$ and $g \circ \overline{( \rho_A \times \rho_B )}$ are both continuous extensions of $g \circ ( \rho_A \times \rho_B ) \circ \Phi^{-1}$ to $\mathrm{Prim} (A \otimes_{\alpha} B )$, hence must agree by~\cite[Theorem 3.2]{lazar_tensor}. 

Take $m \in \mathrm{Glimm} ( A \otimes_{\alpha} B)$ and $M \in \mathrm{Prim} ( A \otimes_{\alpha} B )$ such that $\rho_{\alpha} (M) = m$.  Then
\begin{eqnarray*}
\psi^{\ast}(g) (m) = (g \circ \psi ) (m) & = & (g \circ \psi \circ \rho_{\alpha} )(M) \\
& = & ( g \circ \overline{( \rho_A \times \rho_B )} )(M) \\
& = & (f \circ \rho_{\alpha} )(M) \\
& = & f (m) 
\end{eqnarray*}
It follows that $\psi^{\ast} (g) = f$, hence $\psi^{\ast}$ is surjective.
\end{proof}

Note that Theorem~\ref{t:homeo} shows that $\rho_{\alpha} \circ \Phi$ is surjective.  In particular, given any $M \in \mathrm{Prim}(A \otimes_{\alpha} B )$ there exist $(P,Q) \in \mathrm{Prim}(A) \times \mathrm{Prim}(B)$ such that $M \approx \Phi (P,Q)$.

\begin{corollary}
\label{c:wcpct2}
Let $A$ and $B$ be C$^{\ast}$-algebras such that either $A$ or $B$ satisfies one of the  conditions (i)-(iii) of Proposition~\ref{p:wcpct}.  Then $\tau_{cr} = \tau_p$ on $\mathrm{Glimm} (A) \times \mathrm{Glimm} (B)$, and hence $\mathrm{Glimm}(A \otimes_{\alpha} B)$ is homeomorphic to $ \left ( \mathrm{Glimm}(A) \times \mathrm{Glimm} (B) , \tau_p \right) $ via the map $\psi$ of Theorem~\ref{t:homeo}.
\end{corollary}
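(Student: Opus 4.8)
The plan is to obtain the corollary directly from the results already assembled, essentially by bookkeeping together with one symmetry reduction. First I would treat the case in which $A$ satisfies one of the conditions (i)--(iii) of Proposition~\ref{p:wcpct}. In that case Proposition~\ref{p:wcpct} asserts precisely that every $P \in \mathrm{Prim}(A)$ has a cozero set neighbourhood with w-compact closure, which is condition (ii) of Theorem~\ref{t:ishii} for the space $X = \mathrm{Prim}(A)$. Applying the equivalence of Theorem~\ref{t:ishii} with $Y = \mathrm{Prim}(B)$ then gives $\rho(\mathrm{Prim}(A) \times \mathrm{Prim}(B)) = \rho\,\mathrm{Prim}(A) \times \rho\,\mathrm{Prim}(B)$. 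By the notational conventions fixed after Lemma~\ref{l:prodcr}, this equality is by definition the statement that $\tau_{cr} = \tau_p$ on the common underlying set $\mathrm{Glimm}(A) \times \mathrm{Glimm}(B)$.

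Next I would dispose of the case in which $B$ rather than $A$ satisfies one of (i)--(iii). Here I would run the previous argument with the roles of $A$ and $B$ interchanged, obtaining $\rho(\mathrm{Prim}(B) \times \mathrm{Prim}(A)) = \rho\,\mathrm{Prim}(B) \times \rho\,\mathrm{Prim}(A)$, and then transport this across the coordinate-swap homeomorphism $\mathrm{Prim}(A) \times \mathrm{Prim}(B) \to \mathrm{Prim}(B) \times \mathrm{Prim}(A)$. Functoriality of the Tychonoff functor guarantees that $\rho$ applied to this swap is a homeomorphism intertwining $\tau_{cr}$ on the two sides, and the swap likewise intertwines the two product topologies $\tau_p$; hence $\tau_{cr} = \tau_p$ on $\mathrm{Glimm}(A) \times \mathrm{Glimm}(B)$ in this case as well. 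In either case the two topologies coincide.

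To finish, I would feed the equality $\tau_{cr} = \tau_p$ into Theorem~\ref{t:homeo}. That theorem supplies a homeomorphism $\psi : \mathrm{Glimm}(A \otimes_{\alpha} B) \to (\mathrm{Glimm}(A) \times \mathrm{Glimm}(B), \tau_{cr})$; since $\tau_{cr}$ and $\tau_p$ now name the same topology, the identical map $\psi$ is at once a homeomorphism onto $(\mathrm{Glimm}(A) \times \mathrm{Glimm}(B), \tau_p)$, which is exactly what is claimed. I do not expect any genuine obstacle, as all the substantive analysis is already encapsulated in Proposition~\ref{p:wcpct}, Theorem~\ref{t:ishii} and Theorem~\ref{t:homeo}; the only point deserving care is verifying that the coordinate swap is compatible with both the $\tau_{cr}$- and $\tau_p$-topologies in the symmetry step above.
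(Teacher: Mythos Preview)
Your proposal is correct and follows essentially the same route as the paper, which simply records the corollary as ``immediate from Proposition~\ref{p:wcpct} and Theorem~\ref{t:homeo}.'' Your version merely unpacks what Proposition~\ref{p:wcpct} already establishes (namely $\rho(\mathrm{Prim}(A)\times\mathrm{Prim}(B)) = \mathrm{Glimm}(A)\times\mathrm{Glimm}(B)$) and makes the symmetry reduction for the $B$-case explicit, which the paper leaves tacit.
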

\begin{proof}
Immediate from Proposition~\ref{p:wcpct} and Theorem~\ref{t:homeo}.
\end{proof}

\section{The central multipliers of $A \otimes_{\alpha} B$}
\label{s:zma}
In this section we  apply Theorem~\ref{t:homeo} to determine the centre of the multiplier algebra of $A \otimes_{\alpha} B$ in terms of the topological space $( \mathrm{Glimm}(A) \times \mathrm{Glimm} (B) , \tau_{cr} )$. We show in Theorem~\ref{t:zma} that $ZM(A \otimes_{\alpha} B )$ is $\ast$-isomorphic to the C$^{\ast}$-algebra of continuous functions on the Stone-\v{C}ech compactification of $( \mathrm{Glimm}(A) \times \mathrm{Glimm} (B) , \tau_{cr} )$.  Further in Theorem~\ref{p:zma} we give necessary and sufficient conditions on this space for  which $ZM(A) \otimes ZM(B) = ZM(A \otimes_{\alpha} B)$. 

 The embedding of $M(A) \otimes_{\alpha} M(B) \subseteq M(A \otimes_{\alpha} B )$ is discussed in~\cite{apt}, we include a proof in Lemma~\ref{l:embed} below for completeness. It is shown in~\cite[Corollary 1]{hay_wass} that for C$^{\ast}$-algebras $C$ and $D$ we have $Z(C \otimes_{\alpha} D) = Z(C) \otimes Z(D)$ (where $Z(C) \otimes Z(D)$ is the unique C$^{\ast}$-completion of the algebraic tensor product $Z(C) \odot Z(D)$ by nuclearity).  In particular it follows that for any C$^{\ast}$-algebras $A$ and $B$ we may identify $Z( M(A) \otimes_{\alpha} M(B) ) = ZM(A) \otimes ZM(B)$.  Thus in this section we are concerned with relating the centre of the larger algebra $M(A \otimes_{\alpha} B)$  with that of $M(A) \otimes_{\alpha} M(B)$. 
 
 Suppose that $C$ is a C$^{\ast}$-algebra and $z \in M(C)$ such that $zc=cz$ for all $c \in C$, and take $m \in M(C)$.  Then
 \[
 (zm)c = z(mc) = (mc)z = m (cz ) = m (zc) = (mz)c,
 \]
and similarly $c (zm) = c (mz)$ for all $c \in C$.  Thus $ zm = mz $ and so we may identify
\begin{equation}
\label{eq:zma}
ZM(C) = \{ z \in M(C) : zc = cz \mbox{ for all } c \in C \}.
\end{equation}

Recall that an ideal $I$ of a C$^{\ast}$-algebra $C$ is said to be \emph{essential} in $C$ if given any nonzero ideal $J$ of $C$, $J \cap I \neq \{ 0 \}$.  Equivalently for any $c \in C$, $cI = Ic = \{ 0 \}$ implies $c = 0$.
\begin{lemma}
\label{l:embed}
There is a canonical embedding $\Theta : M(A) \otimes_{\alpha} M(B) \rightarrow M(A \otimes_{\alpha} B )$ such that
\[
\left( \Theta ( x \otimes y ) \right) ( a \otimes b ) = xa \otimes yb \mbox{ and } (a \otimes b ) \left( \Theta (x \otimes y ) \right) = ax \otimes by
\]
for all $a \in A, b \in B, x \in M(A), y \in M(B)$.  Moreover, $\Theta ( ZM(A) \otimes ZM(B) ) \subseteq ZM(A \otimes_{\alpha} B)$.
\end{lemma}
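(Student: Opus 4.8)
The plan is to realise $A \otimes_{\alpha} B$ as an essential ideal of $M(A) \otimes_{\alpha} M(B)$ and then to invoke the universal property of the multiplier algebra, namely that a C$^{\ast}$-algebra containing $A \otimes_{\alpha} B$ as an ideal maps canonically into $M(A \otimes_{\alpha} B)$. First I would check that $A \otimes_{\alpha} B$ is a closed two-sided ideal of $M(A) \otimes_{\alpha} M(B)$. On elementary tensors, $(x \otimes y)(a \otimes b) = xa \otimes yb$ and $(a \otimes b)(x \otimes y) = ax \otimes by$ lie in the algebraic tensor product $A \odot B$, since $A$ and $B$ are ideals in $M(A)$ and $M(B)$ respectively; because $\otimes_{\alpha}$ is a cross norm and $A \odot B$ is dense in $A \otimes_{\alpha} B$, passing to linear combinations and norm limits gives $(M(A) \otimes_{\alpha} M(B))(A \otimes_{\alpha} B) \subseteq A \otimes_{\alpha} B$, and symmetrically on the right.

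Granting the ideal property, the standard double-centraliser construction yields the map: for $z \in M(A) \otimes_{\alpha} M(B)$ and $c \in A \otimes_{\alpha} B$, set $\Theta(z)c = zc$ and $c\,\Theta(z) = cz$, the products being computed in $M(A) \otimes_{\alpha} M(B)$ and landing in the ideal $A \otimes_{\alpha} B$. Associativity gives the double-centraliser identity $c_1(z c_2) = (c_1 z) c_2$, so that $\Theta(z) \in M(A \otimes_{\alpha} B)$, and it is routine that $\Theta$ preserves products and adjoints. Taking $z = x \otimes y$ and reading off the two products recovers exactly the displayed formulas $\Theta(x \otimes y)(a \otimes b) = xa \otimes yb$ and $(a \otimes b)\Theta(x \otimes y) = ax \otimes by$.

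The main point, and the only step demanding genuine work, is injectivity of $\Theta$, equivalently essentiality of $A \otimes_{\alpha} B$ in $M(A) \otimes_{\alpha} M(B)$. Here I would argue spatially. Choose faithful nondegenerate representations of $A$ and $B$ on Hilbert spaces $H_A$ and $H_B$; since $A$ (resp.\ $B$) is essential in $M(A)$ (resp.\ $M(B)$), each extends to a \emph{faithful} representation $\pi_A$ of $M(A)$ and $\pi_B$ of $M(B)$ on the same spaces. By the defining property of the minimal tensor norm, $\pi_A \otimes \pi_B$ is a faithful representation of $M(A) \otimes_{\alpha} M(B)$ on $H_A \otimes H_B$, and its restriction to $A \otimes_{\alpha} B$ is the tensor product of two nondegenerate representations, hence nondegenerate. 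Consequently, if $z \in M(A) \otimes_{\alpha} M(B)$ satisfies $z(A \otimes_{\alpha} B) = \{0\}$, then $(\pi_A \otimes \pi_B)(z)$ annihilates a dense subspace of $H_A \otimes H_B$ and therefore vanishes, forcing $z = 0$. Thus $\ker \Theta = \{0\}$ and $\Theta$ is an embedding.

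Finally, for the \emph{moreover} clause I would use the identification $Z(M(A) \otimes_{\alpha} M(B)) = ZM(A) \otimes ZM(B)$ recorded above via~\cite{hay_wass}. If $z \in ZM(A) \otimes ZM(B)$ then $z$ is central in $M(A) \otimes_{\alpha} M(B)$, so $zc = cz$ for every $c \in A \otimes_{\alpha} B$; hence $\Theta(z)c = zc = cz = c\,\Theta(z)$ for all such $c$, and by the characterisation~(\ref{eq:zma}) of the central multipliers this places $\Theta(z)$ in $ZM(A \otimes_{\alpha} B)$. The expected obstacle throughout is precisely the essentiality step; the construction of $\Theta$, the elementary-tensor formulas, and the centrality statement are then bookkeeping with the ideal structure and the centre identification.
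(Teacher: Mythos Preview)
Your proof is correct and follows the same overall architecture as the paper: realise $A \otimes_{\alpha} B$ as an essential ideal of $M(A) \otimes_{\alpha} M(B)$ and invoke the universal property of the multiplier algebra (the paper cites Busby~\cite[Proposition 3.7]{busby} rather than writing out the double-centraliser construction, but this is the same content). The one substantive difference is in the essentiality step. The paper argues algebraically: it quotes~\cite[Proposition 4.5]{allen_sinclair_smith} to the effect that any nonzero ideal of $M(A) \otimes_{\alpha} M(B)$ contains a nonzero elementary tensor $x \otimes y$, and then uses essentiality of $A$ in $M(A)$ and of $B$ in $M(B)$ to produce $a,a' \in A$, $b,b' \in B$ with $(a' \otimes b')(x \otimes y)(a \otimes b) \neq 0$ in the ideal intersected with $A \otimes_{\alpha} B$. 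Your spatial argument via faithful nondegenerate representations is equally valid and perhaps more self-contained, trading the cited structural fact about ideals for the standard properties of the minimal tensor norm. For the \emph{moreover} clause the paper computes directly on elementary tensors $z_1 \otimes z_2$ rather than invoking the Haydon--Wassermann identification of $Z(M(A) \otimes_{\alpha} M(B))$; your route is slicker given that this identification is already stated in the surrounding text.
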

\begin{proof}
Clearly $M(A) \otimes_{\alpha} M(B)$ contains $A \otimes_{\alpha} B$ as a two-sided ideal.  Suppose $J$ is a nonzero ideal of $M(A) \otimes_{\alpha} M(B)$.  Then by~\cite[Proposition 4.5]{allen_sinclair_smith}, $J$ contains a nonzero elementary tensor $x \otimes y$ where $x \in M(A), y \in M(B)$.  Since $A$ is essential in $M(A)$, there is $a \in A$ with either $ax \neq 0 $ or $xa \neq 0 $.  Suppose w.l.o.g. that $xa \neq 0 $, so that  $\| (xa)^{\ast} xa \| = \| xa \|^2 \neq 0$. Setting $a' = (xa)^{\ast}$, we then have an element $a' \in A$ with $a'xa \neq 0$. Similarly there are $b, b' \in B$ with $b'yb \neq 0 $.  It follows that
\[
a'xa \otimes b'yb = (a' \otimes b')(x \otimes y )(a \otimes b )
\]
is a nonzero element of $J \cap (A \otimes_{\alpha} B )$. Hence $A \otimes_{\alpha} B$ is essential in $M(A) \otimes_{\alpha} M(B)$.

By~\cite[Proposition 3.7 (i) and (ii)]{busby}, there is a unique $\ast$-homomorphism $\Theta : M(A) \otimes_{\alpha} M(B) \rightarrow M(A \otimes_{\alpha} B )$ extending the canonical inclusion of $A \otimes_{\alpha} B$ into $M(A \otimes_{\alpha} B )$, which is injective  since $A \otimes_{\alpha} B$ is essential in $M(A) \otimes_{\alpha} M(B)$. For elementary tensors $x \otimes y \in M(A) \otimes_{\alpha} M(B)$ and $a \otimes b \in A \otimes_{\alpha} B$ we have
\[
\left( \Theta (x \otimes y ) \right) (a \otimes b ) = \Theta (x \otimes y ) \Theta (a \otimes b ) = \Theta (xa \otimes yb ) = xa \otimes yb,
\]
(since $\Theta$ is the identity on $A \otimes_{\alpha} B$), and similarly $ (a \otimes b ) \left( \Theta (x \otimes y ) \right) = ax \otimes by$.

For elementary tensors $z_1 \otimes z_2 \in ZM(A) \otimes ZM(B)$ and $a \otimes b \in A \otimes_{\alpha} B$ we have
\[
\Theta ( z_1 \otimes z_2 )(a \otimes b ) = z_1 a \otimes z_2 b = a z_1 \otimes b z_2 =  (a \otimes b ) \Theta (z_1 \otimes z_2 ),
\]
from which it follows that for any $z \in ZM(A) \otimes ZM(B)$ and $c \in A \otimes_{\alpha} B$, $\Theta (z) c = c \Theta (z)$.  Hence by~(\ref{eq:zma}) we see that $\Theta (ZM(A) \otimes ZM(B) ) \subseteq ZM(A \otimes_{\alpha} B )$.
\end{proof}

We remark that it was shown in~\cite[Theorem 3.8]{apt} that if $A$ is $\sigma$-unital and non-unital, and $B$ is infinite dimensional, then $\Theta$ is not surjective. In what follows, we will suppress mention of $\Theta$ and simply consider $M(A) \otimes M(B) \subseteq M(A \otimes_{\alpha} B )$.

The relationship between the central multipliers of a C$^{\ast}$-algebra and its Glimm space was established by Dauns and Hofmann as a corollary to their work on sectional representation for C$^{\ast}$-algebras:

\begin{corollary}~\cite[III Corollary 8.16]{dauns_hofmann}
\label{c:dh}
For any C$^{\ast}$-algebra $A$, there is a homeomorphism of $\mathrm{Prim}\left( ZM(A) \right)$ onto $\beta \mathrm{Glimm} (A)$, and hence a $\ast$-isomorphism $\mu_A :  C ( \beta \mathrm{Glimm} (A) ) \rightarrow ZM(A)$.  Moreover, $\mu_A$ satisfies
\[
\mu_A (f) a - f(p)a \in G_p, \mbox{ for all } f \in C( \beta \mathrm{Glimm}(A) ), p \in \mathrm{Glimm}(A), a \in A,
\]
where $G_p = \bigcap \{ P \in \mathrm{Prim} (A) : \rho_A (P)=p \}$ is the Glimm ideal of $A$ corresponding to $p$.
\end{corollary}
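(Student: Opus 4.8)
The plan is to deduce this from the classical Dauns--Hofmann theorem on sectional representations, reducing the statement to a chain of three standard identifications together with a check that the defining relation descends correctly to the Glimm ideals. The starting point is the observation that $ZM(A)$ is a \emph{unital} commutative C$^{\ast}$-algebra, since $M(A)$ is unital and its centre contains the unit; hence by Gelfand--Naimark $ZM(A) \cong C(\mathrm{Prim}(ZM(A)))$ with $\mathrm{Prim}(ZM(A))$ a compact Hausdorff space (its maximal ideal space). The content of the corollary is therefore to identify this spectrum with $\beta\mathrm{Glimm}(A)$ in a way compatible with the pointwise action.

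First I would invoke the basic form of the Dauns--Hofmann theorem, which provides a $\ast$-isomorphism $\nu_A : C^b(\mathrm{Prim}(A)) \to ZM(A)$ characterised by $\nu_A(g) a - g(P) a \in P$ for all $g \in C^b(\mathrm{Prim}(A))$, $P \in \mathrm{Prim}(A)$ and $a \in A$. Next I would use the universal property of the complete regularisation recorded in \S1: the map $\rho_A^{\ast} : C^b(\mathrm{Glimm}(A)) \to C^b(\mathrm{Prim}(A))$, $f \mapsto f \circ \rho_A$, is a $\ast$-isomorphism. Finally, since $\mathrm{Glimm}(A) = (\rho\,\mathrm{Prim}(A), \tau_{cr})$ is by construction completely regular, its Stone--\v{C}ech compactification exists and restriction gives a $\ast$-isomorphism $C(\beta\mathrm{Glimm}(A)) \to C^b(\mathrm{Glimm}(A))$. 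Composing these three isomorphisms yields the desired $\mu_A : C(\beta\mathrm{Glimm}(A)) \to ZM(A)$, namely $\mu_A(f) = \nu_A(f \circ \rho_A)$.

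The homeomorphism $\mathrm{Prim}(ZM(A)) \cong \beta\mathrm{Glimm}(A)$ then follows by Gelfand duality (Banach--Stone): a $\ast$-isomorphism between $C(X)$ and $C(Y)$ for compact Hausdorff $X,Y$ is dual to a unique homeomorphism $X \to Y$, applied here with $X = \mathrm{Prim}(ZM(A))$ and $Y = \beta\mathrm{Glimm}(A)$. For the pointwise relation, I would fix $p \in \mathrm{Glimm}(A)$, $f \in C(\beta\mathrm{Glimm}(A))$ and $a \in A$, and put $g = f \circ \rho_A \in C^b(\mathrm{Prim}(A))$. For every $P$ in the fibre $\rho_A^{-1}(p) = \mathrm{hull}(G_p)$ one has $g(P) = f(\rho_A(P)) = f(p)$, so the basic Dauns--Hofmann relation gives $\mu_A(f) a - f(p) a = \nu_A(g) a - g(P) a \in P$. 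Intersecting over all such $P$ and using $G_p = \bigcap\{ P \in \mathrm{Prim}(A) : \rho_A(P) = p \} = k(\mathrm{hull}(G_p))$ yields $\mu_A(f) a - f(p) a \in G_p$, as required.

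The genuine difficulty is concentrated entirely in the classical Dauns--Hofmann theorem that I am quoting as input: establishing that each central multiplier acts as a scalar on every primitive quotient and that the resulting scalar field is continuous on $\mathrm{Prim}(A)$. Granting that, the remaining steps are bookkeeping; the only points requiring care are ensuring the Stone--\v{C}ech compactification is formed with respect to $\tau_{cr}$ (so that the relevant $C^b$ matches the Dauns--Hofmann algebra) rather than $\tau_q$, and confirming that each Glimm ideal $G_p$ really is the kernel of its hull, so that the fibrewise congruences intersect down to membership in $G_p$.
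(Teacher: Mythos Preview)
The paper does not give its own proof of this corollary: it is quoted directly from Dauns and Hofmann~\cite[III Corollary 8.16]{dauns_hofmann} as a known result, with no argument supplied. Your derivation is correct and is the standard way to unpack the statement from the basic form of the Dauns--Hofmann theorem (the isomorphism $C^b(\mathrm{Prim}(A)) \cong ZM(A)$ via scalar action on primitive quotients), composed with the complete-regularisation isomorphism $\rho_A^{\ast}$ and the Stone--\v{C}ech identification $C^b(\mathrm{Glimm}(A)) \cong C(\beta\mathrm{Glimm}(A))$; the check that the relation descends to $G_p$ by intersecting over $\mathrm{hull}(G_p)$ is exactly right.
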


Applying this identification to $A \otimes_{\alpha} B$ together with the homeomorphism $\psi$ of Theorem~\ref{t:homeo} allows us to determine $ZM(A \otimes_{\alpha} B)$ in terms of $\mathrm{Glimm}(A)$ and $\mathrm{Glimm} (B)$:

\begin{theorem}
\label{t:zma}
Let $A$ and $B$ be C$^{\ast}$-algebras and denote by $\psi$ the homeomorphism of Theorem~\ref{t:homeo}.  For each point $p \in \mathrm{Glimm}(A \otimes_{\alpha} B )$ let $G_p$ denote the Glimm ideal of $A \otimes_{\alpha} B$ corresponding to $p$.  Then there is a canonical $\ast$-isomorphism $\Theta_{\alpha} : C( \beta ( \mathrm{Glimm}(A) \times \mathrm{Glimm} (B) , \tau_{cr} ) ) \rightarrow ZM(A \otimes_{\alpha} B )$ with the property that
\[
\Theta_{\alpha} (f) c - ( f \circ \psi ) (p) c \in G_p,
\]
for all $f \in C( \beta( \mathrm{Glimm}(A) \times \mathrm{Glimm} (B) , \tau_{cr} ) ), p \in \mathrm{Glimm} (A \otimes_{\alpha} B )$ and $c \in A \otimes_{\alpha} B$.
\end{theorem}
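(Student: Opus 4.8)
The plan is to obtain $\Theta_{\alpha}$ by composing the Dauns-Hofmann isomorphism for $A \otimes_{\alpha} B$ with the isomorphism of function algebras induced by the homeomorphism $\psi$. For brevity write $X = ( \mathrm{Glimm}(A) \times \mathrm{Glimm}(B), \tau_{cr} )$. First I would apply Corollary~\ref{c:dh} to the C$^{\ast}$-algebra $A \otimes_{\alpha} B$ itself. This yields a $\ast$-isomorphism $\mu_{A \otimes_{\alpha} B} : C( \beta \mathrm{Glimm}(A \otimes_{\alpha} B) ) \rightarrow ZM(A \otimes_{\alpha} B)$ satisfying $\mu_{A \otimes_{\alpha} B}(g) c - g(p) c \in G_p$ for every $g \in C( \beta \mathrm{Glimm}(A \otimes_{\alpha} B) )$, every $p \in \mathrm{Glimm}(A \otimes_{\alpha} B)$ and every $c \in A \otimes_{\alpha} B$.

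Next I would transport this isomorphism across $\psi$. Both $\mathrm{Glimm}(A \otimes_{\alpha} B)$ and $X$ are Tychonoff spaces, being complete regularisations (here it is essential that $X$ carries the topology $\tau_{cr}$, so that it is genuinely the complete regularisation of $\mathrm{Prim}(A) \times \mathrm{Prim}(B)$ by Lemma~\ref{l:prodcr}). Hence, by the functoriality of the Stone-\v{C}ech compactification, the homeomorphism $\psi$ of Theorem~\ref{t:homeo} extends to a homeomorphism $\beta \psi : \beta \mathrm{Glimm}(A \otimes_{\alpha} B) \rightarrow \beta X$ whose restriction to the dense subspace $\mathrm{Glimm}(A \otimes_{\alpha} B)$ is $\psi$ itself. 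Consequently the map $(\beta \psi)^{\ast} : C( \beta X ) \rightarrow C( \beta \mathrm{Glimm}(A \otimes_{\alpha} B) )$, $(\beta \psi)^{\ast}(f) = f \circ \beta \psi$, is a $\ast$-isomorphism, and I would define $\Theta_{\alpha} = \mu_{A \otimes_{\alpha} B} \circ (\beta \psi)^{\ast}$, which is a $\ast$-isomorphism as a composition of two such maps.

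Finally I would verify the stated congruence. Fix $f \in C(\beta X)$, a point $p \in \mathrm{Glimm}(A \otimes_{\alpha} B)$ and $c \in A \otimes_{\alpha} B$, and set $g = (\beta \psi)^{\ast}(f) = f \circ \beta \psi$. Applying the Dauns-Hofmann property of $\mu_{A \otimes_{\alpha} B}$ to $g$ at $p$ gives $\Theta_{\alpha}(f) c - g(p) c \in G_p$. Since $\beta \psi$ agrees with $\psi$ on $\mathrm{Glimm}(A \otimes_{\alpha} B)$, we have $g(p) = f( \beta \psi (p) ) = f( \psi (p) ) = (f \circ \psi)(p)$, and the required relation $\Theta_{\alpha}(f) c - (f \circ \psi)(p) c \in G_p$ follows.

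The argument is essentially a bookkeeping exercise once Corollary~\ref{c:dh} and Theorem~\ref{t:homeo} are in hand. The only point demanding genuine care is the passage to the Stone-\v{C}ech compactifications, namely the assertion that $\psi$ lifts to a homeomorphism $\beta \psi$ that restricts to $\psi$ on the original space; this is where the choice of $\tau_{cr}$ (rather than the possibly weaker $\tau_p$) is used, since it guarantees that $X$ is Tychonoff and so admits a Stone-\v{C}ech compactification with the expected universal property.
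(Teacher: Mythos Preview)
Your proof is correct and follows the same approach as the paper: both define $\Theta_{\alpha}$ as the composition of the Dauns--Hofmann isomorphism $\mu_{\alpha}$ for $A \otimes_{\alpha} B$ with the $\ast$-isomorphism $\psi^{\ast}$ induced by the homeomorphism $\psi$. Your version is in fact slightly more explicit than the paper's about the Stone--\v{C}ech extension of $\psi$, but the underlying argument is identical.
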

\begin{proof}
Since $\psi$ is a homeomorphism the induced map $\psi^{\ast}$ is a $\ast$-isomorphism.  Denote by $\mu_{\alpha}$ the $\ast$-isomorphism of Corollary~\ref{c:dh} applied to $A \otimes_{\alpha} B$, and by $\Theta_{\alpha}$ the composition of the $\ast$-isomorphisms
  
\[ 
\xymatrix{
 C \left( \beta ( \mathrm{Glimm}(A) \times \mathrm{Glimm}(B) , \tau_{cr} ) \right)  \ar^<<<<<{\psi^{\ast}}[r] & C \left( \beta \mathrm{Glimm} ( A \otimes_{\alpha} B ) \right) \ar^<<<<{\mu_{\alpha}}[r] & ZM(A \otimes_{\alpha} B).
}
\]
Then $\Theta_{\alpha}$ clearly has the required properties since $\mu_{\alpha}$ does.
\end{proof}

On the other hand, applying the identification of Corollary~\ref{c:dh} to $A$ and $B$ separately gives $\ast$-isomorphisms
\[
\xymatrix{
C \left( \beta \mathrm{Glimm}(A) \times \beta \mathrm{Glimm} (B) \right) \ar^{\nu}[r] & C \left( \beta \mathrm{Glimm} (A) \right) \otimes C \left( \beta \mathrm{Glimm} (B) \right) \ar^<<<<<{\mu_A \otimes \mu_B}[r] & ZM(A) \otimes ZM(B),
}
\]
where $\nu$ is the canonical identification satisfying $\nu^{-1} ( f \otimes g ) (x,y) = f(x)g(y)$ for all elementary tensors $f \otimes g$ and $(x,y) \in \beta \mathrm{Glimm}(A) \times \beta \mathrm{Glimm} (B)$.

Let $X$ and $Y$ be completely regular spaces.  Then since the product $\beta X \times \beta Y$ is compact, the universal property of the Stone-\v{C}ech compactification~\cite[Theorem 6.5 (I)]{gill_jer} ensures that the inclusion $\iota: X \times Y \rightarrow \beta X \times \beta Y$ has a continuous extension to $\iota^{\beta} : \beta (X \times Y ) \rightarrow \beta X \times \beta Y$.  Moreover, since $\iota$ has dense range, compactness of $ \beta (X \times Y )$ implies that $\iota^{\beta}$ is necessarily surjective.

Considering $\mathrm{Glimm}(A) \subseteq \beta \mathrm{Glimm}(A)$ and $\mathrm{Glimm}(B) \subseteq \beta \mathrm{Glimm}(B)$, Lemma~\ref{l:prodcr} gives a continuous map  
\[
\phi: \left( \mathrm{Glimm}(A) \times \mathrm{Glimm} (B) , \tau_{cr} \right) \rightarrow (\mathrm{Glimm}(A) \times \mathrm{Glimm}(B), \tau_p) \subseteq \beta \mathrm{Glimm} (A) \times \beta \mathrm{Glimm} (B),
\]
and $\phi$ is a homeomorphism onto its range if and only if $\tau_p = \tau_{cr}$ on $\mathrm{Glimm}(A) \times \mathrm{Glimm} (B)$.  Again by the universal property of the Stone-Cech compactification, $\phi$ extends to a continuous surjection
\[
\phi^{\beta} : \beta \left( \mathrm{Glimm}(A) \times \mathrm{Glimm} (B) , \tau_{cr} \right) \rightarrow  \beta \mathrm{Glimm} (A) \times \beta \mathrm{Glimm} (B).
\]
Dual to this map is an injective $\ast$-homomorphism 
\[
(\phi^{\beta})^{\ast} : C \left( \beta \mathrm{Glimm} (A) \times \beta \mathrm{Glimm} (B) \right) \rightarrow C \left( \beta \left( \mathrm{Glimm}(A) \times \mathrm{Glimm} (B) , \tau_{cr} \right) \right),
\]
sending $f \mapsto f \circ \phi^{\beta}$.  

The situation is summarised in the following diagram:

\[
\xymatrix{ C \left( \beta \mathrm{Glimm} (A) \times \beta \mathrm{Glimm} (B) \right) \ar@{^{(}->}^{(\phi^{\beta})^{\ast}}[r] \ar_{\nu}[d] & C \left( \beta( \mathrm{Glimm}(A) \times \mathrm{Glimm} (B), \tau_{cr}) \right) \ar^{\psi^{\ast}}[d]  \\
C \left( \beta \mathrm{Glimm} (A) \right) \otimes C \left( \beta \mathrm{Glimm} (B) \right) \ar_{\mu_A \otimes \mu_B}[d] & C \left( \beta \mathrm{Glimm} ( A \otimes_{\alpha} B ) \right) \ar^{\mu_{\alpha}}[d]\\
ZM(A) \otimes ZM(B) \ar@{^{(}->}[r] & ZM(A \otimes_{\alpha} B)
}
\]

\begin{corollary}
\label{c:phibeta}
For any C$^{\ast}$-algebras $A$ and $B$, $ZM(A) \otimes ZM(B) = ZM( A \otimes_{\alpha} B )$ if and only if the canonical map 
\[
\phi^{\beta} : \beta \left( \mathrm{Glimm}(A) \times \mathrm{Glimm} (B) , \tau_{cr} \right) \rightarrow  \beta \mathrm{Glimm} (A) \times \beta \mathrm{Glimm} (B)
\]
is injective.  Moreover, when this occurs we have $\tau_p = \tau_{cr}$ on $\mathrm{Glimm}(A) \times \mathrm{Glimm} (B)$.
\end{corollary}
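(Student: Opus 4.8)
The plan is to read the corollary off the commutative diagram that precedes it, reducing the whole question to the behaviour of the top horizontal arrow $(\phi^{\beta})^{\ast}$. The left-hand vertical composite $(\mu_A \otimes \mu_B) \circ \nu$ and the right-hand vertical composite $\mu_{\alpha} \circ \psi^{\ast}$ are both $\ast$-isomorphisms, while $\Theta$ (the bottom inclusion of Lemma~\ref{l:embed}) is injective. Once the diagram is known to commute, the identity $\Theta \circ (\mu_A \otimes \mu_B) \circ \nu = \mu_{\alpha} \circ \psi^{\ast} \circ (\phi^{\beta})^{\ast}$ gives $\mathrm{im}\,\Theta = (\mu_{\alpha} \circ \psi^{\ast})\big(\mathrm{im}\,(\phi^{\beta})^{\ast}\big)$; since $\mu_{\alpha} \circ \psi^{\ast}$ is onto, $\Theta$ is surjective (that is, $ZM(A) \otimes ZM(B) = ZM(A \otimes_{\alpha} B)$) if and only if $(\phi^{\beta})^{\ast}$ is surjective.

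The step that requires genuine work---the main obstacle---is verifying that the diagram commutes, equivalently that for $f \in C(\beta\mathrm{Glimm}(A))$ and $g \in C(\beta\mathrm{Glimm}(B))$ the multiplier $\Theta(\mu_A(f) \otimes \mu_B(g))$ is the one that Corollary~\ref{c:dh} assigns to the function $(p_A,p_B) \mapsto f(p_A)g(p_B)$ on $(\mathrm{Glimm}(A) \times \mathrm{Glimm}(B), \tau_{cr})$; since such elementary tensors generate $C(\beta\mathrm{Glimm}(A) \times \beta\mathrm{Glimm}(B))$ as a $C^{\ast}$-algebra, this suffices. I would factor $\mu_A(f) \otimes \mu_B(g) = (\mu_A(f) \otimes 1)(1 \otimes \mu_B(g))$ and treat each factor separately. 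By Lemma~\ref{l:embed}, $\Theta(\mu_A(f) \otimes 1)$ sends $a \otimes b \mapsto \mu_A(f)a \otimes b$; combining $\mu_A(f)a - f(\rho_A(P))a \in G_{\rho_A(P)} \subseteq P$ (Corollary~\ref{c:dh}) with $P \otimes_{\alpha} B \subseteq \ker(q_P \otimes q_Q) = \Phi(P,Q)$ shows that the Dauns--Hofmann function of $\Theta(\mu_A(f) \otimes 1)$ takes the value $f(\rho_A(P))$ at every $\Phi(P,Q)$. As $\Phi(\mathrm{Prim}(A) \times \mathrm{Prim}(B))$ is dense in $\mathrm{Prim}(A \otimes_{\alpha} B)$ (Proposition~\ref{p:phi}(ii)) and both functions are continuous, this identifies that Dauns--Hofmann function with $f \circ \pi_1 \circ \psi \circ \rho_{\alpha}$, and symmetrically for $1 \otimes \mu_B(g)$. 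Multiplying (central multipliers and their associated functions multiply under the Dauns--Hofmann isomorphism) yields the function $(p_A,p_B) \mapsto f(p_A)g(p_B)$, and transporting this back along $\psi$ and $\nu$ matches the two routes around the diagram.

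With commutativity in hand, it remains to note the standard Gelfand-duality fact that, for a continuous map $\phi^{\beta}$ of compact Hausdorff spaces, $(\phi^{\beta})^{\ast}$ is surjective if and only if $\phi^{\beta}$ is injective: if $\phi^{\beta}$ is injective it is a homeomorphism onto its (necessarily closed) image, so every $h \in C(\beta(\mathrm{Glimm}(A) \times \mathrm{Glimm}(B), \tau_{cr}))$ transports to a continuous function on that image and extends by Tietze, whence $(\phi^{\beta})^{\ast}$ is onto; conversely, if $\phi^{\beta}(s) = \phi^{\beta}(t)$ with $s \neq t$, then every function in $\mathrm{im}\,(\phi^{\beta})^{\ast}$ agrees at $s$ and $t$, so $(\phi^{\beta})^{\ast}$ cannot be onto. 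This gives the stated equivalence.

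For the final clause, suppose $\phi^{\beta}$ is injective. It is already a continuous surjection (it extends the dense-range map $\phi$ and its domain is compact), hence a continuous bijection from a compact space onto a Hausdorff space, and therefore a homeomorphism. Restricting it to the canonically embedded dense copy of $\mathrm{Glimm}(A) \times \mathrm{Glimm}(B)$ inside $\beta(\mathrm{Glimm}(A) \times \mathrm{Glimm}(B), \tau_{cr})$---which carries the subspace topology $\tau_{cr}$---this homeomorphism restricts to $\phi$ and maps onto $(\mathrm{Glimm}(A) \times \mathrm{Glimm}(B), \tau_p) \subseteq \beta\mathrm{Glimm}(A) \times \beta\mathrm{Glimm}(B)$. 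Hence $\phi$ is a homeomorphism, i.e. $\tau_{cr} = \tau_p$ on $\mathrm{Glimm}(A) \times \mathrm{Glimm}(B)$.
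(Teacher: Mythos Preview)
Your proposal is correct and follows essentially the same strategy as the paper's proof: verify commutativity of the diagram on elementary tensors via the Dauns--Hofmann characterisation at points $\Phi(P,Q)$, then reduce the question to surjectivity of $(\phi^{\beta})^{\ast}$ and hence to injectivity of $\phi^{\beta}$. Your organisation differs only cosmetically---you factor $\mu_A(f)\otimes\mu_B(g)$ and argue via density plus continuity, whereas the paper shows $(z_1-z_2)(a\otimes b)\in\bigcap_{(P,Q)}\Phi(P,Q)=\{0\}$ directly---but the substance is identical.
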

\begin{proof}
We first show that the preceding diagram commutes; that is, for any $h \in C( \beta \mathrm{Glimm}(A) \times \beta \mathrm{Glimm}(B) )$ the multipliers $z_1$ and $z_2$ of $A \otimes_{\alpha} B $ given by
\[
z_1 = ( \mu_{\alpha} \circ \psi^{\ast} \circ ( \phi^{\beta} )^{\ast} ) (h), \ z_2 = \left( ( \mu_A \otimes \mu_B ) \circ \nu \right) (h)
\]
are equal.  By linearity and continuity it suffices to check equality for functions of the form $h = \nu^{-1} (f \otimes g)$, where $f \in C ( \beta \mathrm{Glimm}(A) )$, $g \in C ( \beta \mathrm{Glimm}(B) )$.

Consider an elementary tensor $a \otimes b \in A \otimes_{\alpha} B $, and a pair $(P,Q) \in \mathrm{Prim} (A) \times \mathrm{Prim}(B)$. We will show that $(z_1 - z_2 ) (a \otimes b ) + \Phi (P,Q) = 0$.  Set $(p,q) = ( \rho_A \times \rho_B )(P,Q)$, and note that by Theorem~\ref{t:homeo} $(\psi \circ \rho_{\alpha} \circ \Phi )(P,Q) = (p,q)$. In particular it follows that
\[
( \psi^{\ast} \circ (\phi^{\beta} )^{\ast} )(h) (\rho_{\alpha} ( \Phi (P,Q) ) ) = (\phi^{\beta})^{\ast} (h) ( \psi \circ \rho_{\alpha} \circ \Phi (P,Q) ) = h (p,q) = f(p)g(q).
\]

Firstly by applying the $\ast$-isomorphism of Corollary~\ref{c:dh} to the element $( \psi^{\ast} \circ (\phi^{\beta})^{\ast} )(h)$ of $C( \beta ( \mathrm{Glimm}(A \otimes_{\alpha} B))$ see that
\begin{eqnarray*}
z_1 (a \otimes b) + \Phi (P,Q) & = & ( \mu_{\alpha} \circ \psi^{\ast} \circ ( \phi^{\beta} )^{\ast} ) (h) (a \otimes b ) + \Phi (P,Q) \\
 & = & f(p) g(q) \left( a \otimes b  \right) + \Phi (P,Q)\\
 & = & f(p) a \otimes g(q) b + \Phi (P,Q).
\end{eqnarray*}

On the other hand, applying Corollary~\ref{c:dh} to $f \in C( \beta \mathrm{Glimm}(A) )$ gives $\mu_A (f) a - f(p)a \in P$, so that
\[
(\mu_A (f) a - f(p) a ) \otimes g(q) b = \mu_A (f) a \otimes g(q) b - f(p)a \otimes g(q) b \in \mathrm{ker} (q_P \otimes q_Q ) = \Phi (P,Q),
\] 
from which it follows that $ \mu_A (f) a \otimes g(q) b + \Phi (P,Q) = f(p)a \otimes g(q)b + \Phi (P,Q)$.  A similar argument applied to $B$ gives
 $\mu_A (f) a \otimes g(q)b + \Phi (P,Q) = \mu_A (f) a \otimes \mu_B (g) b + \Phi (P,Q)$, and we conclude that
\begin{eqnarray*}
z_1 ( a \otimes b ) + \Phi (P,Q) & = & \mu_A (f) a \otimes \mu_B (g) b + \Phi (P,Q) \\
& = & (\mu_A \otimes \mu_B )(f \otimes g ) (a \otimes b ) + \Phi (P,Q) \\
& = & (\mu_A \otimes \mu_B \circ \nu )(h )(a \otimes b ) + \Phi (P,Q) \\
& = & z_2 (a \otimes b ) + \Phi (P,Q).
\end{eqnarray*}

 In particular $(z_1 - z_2)( a \otimes b ) \in \Phi (P,Q)$ for all $a \in A, b \in B$ and $(P,Q) \in \mathrm{Prim}(A) \times \mathrm{Prim}(B)$.  Since $\bigcap \{ \Phi (P,Q) : (P, Q ) \in \mathrm{Prim}(A) \times \mathrm{Prim}(B) \} = \{ 0 \}$, it follows that $(z_1 - z_2 ) (a \otimes b ) = 0$ for all $a \in A, b \in B$.  Thus $(z_1 - z_2 ) \left( A \otimes_{\alpha} B \right) = \{ 0 \}$, that is, $z_1 = z_2$.

Since the vertical arrows of the diagram all describe $\ast$-isomorphisms, the inclusion $ZM(A) \otimes ZM(B) \subseteq ZM(A \otimes_{\alpha} B)$ will be surjective if and only if $(\phi^{\beta})^{\ast}$ is.  By~\cite[Theorem 10.3]{gill_jer},  $(\phi^{\beta})^{\ast}$ is surjective if and only if $\phi^{\beta}$ is a homeomorphism.  

But then $\phi^{\beta}$, being a continuous surjection from a compact Hausdorff space to a Hausdorff space, is thus a homeomorphism if and only if it is injective.

The final assertion follows from the fact that $\phi^{\beta}$ is the identity on $\mathrm{Glimm}(A) \times \mathrm{Glimm} (B)$.

\end{proof}

Let $X$ and $Y$ be completely regular spaces.  The question of establishing conditions on $X,Y$ and $X \times Y$ for which canonical surjection $\iota^{\beta} : \beta(X \times Y) \rightarrow \beta X \times \beta Y$ is injective (and hence a homeomorphism) has been studied by several authors.  If either $X$ or $Y$ is finite, then this is trivially true. The most well-known characterisation  in the infinite case is due to Glicksberg~\cite{glicksberg}.

\begin{definition}
Let $X$ be a completely regular space.  We say that $X$ is \emph{pseudocompact} if every $f \in C(X)$ is bounded.
\end{definition}

%remarks about pseudocompactness

Glicksberg's Theorem~\cite[Theorem 1]{glicksberg} states that, for infinite completely regular spaces $X$ and $Y$ the canonical map $\beta(X \times Y) \rightarrow \beta X \times \beta Y$ is a homeomorphism if and only if $X \times Y$ is pseudocompact.

\begin{theorem}
\label{p:zma}
For any C$^{\ast}$-algebras $A$ and $B$, $ZM(A) \otimes ZM(B) = ZM(A \otimes_{\alpha} B)$ if and only if one of the following conditions hold:
\begin{enumerate}
\item[(i)] $\mathrm{Glimm}(A)$ or $\mathrm{Glimm} (B)$ is finite, or
\item[(ii)] $\tau_p = \tau_{cr}$ on $\mathrm{Glimm}(A) \times \mathrm{Glimm} (B)$ and $\mathrm{Glimm}(A) \times \mathrm{Glimm} (B)$ is pseudocompact.
\end{enumerate}
\end{theorem}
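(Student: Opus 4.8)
The plan is to reduce the statement entirely to Corollary~\ref{c:phibeta} together with Glicksberg's Theorem, handling the finite and infinite cases separately. Throughout write $X = \mathrm{Glimm}(A)$ and $Y = \mathrm{Glimm}(B)$, both completely regular. By Corollary~\ref{c:phibeta} the equality $ZM(A) \otimes ZM(B) = ZM(A \otimes_{\alpha} B)$ holds exactly when $\phi^{\beta} : \beta(X \times Y, \tau_{cr}) \to \beta X \times \beta Y$ is injective; as $\phi^{\beta}$ is a continuous surjection of compact Hausdorff spaces, this is the same as $\phi^{\beta}$ being a homeomorphism, and by the same Corollary it forces $\tau_{p} = \tau_{cr}$ on $X \times Y$. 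Conversely, once $\tau_{p} = \tau_{cr}$ the map $\phi$ is the identity, so $\phi^{\beta}$ coincides with the Glicksberg map $\iota^{\beta} : \beta(X \times Y, \tau_{p}) \to \beta X \times \beta Y$. Hence the desired equality holds if and only if both $\tau_{p} = \tau_{cr}$ and $\iota^{\beta}$ is a homeomorphism.

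I would first settle case (i). Suppose, say, that $Y = \mathrm{Glimm}(B)$ is finite; being completely regular it is discrete, so for $Q \in \mathrm{Prim}(B)$ with $\rho_{B}(Q) = q$ the singleton $\{q\}$ is a cozero set, and $U := \rho_{B}^{-1}(\{q\}) = \mathrm{hull}(H_{q})$ is a clopen cozero neighbourhood of $Q$ equal to the single $\approx$-class $[Q]$. Because $U$ is clopen, every $f \in C^{b}(U)$ extends (by a constant on the clopen complement) to some $\tilde{f} \in C^{b}(\mathrm{Prim}(B))$, which is constant on the $\approx$-class $U$; hence $C^{b}(U) = \mathbb{C}$, the topology $\tau_{U}$ is indiscrete, and $\overline{U} = U$ is trivially w-compact. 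Thus every point of $\mathrm{Prim}(B)$ has a cozero neighbourhood with w-compact closure, and Theorem~\ref{t:ishii} gives $\rho(\mathrm{Prim}(A) \times \mathrm{Prim}(B)) = \mathrm{Glimm}(A) \times \mathrm{Glimm}(B)$, i.e. $\tau_{p} = \tau_{cr}$. Since $Y$ is finite, $\iota^{\beta}$ is a homeomorphism, so by the first paragraph the equality holds. This shows that (i) implies the equality.

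It remains to treat the case where $X$ and $Y$ are both infinite, in which (i) fails and the right-hand alternative reduces to (ii). By the first paragraph the equality is equivalent to the conjunction of $\tau_{p} = \tau_{cr}$ with $\iota^{\beta}$ being a homeomorphism; and by Glicksberg's Theorem, for infinite completely regular $X$ and $Y$ the map $\iota^{\beta}$ is a homeomorphism precisely when $(X \times Y, \tau_{p})$ is pseudocompact. Hence here the equality holds exactly when $\tau_{p} = \tau_{cr}$ and $X \times Y$ is pseudocompact, which is condition (ii). Combining the two cases yields the equivalence with (i) or (ii).

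The only genuine obstacle is the verification in case (i) that $\tau_{p} = \tau_{cr}$ when one Glimm space is finite: this is not covered by Proposition~\ref{p:wcpct} (no $\sigma$-unitality is assumed), and the crux is the observation that a clopen single-$\approx$-class piece of $\mathrm{Prim}(B)$ carries the indiscrete $\tau$-topology and is therefore w-compact, which lets us invoke Ishii's Theorem~\ref{t:ishii}. Everything else is bookkeeping around Corollary~\ref{c:phibeta} and Glicksberg's Theorem.
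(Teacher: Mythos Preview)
Your proof is correct and follows essentially the same plan as the paper's: reduce to Corollary~\ref{c:phibeta}, split into the finite/infinite cases, and invoke Glicksberg's Theorem in the infinite case.

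The one place you diverge is your treatment of case~(i), and here your claim that ``this is not covered by Proposition~\ref{p:wcpct} (no $\sigma$-unitality is assumed)'' is mistaken. If $\mathrm{Glimm}(B)$ is finite it is discrete, so every subset is open and $\rho_B$ is trivially an open map; this is exactly hypothesis~(ii) of Proposition~\ref{p:wcpct}, which requires no $\sigma$-unitality. The paper simply notes this and obtains $\tau_p = \tau_{cr}$ in one line, then cites \cite[Proposition~8.2]{walker} for the fact that $\beta(X\times Y)=\beta X\times\beta Y$ when one factor is finite. Your direct argument that each clopen $\approx$-class $U$ has indiscrete $\tau_U$ and is therefore w-compact is correct and self-contained, but it reproves from scratch what Proposition~\ref{p:wcpct}(ii) already gives you.
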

\begin{proof}
If (i) holds, w.l.o.g. $\mathrm{Glimm}(B)$ is finite, hence discrete and compact.  In particular $\rho_B$ is an open map, so by Proposition~\ref{p:wcpct}(ii), $\tau_p = \tau_{cr}$ on $\mathrm{Glimm} (A) \times \mathrm{Glimm} (B)$.  Then by~\cite[Proposition 8.2]{walker} the map $\phi^{\beta}$ is a homeomorphism and hence $ZM(A) \otimes ZM(B) = ZM( A \otimes_{\alpha} B )$ by Corollary~\ref{c:phibeta}.

If $\mathrm{Glimm} (A)$ and $\mathrm{Glimm} (B)$ are infinite then by~\cite[Theorem 1]{glicksberg}, $\beta  \left( ( \mathrm{Glimm}(A) \times \mathrm{Glimm} (B) , \tau_p ) \right)$ is canonically homeomorphic to $\beta \mathrm{Glimm}(A) \times \beta \mathrm{Glimm} (B)$ if and only if $\left( \mathrm{Glimm}(A) \times \mathrm{Glimm} (B) , \tau_p \right)$ is pseudocompact.  Hence in the infinite case, Corollary~\ref{c:phibeta} gives $ZM(A) \otimes ZM(B) = ZM(A \otimes_{\alpha} B)$ if and only if (ii) holds.
\end{proof}

Clearly if $M(A) \otimes_{\alpha} M(B) = M(A \otimes_{\alpha} B )$ then $ZM(A) \otimes ZM(B) = Z( M(A) \otimes_{\alpha} M(B) ) = ZM(A \otimes_{\alpha} B )$.  We will show in Example~\ref{e:zma} that the converse is not true; we construct C$^{\ast}$-algebras $A$ and $B$ such that $ZM(A) \otimes ZM(B) = Z(M(A \otimes_{\alpha} B ) )$, but $M(A) \otimes_{\alpha} M(B) \subsetneq M(A \otimes_{\alpha} B )$.
\begin{remark}
It is easily seen that the continuous image of a pseudocompact space is pseudocompact. In particular if $X$ and $Y$ are completely regular spaces such that $X \times Y$ is pseudocompact, then since the projection maps $\pi_X$ and $\pi_Y$ are continuous we have necessarily that both $X$ and $Y$ are pseudocompact.

In the other direction, it is not always true that a product of pseudocompact spaces is pseudocompact; see~\cite[Example 9.15]{gill_jer} for a counterexample.  However, for a product of pseudocompact spaces $X$ and $Y$, one of which is also locally compact, then $X \times Y$ is pseudocompact  by~\cite[Proposition 8.21]{walker}.

\end{remark}
\begin{remark}

In the particular case that $A$ and $B$ are unital, then $\mathrm{Prim}(A)$ and $\mathrm{Prim}(B)$ are compact so that $\tau_{cr}= \tau_p$ on $\mathrm{Glimm}(A) \times \mathrm{Glimm} (B)$ by Proposition~\ref{p:wcpct}(i). Moreover, $\mathrm{Glimm}(A) \times \mathrm{Glimm}(B)$ is compact, hence pseudocompact.  Thus Theorem~\ref{p:zma} implies the Haydon-Wassermann result~\cite[Corollary 1]{hay_wass} in the unital case.
\end{remark}

\section{Glimm  ideals}

We now turn to the question of determining the  Glimm ideals of $A \otimes_{\alpha} B$ in terms of those of$A$ and $B$.  More precisely Theorem~\ref{t:homeo2} shows that, when the Glimm spaces considered as sets of ideals of $A$,$B$ and $A \otimes_{\alpha} B$, then the map $\Delta$ of Equation~(\ref{e:delta}) satisfies $\Delta = \psi^{-1}$. 

We define a new map $\Psi:\mathrm{Id}' (A \otimes_{\alpha} B) \rightarrow \mathrm{Id}'(A) \times \mathrm{Id}'(B)$, which is  a left inverse of the map $\Phi$ of Equation~(\ref{e:phi}).  
For $M \in \mathrm{Id} '( A \otimes_{\alpha} B ) $ we define closed two-sided ideals $M^A$ and $M^B$ of $A$ and $B$ respectively via
\begin{equation}
\label{e:psi}
M^A = \{ a \in A : a \otimes B \subseteq M \}, M^B = \{ b \in B : A \otimes b \subseteq M \}.
\end{equation}
The assignment $\Psi (M) = (M^A , M^B )$ gives a map $\Psi : \mathrm{Id}' ( A \otimes_{\alpha} B ) \rightarrow \mathrm{Id}' (A) \times \mathrm{Id}'(B)  $.    

\begin{proposition}
\label{p:psi}
Let $A$ and $B$ be C$^{\ast}$-algebras and $A \otimes_{\alpha} B$ their minimal C$^{\ast}$-tensor product.  Then the map $\Psi: \mathrm{Id}'(A \otimes_{\alpha} B ) \rightarrow \mathrm{Id}'(A) \times \mathrm{Id}' (B)$ satisfies the following properties:
\begin{enumerate}
\item[(i)] $\Psi \circ \Phi$ is the identity on $\mathrm{Id}'(A) \times \mathrm{Id}' (B)$,
\item[(ii)] $\Psi ( \mathrm{Fac} (A \otimes_{\alpha} B ) ) = \mathrm{Fac}(A) \times \mathrm{Fac} (B)$, 
\item[(iii)] The restriction of $\Psi$ to $\mathrm{Fac}(A \otimes_{\alpha} B )$ is continuous in the hull-kernel topologies,
\item[(iv)] For any $M \in \mathrm{Fac}(A \otimes_{\alpha} B)$, the inclusion $M \subseteq \Phi \circ \Psi (M)$ holds.
\end{enumerate}
\end{proposition}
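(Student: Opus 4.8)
The plan is to treat the four parts in the order (i), (iii), (ii), (iv), proving the first three by direct arguments and reserving the real work for (iv). For (i), fix $(I,J)\in\mathrm{Id}'(A)\times\mathrm{Id}'(B)$ and recall $\Phi(I,J)=\ker(q_I\otimes q_J)$. An elementary tensor $a\otimes b$ lies in $\ker(q_I\otimes q_J)$ if and only if $q_I(a)\otimes q_J(b)=0$ in $(A/I)\otimes_\alpha(B/J)$; since $J$ is proper, $B/J\neq\{0\}$, and in the minimal tensor product $q_I(a)\otimes y=0$ for all $y\in B/J$ forces $q_I(a)=0$. Hence $a\otimes B\subseteq\Phi(I,J)$ iff $a\in I$, giving $\Phi(I,J)^A=I$, and symmetrically $\Phi(I,J)^B=J$, so $\Psi\circ\Phi=\mathrm{id}$ (this is also \cite[Theorem 2.6]{lazar_tensor}).

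For (iii), continuity into a product is checked coordinatewise, and for the hull--kernel topology it suffices that preimages of the subbasic closed sets $\mathrm{hull}_f(I)=\{N\in\mathrm{Fac}(A):N\supseteq I\}$ are closed. The key observation is that $M^A\supseteq I$ holds iff $a\otimes b\in M$ for all $a\in I,\ b\in B$, i.e. iff $I\otimes_\alpha B\subseteq M$. Thus the preimage of $\mathrm{hull}_f(I)$ under $M\mapsto M^A$ equals $\mathrm{hull}_f(I\otimes_\alpha B)\cap\mathrm{Fac}(A\otimes_\alpha B)$, which is closed; the same argument applies to $M\mapsto M^B$, so $\Psi|_{\mathrm{Fac}}$ is continuous.

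For (ii), the inclusion $\mathrm{Fac}(A)\times\mathrm{Fac}(B)\subseteq\Psi(\mathrm{Fac}(A\otimes_\alpha B))$ is immediate from (i) and Proposition~\ref{p:phi}(iii): for $(M_1,M_2)\in\mathrm{Fac}(A)\times\mathrm{Fac}(B)$ we have $\Phi(M_1,M_2)\in\mathrm{Fac}(A\otimes_\alpha B)$ with $\Psi(\Phi(M_1,M_2))=(M_1,M_2)$. For the reverse inclusion, write $M=\ker\pi$ for a nondegenerate factor representation $\pi$ on $H$, extend $\pi$ to $\tilde\pi$ on $M(A\otimes_\alpha B)$, and use the embedding $\Theta$ of Lemma~\ref{l:embed} to define commuting representations $\pi_A(a)=\tilde\pi(\Theta(a\otimes 1))$ and $\pi_B(b)=\tilde\pi(\Theta(1\otimes b))$ of $A$ and $B$, which satisfy $\pi_A(a)\pi_B(b)=\pi(a\otimes b)$. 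A short computation using nondegeneracy yields $\ker\pi_A=M^A$ and $\ker\pi_B=M^B$. Writing $\mathcal{A}=\pi_A(A)''$, $\mathcal{B}=\pi_B(B)''$ and $\mathcal{M}=\pi(A\otimes_\alpha B)''$, the algebras $\mathcal{A},\mathcal{B}$ commute and generate the factor $\mathcal{M}$; if $z\in\mathcal{Z}(\mathcal{A})$ then $z$ commutes with both $\mathcal{A}$ and $\mathcal{B}$, so $z\in\mathcal{M}\cap\mathcal{M}'=\mathbb{C}1$. Hence $\mathcal{A}$ is a factor, $M^A=\ker\pi_A\in\mathrm{Fac}(A)$, and likewise $M^B\in\mathrm{Fac}(B)$.

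For (iv), keep the notation of (ii), and note first that $M^A\otimes_\alpha B\subseteq M$ and $A\otimes_\alpha M^B\subseteq M$, so $\Delta(M^A,M^B)\subseteq M$; the content of the statement is the reverse-type inclusion $M\subseteq\Phi(M^A,M^B)=\ker(q_{M^A}\otimes q_{M^B})$. Factoring $\pi_A=\bar\pi_A\circ q_{M^A}$ and $\pi_B=\bar\pi_B\circ q_{M^B}$ through the faithful quotient representations, and using that the spatial tensor product of faithful representations is faithful, one identifies $\Phi(M^A,M^B)=\ker(\pi_A\otimes\pi_B)$, where $\pi_A\otimes\pi_B$ is the external (minimal) tensor representation on $H\otimes H$; thus $M\subseteq\Phi(M^A,M^B)$ is exactly the weak containment $\pi_A\otimes\pi_B\prec\pi$. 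Via the slice--map property of the minimal tensor product this is equivalent to showing that every product state $\phi\otimes\psi$, with $\phi$ a state of $A$ annihilating $M^A$ and $\psi$ a state of $B$ annihilating $M^B$, annihilates $M$. I expect this to be the main obstacle: the inclusion $\Delta\subseteq\Phi$ can be strict when $B$ is non-exact, and the naive C$^{\ast}$-factorisation arguments do not produce the required direction, so the factoriality of $\mathcal{M}$ must be exploited decisively---through the structure of its commuting generating subalgebras $\mathcal{A},\mathcal{B}$---to manufacture the product states in the folium of $\pi$. The relevant analysis of factor representations of a minimal tensor product is carried out in \cite{guichardet}, and the key inclusion can also be extracted from \cite[Lemma 2.13]{blanch_kirch}; combined with Proposition~\ref{p:phi}(v) this gives $M\subseteq\Phi(M^A,M^B)$.
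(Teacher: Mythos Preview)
Your arguments for (i), (ii), and (iii) are correct and more self-contained than the paper's: the paper simply cites \cite[Theorem 2.6]{lazar_tensor} for (i) and (iii), and \cite[Proposition 1]{guichardet} for the forward inclusion in (ii), whereas you spell out the underlying computations (the slice/factorisation argument for (i), the $\mathrm{hull}_f(I\otimes_\alpha B)$ description for (iii), and the Guichardet-style centre calculation for (ii)). This is fine and arguably more informative.

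For (iv), however, your route is more circuitous than necessary and does not quite close. You set up a weak-containment reformulation via product states, flag it as the main obstacle, and then fall back on citing \cite{guichardet} and \cite[Lemma 2.13]{blanch_kirch} together with Proposition~\ref{p:phi}(v); but you never say \emph{why} \cite[Lemma 2.13]{blanch_kirch} applies to a factorial ideal, and the invocation of Proposition~\ref{p:phi}(v) is unmotivated. The paper's proof is a two-line observation that bypasses the weak-containment machinery entirely: \cite[Lemma 2.13(iv)]{blanch_kirch} gives $M\subseteq\Phi\circ\Psi(M)$ for any \emph{prime} ideal $M$ of $A\otimes_\alpha B$, and every factorial ideal of a C$^{\ast}$-algebra is prime \cite[Proposition II.6.1.11]{black}. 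Once you note ``factorial $\Rightarrow$ prime'', the product-state discussion becomes unnecessary and Proposition~\ref{p:phi}(v) plays no role.
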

\begin{proof}
(i) and (iii) are shown in the proof of~\cite[Theorem 2.6]{lazar_tensor}.  To prove (iii),~\cite[Proposition 1]{guichardet} shows that $\Psi( \mathrm{Fac}(A \otimes_{\alpha} B) \subseteq \mathrm{Fac} (A) \times \mathrm{Fac} (B)$.  Surjectivity then follows from Proposition~\ref{p:phi}(iii) and part (ii).

As for (iv), it is shown in~\cite[Lemma 2.13(iv)]{blanch_kirch} that for any prime ideal $M$ of $A \otimes_{\alpha} B$ we have $M \subseteq \Phi \circ \Psi (M)$. But then~\cite[Proposition II.6.1.11]{black} shows that every factorial ideal of a C$^{\ast}$-algebra is prime, from which (iv) follows.

\end{proof}

We remark that Proposition~\ref{p:psi} (ii) shows that $\Psi$ maps $\mathrm{Prim} ( A \otimes_{\alpha} B )$ to $\mathrm{Fac} (A) \times \mathrm{Fac} (B) $. It is not known in general whether $\Psi$ maps $\mathrm{Prim} (A \otimes_{\alpha} B )$ onto $\mathrm{Prim} (A) \times \mathrm{Prim} (B)$.  For this reason, we will need to use an alternative construction of the space of Glimm ideals of a C$^{\ast}$-algebra, which was first considered by Kaniuth in~\cite{kaniuth}.  

It is shown in~\cite[Section 2]{kaniuth} how for any C$^{\ast}$-algebra $A$, $\mathrm{Glimm} (A)$ can be constructed as $\rho \left( \mathrm{Fac} (A) \right)$.  For $I,J \in \mathrm{Fac} (A)$ we write $I \approx_f J$ if $f(I)=f(J)$ for all $f \in C^b ( \mathrm{Fac} (A) )$, and denote by $[I]_f$ the equivalence class of $I$ in $\mathrm{Fac} (A)$.  

\begin{proposition}
\label{p:approxf}
Let $A$ be a C$^{\ast}$-algebra.  Then the relation $\approx_f$ on $\mathrm{Fac}(A)$ has the following properties:
\begin{enumerate}
\item[(i)] For $I \in \mathrm{Fac}(A)$ and $P \in \mathrm{hull} (I)$ we have
\[
[I]_f \cap \mathrm{Prim}(A) = [P] \mbox{ and } k \left( [I]_f \right) = k \left( [P] \right),
\]
\item[(ii)] $\mathrm{Fac}(A) / \approx_f $ is homeomorphic to $\mathrm{Prim}(A) / \approx$ via the map $[I]_f \mapsto [P]$, where $P \in \mathrm{hull} (I)$, when both spaces are considered with the quotient topology,
\item[(iii)] Each Glimm ideal of $A$ is of the form $G_I = k \left( [I]_f \right)$ for some $I \in \mathrm{Fac}(A)$.
\item[(iv)] The equivalence classes of $\approx_f$ satisfy
\[
[I]_f = \mathrm{hull}_f (G_I ).
\]
\end{enumerate}
\end{proposition}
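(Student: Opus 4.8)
The whole argument rests on a single observation, which I would establish first: for every $I \in \mathrm{Fac}(A)$, each $f \in C^b(\mathrm{Prim}(A))$ is constant on $\mathrm{hull}(I)$. Indeed, a factorial ideal is prime by~\cite[Proposition II.6.1.11]{black}, so $\mathrm{hull}(I)$, being homeomorphic to $\mathrm{Prim}(A/I)$, is an irreducible closed subspace of $\mathrm{Prim}(A)$; and any continuous map from an irreducible space into the Hausdorff space $\mathbb{C}$ is constant, whence $P \approx Q$ whenever $P, Q \in \mathrm{hull}(I)$. Alongside this I would use three standard facts: that $k(\mathrm{hull}(I)) = I$ for every closed ideal $I$; that consequently the hull-kernel closure of $\mathrm{hull}(I)$ in $\mathrm{Fac}(A)$ is $\mathrm{hull}_f(I)$, so that $\mathrm{Prim}(A)$ is dense in $\mathrm{Fac}(A)$; and that restriction is a $\ast$-isomorphism $C^b(\mathrm{Fac}(A)) \to C^b(\mathrm{Prim}(A))$, which is precisely the identification $\mathrm{Glimm}(A) = \rho(\mathrm{Fac}(A))$ of~\cite[Section 2]{kaniuth}.

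For (i), fix $P \in \mathrm{hull}(I)$. If $g \in C^b(\mathrm{Fac}(A))$ and $f = g \restriction_{\mathrm{Prim}(A)}$, then $f$ is constant with value $f(P)$ on $\mathrm{hull}(I)$ by the observation, and since $g$ is continuous and $\mathrm{hull}(I)$ is dense in $\mathrm{hull}_f(I) \ni I$, this forces $g(I) = f(P)$. Hence for $Q \in \mathrm{Prim}(A)$ we have $Q \approx_f I$ if and only if $g(Q) = g(I) = f(P)$ for all such $g$, equivalently $f(Q) = f(P)$ for all $f \in C^b(\mathrm{Prim}(A))$ (using that every such $f$ extends to some $g$), equivalently $Q \approx P$; this gives $[I]_f \cap \mathrm{Prim}(A) = [P]$. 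For the kernel identity, the inclusion $[P] \subseteq [I]_f$ yields $k([I]_f) \subseteq k([P])$, while for any $M \in [I]_f$ the observation applied to $M$ forces $\mathrm{hull}(M) \subseteq [M]_f \cap \mathrm{Prim}(A) = [P]$, so $M = k(\mathrm{hull}(M)) \supseteq k([P])$; thus $k([I]_f) = k([P])$. I expect this part, and specifically the reduction to the irreducibility/constancy lemma, to be the crux, since it is what ties the two inseparability relations $\approx$ and $\approx_f$ together.

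Parts (iii) and (iv) then follow quickly. Taking $I = P$ primitive in (i) gives $G_p = k([P]) = k([P]_f)$, which is (iii). For (iv), the inclusion $[I]_f \subseteq \mathrm{hull}_f(G_I)$ is immediate from $G_I = k([I]_f)$; conversely, if $M \in \mathrm{hull}_f(G_I)$ then $M \supseteq k([P])$, so any $R \in \mathrm{hull}(M)$ lies in $\mathrm{hull}(k([P])) = [P] = [I]_f \cap \mathrm{Prim}(A)$, and since $R \approx_f M$ by (i) applied to $M$ we conclude $M \approx_f R \approx_f I$, that is $M \in [I]_f$.

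For (ii), the assignment $\Lambda([I]_f) = [P]$ (with $P \in \mathrm{hull}(I)$) is well-defined and bijective from (i) and (iv): equal images force equal kernels by (i), hence equal classes by (iv). Continuity of $\Lambda^{-1}$ is formal, since precomposing it with the quotient map $\mathrm{Prim}(A) \to \mathrm{Prim}(A) / \approx$ turns it into the inclusion $\mathrm{Prim}(A) \hookrightarrow \mathrm{Fac}(A)$ followed by the quotient map onto $\mathrm{Fac}(A) / \approx_f$. The substantive point is continuity of $\Lambda$: given an open $\approx$-saturated $V \subseteq \mathrm{Prim}(A)$ with complement $\mathrm{hull}(K)$, I must show $\{ I \in \mathrm{Fac}(A) : \mathrm{hull}(I) \subseteq V \}$ is hull-kernel open. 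Here the observation re-enters decisively: if $P \in \mathrm{hull}(I) \cap \mathrm{hull}(K)$ then $\mathrm{hull}(I) \subseteq [P] \subseteq \mathrm{hull}(K)$ — the last inclusion because $\mathrm{hull}(K)$ is saturated — whence $I \supseteq K$, so the complement of that set is exactly $\mathrm{hull}_f(K)$, which is closed. Besides part (i), this saturation identity $\{ I : \mathrm{hull}(I) \cap \mathrm{hull}(K) \neq \emptyset \} = \mathrm{hull}_f(K)$ is the only point I expect to require genuine care.
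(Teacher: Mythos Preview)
Your argument is correct. The paper itself does not actually prove (i) and (ii): it simply cites \cite[Lemma 2.2]{kaniuth}, derives (iii) as immediate from (i), and proves (iv) in one line. So your write-up is considerably more self-contained than the paper's, and the irreducibility observation you isolate --- that $\mathrm{hull}(I)$ is irreducible for $I$ factorial, hence any continuous $\mathbb{C}$-valued function is constant on it --- is precisely the mechanism underlying Kaniuth's lemma.

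The one place where the paper is more economical is (iv). You argue the inclusion $\mathrm{hull}_f(G_I) \subseteq [I]_f$ by chasing an element $M$ through $\mathrm{hull}(M) \subseteq [P]$ and back via (i). The paper instead observes directly that $[I]_f$ is a closed subset of $\mathrm{Fac}(A)$ (being the intersection over $g \in C^b(\mathrm{Fac}(A))$ of the closed sets $g^{-1}(g(I))$), and then hull-kernel gives $[I]_f = \mathrm{hull}_f\bigl(k([I]_f)\bigr) = \mathrm{hull}_f(G_I)$ in a single stroke. Your route works, but the closedness argument is shorter and avoids re-invoking (i).
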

\begin{proof}
Parts (i) and (ii) are shown in~\cite[Lemma 2.2]{kaniuth}.  (iii) is immediate from (i).

To prove (iv) take $I \in \mathrm{Fac}(A)$.  It follows from the definition of $\approx_f$ that the equivalence class $[I]_f$ is a closed subset of $\mathrm{Fac}(A)$.  By the definition of the hull-kernel topology and by part (iii) we then have
\[
[I]_f = \mathrm{hull}_f \left( k([I]_f) \right) = \mathrm{hull}_f (G_I).
\]
\end{proof}

As a consequence of Proposition~\ref{p:approxf}(ii), we shall consider the set of equivalence classes $\mathrm{Fac}(A) / \approx_f$ as $\mathrm{Glimm}(A)$, and denote by $\rho_{A}^f : \mathrm{Fac}(A) \rightarrow \mathrm{Glimm}(A)$ the corresponding quotient map.  Moreover, we may unambiguously speak of the quotient topology $\tau_q$ on $\mathrm{Glimm}(A)$ as the strongest topology on this space for for which either $\rho_A$ or $\rho_A^f$ is continuous.

For C$^{\ast}$-algebras $A$ and $B$ and two pairs of ideals $(P,Q),(R,S) \in \mathrm{Fac} (A) \times \mathrm{Fac} (B)$, we will write $(P,Q) \approx_f (R,S)$ when $g (P,Q) = g (R,S)$ for all $g \in C^b ( \mathrm{Fac} (A) \times \mathrm{Fac} (B) )$. By Lemma~\ref{l:prodcr}, this is equivalent to saying $P \approx_f R$ and $Q \approx_f S$.

Lemmas~\ref{l:phiglimm},~\ref{l:phicircpsi} and Proposition~\ref{p:hullf} below relate equivalence classes of the relation $\approx_f$ in $\mathrm{Fac}(A) \times \mathrm{Fac} (B)$ with those in $\mathrm{Fac} (A \otimes_{\alpha} B )$, via the maps $\Phi$ and $\Psi$.
\begin{lemma}
\label{l:phiglimm}
Let $(I,J) \in \mathrm{Fac} (A) \times \mathrm{Fac} (B)$, and denote by $G_I =k \left( [I]_f \right)$ and $G_J = k \left( [J]_f \right)$ the corresponding Glimm ideals of $A$ and $B$ respectively.  Then $G_{\Phi(I,J)}:= k \left( [ \Phi(I,J) ]_f \right)$, the Glimm ideal of $A \otimes_{\alpha} B$ corresponding to $[\Phi (I,J)]_f$, satisfies $ G_{\Phi (I,J)} \subseteq \Phi (G_I,G_J) $.
\end{lemma}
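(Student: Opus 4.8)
The plan is to reduce the asserted inclusion to a statement about membership in the single $\approx_f$-equivalence class $[\Phi(I,J)]_f$, and to transport the relation $\approx_f$ across the map $\Phi$ using its continuity on the factorial spaces. By Proposition~\ref{p:phi}(v) applied to the pair $(G_I, G_J)$,
\[
\Phi(G_I, G_J) = \bigcap \{ \Phi(P',Q') : (P',Q') \in \mathrm{hull}(G_I) \times \mathrm{hull}(G_J) \}.
\]
Since $G_{\Phi(I,J)} = k([\Phi(I,J)]_f) = \bigcap \{ M \in \mathrm{Fac}(A \otimes_\alpha B) : M \approx_f \Phi(I,J) \}$, it therefore suffices to show that each $\Phi(P', Q')$ appearing in this intersection contains $G_{\Phi(I,J)}$; and for this it is enough to verify that $\Phi(P', Q') \in [\Phi(I,J)]_f$, that is, $\Phi(P', Q') \approx_f \Phi(I,J)$ in $\mathrm{Fac}(A \otimes_\alpha B)$. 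Here $\Phi(P', Q')$ lies in $\mathrm{Prim}(A \otimes_\alpha B) \subseteq \mathrm{Fac}(A \otimes_\alpha B)$ by Proposition~\ref{p:phi}(ii), so the relation $\approx_f$ is meaningful, and membership in $[\Phi(I,J)]_f = \mathrm{hull}_f(G_{\Phi(I,J)})$ (Proposition~\ref{p:approxf}(iv)) indeed forces $\Phi(P',Q') \supseteq G_{\Phi(I,J)}$.

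First I would identify the index set. Choosing $P \in \mathrm{hull}(I)$ and $Q \in \mathrm{hull}(J)$, Proposition~\ref{p:approxf}(i) gives $G_I = k([P])$ and $G_J = k([Q])$, whence $\mathrm{hull}(G_I) = [P]$ and $\mathrm{hull}(G_J) = [Q]$ (the $\approx$-classes in $\mathrm{Prim}(A)$ and $\mathrm{Prim}(B)$ respectively). Moreover the same proposition yields $[P] \subseteq [I]_f$ and $[Q] \subseteq [J]_f$, so that any $(P', Q') \in [P] \times [Q]$ satisfies $P' \approx_f I$ and $Q' \approx_f J$; by the equivalence noted before the lemma (a consequence of Lemma~\ref{l:prodcr}), this is exactly $(P', Q') \approx_f (I, J)$ in $\mathrm{Fac}(A) \times \mathrm{Fac}(B)$.

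The final and key step is to push this relation forward through $\Phi$. Given any $h \in C^b(\mathrm{Fac}(A \otimes_\alpha B))$, the continuity of $\Phi$ on $\mathrm{Fac}(A) \times \mathrm{Fac}(B)$ from Proposition~\ref{p:phi}(iii) ensures that $h \circ \Phi \in C^b(\mathrm{Fac}(A) \times \mathrm{Fac}(B))$. Applying the relation $(P', Q') \approx_f (I, J)$ to this function gives $h(\Phi(P', Q')) = (h \circ \Phi)(P', Q') = (h \circ \Phi)(I, J) = h(\Phi(I, J))$. As $h$ was arbitrary, $\Phi(P', Q') \approx_f \Phi(I, J)$, so $\Phi(P', Q') \supseteq G_{\Phi(I,J)}$; intersecting over all $(P', Q') \in \mathrm{hull}(G_I) \times \mathrm{hull}(G_J)$ then yields $\Phi(G_I, G_J) \supseteq G_{\Phi(I,J)}$, as required. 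I expect the main subtlety to be precisely this transport step: it is important to observe that only the continuity of $\Phi$ on the factorial spaces (used to pull back bounded continuous functions), rather than the full homeomorphism onto its image, drives the argument, and that $\Phi(P',Q')$ must be recognised as a factorial (indeed primitive) ideal so that $\approx_f$ applies to it.
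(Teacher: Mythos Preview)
Your proof is correct and follows essentially the same strategy as the paper's: both express $\Phi(G_I,G_J)$ via Proposition~\ref{p:phi}(v) as an intersection of ideals $\Phi(P',Q')$ over $(P',Q')\in\mathrm{hull}(G_I)\times\mathrm{hull}(G_J)$, and then show each such $\Phi(P',Q')$ lies in the relevant equivalence class by pulling back bounded continuous functions along the continuous map $\Phi$. The only difference is cosmetic: the paper first replaces $(I,J)$ by a primitive pair $(P,Q)$ and carries out the transport step entirely with $\approx$ on $\mathrm{Prim}$ (using Proposition~\ref{p:phi}(ii)), whereas you compare $\Phi(P',Q')$ directly to the factorial $\Phi(I,J)$ using $\approx_f$ on $\mathrm{Fac}$ (via Proposition~\ref{p:phi}(iii)); the two are equivalent through Proposition~\ref{p:approxf}(i).
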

\begin{proof}
The fact that $\Phi (I,J) \in \mathrm{Fac} (A \otimes_{\alpha} B )$ follows from Proposition~\ref{p:phi} (iii). Taking $(P,Q) \in \mathrm{hull} (I) \times \mathrm{hull} (J)$ we have $\Phi (P,Q) \in \mathrm{hull} \left( \Phi (I,J) \right)$ by Proposition~\ref{p:phi} (i). In this case Proposition~\ref{p:approxf}(i) gives
\[
k([I]_f) = k ( [P] ), k([J]_f)=k([Q])\mbox{ and } k \left( [ \Phi (I,J) ]_f \right) =  k \left( [ \Phi (P,Q) ] \right), 
\]  
and so we may replace $(I,J)$ with $(P,Q)$.

Note that if $(R,S) \in \mathrm{Prim}(A) \times \mathrm{Prim} (B)$ such that $(R,S) \approx (P,Q)$, and $f \in C^b ( \mathrm{Prim}( A \otimes_{\alpha} B)$ then $f \circ \Phi \in C^b ( \mathrm{Prim} (A) \times \mathrm{Prim} (B))$, hence $f(\Phi(R,S))=f( \Phi (P,Q))$, so that $\Phi (R,S) \approx \Phi (P,Q)$.  It then follows from Proposition~\ref{p:phi}(v) that
\begin{eqnarray*}
\Phi \left( G_I,G_J \right) = \Phi \left( k([P]),k([Q]) \right) & = & \bigcap \{ \Phi (R,S): (R,S) \in \mathrm{hull}(k[P]) \times \mathrm{hull}(k[Q]) \} \\
& = & \bigcap \{ \Phi(R,S) :  (R,S) \approx  (P,Q) \} \\
& \supseteq & \bigcap \{ M \in \mathrm{Prim} ( A \otimes_{\alpha} B ) : M \approx \Phi(P,Q) \} \\
&=& k \left( [ \Phi(P,Q) ] \right) =G_{\Phi(I,J)} \\
\end{eqnarray*}
\end{proof}
\begin{lemma}
\label{l:phicircpsi}
For any $M \in \mathrm{Fac}(A \otimes_{\alpha} B)$, $\Phi \circ \Psi (M) \in \mathrm{Fac}(A \otimes_{\alpha} B)$ and $M \approx_f \Phi \circ \Psi (M)$.
\end{lemma}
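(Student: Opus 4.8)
The plan is to treat the two assertions separately: the membership claim $\Phi \circ \Psi (M) \in \mathrm{Fac}(A \otimes_{\alpha} B)$ is pure bookkeeping from the recorded properties of $\Phi$ and $\Psi$, while the equivalence $M \approx_f \Phi \circ \Psi(M)$ rests on a single elementary observation about the hull-kernel topology.

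First I would dispose of the membership claim. By Proposition~\ref{p:psi}(ii) we have $\Psi(M) = (M^A, M^B) \in \mathrm{Fac}(A) \times \mathrm{Fac}(B)$, so $M^A$ and $M^B$ are genuine factorial ideals of $A$ and $B$. Proposition~\ref{p:phi}(iii) then guarantees that $\Phi$ carries $\mathrm{Fac}(A) \times \mathrm{Fac}(B)$ into $\mathrm{Fac}(A \otimes_{\alpha} B)$, whence $\Phi(M^A, M^B) = \Phi \circ \Psi(M)$ is factorial. This also makes $\Phi \circ \Psi(M)$ a legitimate point of the space on which $\approx_f$ is defined, which is needed for the second assertion to be meaningful in the first place.

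For the equivalence $M \approx_f \Phi \circ \Psi(M)$, the key input is Proposition~\ref{p:psi}(iv), which supplies the inclusion $M \subseteq \Phi \circ \Psi(M)$. Since $k(\{M\}) = M$, in the hull-kernel topology the closure of the singleton $\{M\}$ is exactly $\{ N \in \mathrm{Fac}(A \otimes_{\alpha} B) : N \supseteq M \}$; as $\Phi \circ \Psi(M)$ is factorial and contains $M$, it therefore lies in $\overline{\{M\}}$. I would then invoke the purely topological fact that if $y$ lies in the closure of $\{x\}$, then $g(x) = g(y)$ for every continuous $g$ into a Hausdorff space: for any neighbourhood $V$ of $g(y)$ the set $g^{-1}(V)$ is an open neighbourhood of $y$, hence contains $x$, so $g(x) \in V$, and Hausdorffness of the target forces $g(x) = g(y)$. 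Applying this with $x = M$, $y = \Phi \circ \Psi(M)$ and $g$ ranging over $C^b(\mathrm{Fac}(A \otimes_{\alpha} B))$ yields $g(M) = g(\Phi \circ \Psi(M))$ for all such $g$, which is precisely $M \approx_f \Phi \circ \Psi(M)$.

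The only point requiring care — and the sole content beyond citing earlier results — is to get the direction of the containment right against the order-reversing nature of the hull map: one must note that $M \subseteq \Phi \circ \Psi(M)$ places $\Phi \circ \Psi(M)$ in the closure of $\{M\}$ (and not conversely), so that the closure argument runs in the correct direction. I do not expect any substantive obstacle here, since the heavy lifting has already been done in Propositions~\ref{p:psi} and~\ref{p:phi}; in particular the topological observation sidesteps Lemma~\ref{l:phiglimm} entirely, as no explicit comparison of the associated Glimm ideals is required for this step.
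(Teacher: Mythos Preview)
Your proof is correct and follows essentially the same approach as the paper: both invoke Propositions~\ref{p:phi}(iii) and~\ref{p:psi}(ii) for the membership claim, then use Proposition~\ref{p:psi}(iv) to obtain $M \subseteq \Phi \circ \Psi(M)$ and hence $\Phi \circ \Psi(M) \in \overline{\{M\}}$, from which $M \approx_f \Phi \circ \Psi(M)$ follows. The paper compresses the final topological step into a single clause, whereas you spell out explicitly why membership in the closure forces agreement under all bounded continuous functions; this added detail is accurate and the direction of containment is handled correctly.
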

\begin{proof}
The fact that $\Phi \circ \Psi (M) \in \mathrm{Fac}(A \otimes_{\alpha} B)$ follows from Propositions~\ref{p:phi}(iii) and~\ref{p:psi}(ii).  By Proposition~\ref{p:psi}(iv), we have $M \subseteq \Phi \circ \Psi (M)$.  Hence $\Phi \circ \Psi (M) \in \mathrm{hull}_f (M) = \overline{ \{ M \} }$, so that $M \approx_f \Phi \circ \Psi (M)$.

\end{proof}

Note that the proof of Proposition~\ref{p:hullf} below requires that we base the definition of Glimm ideals on the complete regularisation of the space of factorial ideals (since $\Psi$ maps factorial ideals to factorial ideals).

\begin{proposition}
\label{p:hullf}
Let $(I,J) \in \mathrm{Fac} (A) \times \mathrm{Fac} (B) $, $M \in \mathrm{Fac} ( A \otimes_{\alpha} B )$ and denote by $(M^A,M^B)=\Psi (M)$.  Then $M \approx_f \Phi (I,J)$ if and only if $ (M^A,M^B) \approx_f (I,J)$. Hence with $G_I,G_J$ and $G_{\phi (I,J)}$ as defined in Lemma~\ref{l:phiglimm}, we have
\[
M \in \mathrm{hull}_f \left( G_{\Phi(I,J)} \right) \mbox{ if and only if } (M^A,M^B) \in \mathrm{hull}_f (G_I) \times \mathrm{hull}_f (G_J)
\]
\end{proposition}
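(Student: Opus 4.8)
The plan is to reduce everything to the elementary fact that a continuous map between topological spaces automatically respects the inseparability relation: if $h : X \to Y$ is continuous and $x_1 \approx x_2$ in $X$, then for every $g \in C^b(Y)$ we have $g \circ h \in C^b(X)$, whence $g(h(x_1)) = g(h(x_2))$ and so $h(x_1) \approx h(x_2)$ in $Y$. I would apply this principle to the two continuous maps already at hand, namely $\Psi$ restricted to $\mathrm{Fac}(A \otimes_{\alpha} B)$ (Proposition~\ref{p:psi}(ii)--(iii)) and $\Phi$ restricted to $\mathrm{Fac}(A) \times \mathrm{Fac}(B)$ (Proposition~\ref{p:phi}(iii)), together with the product characterisation of $\approx_f$ recorded after Lemma~\ref{l:prodcr} and the left-inverse identity $\Psi \circ \Phi = \mathrm{id}$ of Proposition~\ref{p:psi}(i).

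For the forward implication, I would suppose $M \approx_f \Phi(I,J)$ in $\mathrm{Fac}(A \otimes_{\alpha} B)$. Applying the continuous map $\Psi$ and using that it preserves $\approx_f$ gives $\Psi(M) \approx_f \Psi(\Phi(I,J))$ in $\mathrm{Fac}(A) \times \mathrm{Fac}(B)$; since $\Psi \circ \Phi$ is the identity this reads $(M^A, M^B) \approx_f (I,J)$, as required.

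The backward implication is where the asymmetry between $\Phi$ and $\Psi$ must be handled, and I expect this to be the only step needing real care: because $\Phi \circ \Psi$ is \emph{not} the identity, one cannot simply ``apply $\Phi$'' to invert the previous argument. Instead, supposing $(M^A, M^B) \approx_f (I,J)$, applying the continuous map $\Phi$ yields $\Phi(M^A, M^B) \approx_f \Phi(I,J)$, that is $(\Phi \circ \Psi)(M) \approx_f \Phi(I,J)$. The gap between $M$ and $(\Phi \circ \Psi)(M)$ is bridged precisely by Lemma~\ref{l:phicircpsi}, which gives $M \approx_f (\Phi \circ \Psi)(M)$; transitivity of $\approx_f$ then yields $M \approx_f \Phi(I,J)$. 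This establishes the stated equivalence.

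Finally, for the ``Hence'' clause I would invoke Proposition~\ref{p:approxf}(iv), which identifies each hull with an inseparability class: $\mathrm{hull}_f(G_I) = [I]_f$, $\mathrm{hull}_f(G_J) = [J]_f$ and $\mathrm{hull}_f(G_{\Phi(I,J)}) = [\Phi(I,J)]_f$. Thus $M \in \mathrm{hull}_f(G_{\Phi(I,J)})$ is exactly the condition $M \approx_f \Phi(I,J)$, while $(M^A, M^B) \in \mathrm{hull}_f(G_I) \times \mathrm{hull}_f(G_J)$ says $M^A \approx_f I$ and $M^B \approx_f J$, which by Lemma~\ref{l:prodcr} is precisely $(M^A, M^B) \approx_f (I,J)$. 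The second assertion is therefore a direct restatement of the equivalence proved in the first two directions.
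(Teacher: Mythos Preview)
Your proof is correct. The forward direction and the final ``Hence'' clause are essentially identical to the paper's argument. The backward direction, however, takes a genuinely different route. The paper argues ideal-theoretically: from $(M^A,M^B) \approx_f (I,J)$ it deduces $M^A \supseteq G_I$ and $M^B \supseteq G_J$, then uses monotonicity of $\Phi$ together with Lemma~\ref{l:phiglimm} to obtain $\Phi(M^A,M^B) \supseteq \Phi(G_I,G_J) \supseteq G_{\Phi(I,J)}$, whence $\Phi \circ \Psi(M) \in \mathrm{hull}_f(G_{\Phi(I,J)}) = [\Phi(I,J)]_f$; Lemma~\ref{l:phicircpsi} then finishes as you do. Your argument instead exploits continuity of $\Phi$ on $\mathrm{Fac}(A)\times\mathrm{Fac}(B)$ (Proposition~\ref{p:phi}(iii)) directly to pass from $(M^A,M^B)\approx_f(I,J)$ to $\Phi(M^A,M^B)\approx_f\Phi(I,J)$, bypassing Lemma~\ref{l:phiglimm} altogether. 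This is cleaner and purely functorial; the paper's route is more concrete and makes the underlying ideal containments visible, which ties in with how $G_{\Phi(I,J)}$ is later identified in Corollary~\ref{c:delta}.
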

\begin{proof}
Suppose $M \approx_f \Phi (I,J)$, and take $g \in C^b ( \mathrm{Fac} (A) \times \mathrm{Fac} (B) )$. Using Proposition~\ref{p:psi} (ii) and (iii), we have $g \circ \Psi \in C^b \left( \mathrm{Fac} (A \otimes_{\alpha} B )\right)$. Hence
\[
g(M^A,M^B) = (g \circ \Psi)(M) = (g \circ \Psi ) ( \Phi (I,J) ) = g(I,J), 
\] 
since $\Psi \circ \Phi$ if the identity on $\mathrm{Fac} (A) \times \mathrm{Fac} (B)$ by Proposition~\ref{p:psi} (i).  It follows that $(M^A,M^B) \approx_f (I,J)$.

Since  $G_{\Phi(I,J)}=k \left( [ \Phi(I,J) ]_f \right)$, Proposition~\ref{p:approxf}(iv) shows that $[ \Phi (I,J) ]_f = \mathrm{hull}_f (G_{\Phi (I,J)})$.  Similarly $[I]_f = \mathrm{hull}_f (G_I)$ and $[J]_f = \mathrm{hull}_f (G_J)$.

To prove the converse, suppose that $(M^A, M^B) \approx_f (I,J)$.  Then by Lemma~\ref{l:prodcr}, $M^A \approx_f I$ and $M^B \approx_f J$, so that $M^A \supseteq G_I$ and $M^B \supseteq G_J$. Together with Lemma~\ref{l:phiglimm}, this gives the inclusion
\[
\Phi \circ \Psi (M) = \Phi (M^A,M^B) \supseteq \Phi \left( G_I,G_J \right) \supseteq G_{\Phi (I,J)},
\]
and since by Proposition~\ref{p:phi}(iii) $\Phi \circ \Psi (M) \in \mathrm{Fac} (A \otimes_{\alpha} B)$, it follows from Proposition~\ref{p:approxf}(iv) that $\Phi \circ \Psi (M) \approx_f \Phi (I,J)$.  Then by Lemma~\ref{l:phicircpsi} we have $M \approx_f \Phi (I,J)$.

\begin{comment}
Since $\mathrm{Fac} (A \otimes_{\alpha} B ) \subseteq \mathrm{prime} (A)$,~\cite[Lemma 2.13(iv)]{blanch_kirch} gives $(\Phi \circ \Psi) (M) \supseteq M$.  Hence any open subset of $\mathrm{Fac} ( A \otimes_{\alpha} B)$ containing $(\Phi \circ \Psi) (M)$ must also contain $M$.  In particular, this implies that $M \approx_f (\Phi \circ \Psi) (M) \approx_f \Phi (I,J)$.
\end{comment}
The final assertion of the statement follows from Proposition~\ref{p:approxf}(iv).
\end{proof}

In what follows we make use of the map $\Delta: \mathrm{Id}' (A) \times \mathrm{Id}' (B) \rightarrow \mathrm{Id}' (A \otimes_{\alpha} B )$ defined via~(\ref{e:delta}).  For  $(I,J) \in \mathrm{prime} (A) \times \mathrm{prime} (B)$, $(\Psi \circ \Delta) (I,J)=(I,J)$~\cite[Lemma 2.13(i)]{blanch_kirch}. We will extend this to general $(I,J) \in \mathrm{Id}'(A) \times \mathrm{Id}'(B)$ in Lemma~\ref{l:hulld} below.   On the other hand, if $K \in \mathrm{Id}' ( A \otimes_{\alpha} B )$ then 
\begin{equation}
\label{e:deltacircpsi}
(\Delta \circ \Psi) (K) = K^A \otimes_{\alpha} B + A \otimes_{\alpha} K^B \subseteq K
\end{equation}
by the definition of $K^A$ and $K^B$ in~(\ref{e:psi}).
\begin{lemma}
\label{l:hulld}
Let $I$ and $J$ be proper ideals of $A$ and $B$ respectively.  Then 
\begin{enumerate}
\item[(i)] $ \mathrm{hull}_f \Delta (I,J) =  \Psi^{-1} ( \mathrm{hull}_f (I) \times \mathrm{hull}_f (J) )$.
\item[(ii)] $\Psi \circ \Delta (I,J) = (I,J) $
\end{enumerate}
\end{lemma}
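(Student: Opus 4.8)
The plan is to treat the two parts largely independently, deriving each from the definition of the ideals $M^A,M^B$ in~(\ref{e:psi}) together with the containment $\Delta(I,J)\subseteq\Phi(I,J)$, which is immediate since $q_I(a)\otimes q_J(b)=0$ whenever $a\in I$ or $b\in J$. First I would record the elementary translation between the two descriptions: for any $M\in\mathrm{Id}'(A\otimes_\alpha B)$ and any closed ideal $I$ of $A$, the inclusion $I\subseteq M^A$ holds if and only if $I\otimes_\alpha B\subseteq M$. Indeed $I\subseteq M^A$ says exactly that $a\otimes b\in M$ for every $a\in I$ and $b\in B$; since $M$ is closed and $I\otimes_\alpha B$ is the closed ideal generated by these elementary tensors, this is equivalent to $I\otimes_\alpha B\subseteq M$. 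The symmetric statement relates $J\subseteq M^B$ to $A\otimes_\alpha J\subseteq M$.

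For (i), I would then observe that, because $\Delta(I,J)=I\otimes_\alpha B+A\otimes_\alpha J$, a factorial ideal $M$ contains $\Delta(I,J)$ precisely when it contains both summands, that is (by the translation above) precisely when $I\subseteq M^A$ and $J\subseteq M^B$. Restricting $\Psi$ to $\mathrm{Fac}(A\otimes_\alpha B)$, Proposition~\ref{p:psi}(ii) guarantees that $(M^A,M^B)\in\mathrm{Fac}(A)\times\mathrm{Fac}(B)$, so the conditions $I\subseteq M^A$ and $J\subseteq M^B$ are exactly $M^A\in\mathrm{hull}_f(I)$ and $M^B\in\mathrm{hull}_f(J)$. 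This identifies $\mathrm{hull}_f\Delta(I,J)$ with $\{M\in\mathrm{Fac}(A\otimes_\alpha B):\Psi(M)\in\mathrm{hull}_f(I)\times\mathrm{hull}_f(J)\}=\Psi^{-1}(\mathrm{hull}_f(I)\times\mathrm{hull}_f(J))$, as required; I would note in passing that $\Delta(I,J)$ is proper, since it lies inside $\Phi(I,J)$ and $(A/I)\otimes_\alpha(B/J)\neq 0$.

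For (ii), I would set $K=\Delta(I,J)$. The inclusion $I\subseteq K^A$ is clear, since $a\otimes b\in I\otimes_\alpha B\subseteq K$ for all $a\in I$, $b\in B$. For the reverse inclusion, suppose $a\in K^A$, so that $a\otimes b\in K\subseteq\Phi(I,J)=\ker(q_I\otimes q_J)$ for every $b\in B$; hence $q_I(a)\otimes q_J(b)=0$ in $(A/I)\otimes_\alpha(B/J)$ for all $b$. Choosing $b$ with $q_J(b)\neq 0$ (possible as $J$ is proper) and invoking the faithfulness of the minimal cross norm on elementary tensors (a simple tensor vanishes only if one of its factors does) forces $q_I(a)=0$, i.e.\ $a\in I$. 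Thus $K^A=I$, and the symmetric argument gives $K^B=J$, so $\Psi\circ\Delta(I,J)=(I,J)$.

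The only genuinely nontrivial ingredient is the last step: the appeal to faithfulness of the minimal tensor norm to pass from $q_I(a)\otimes q_J(b)=0$ with $q_J(b)\neq 0$ to $q_I(a)=0$. Everything else is a direct unwinding of the definitions of $M^A$, $M^B$, $\Phi$ and $\Delta$. I would also emphasise that this argument recovers and extends the prime-ideal case \cite[Lemma 2.13(i)]{blanch_kirch} without passing through it, since it applies verbatim to arbitrary proper $I,J$.
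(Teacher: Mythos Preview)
Your proof is correct. Part (i) is essentially the same as the paper's argument, just with the equivalence $I\subseteq M^A \Leftrightarrow I\otimes_\alpha B\subseteq M$ made more explicit; the paper uses the inclusion $\Delta\circ\Psi(M)\subseteq M$ from~(\ref{e:deltacircpsi}) for one direction, which amounts to the same thing.

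Part (ii) is where you genuinely diverge. The paper argues directly with a slice map: given $a\in K^A$ and $b\in B\setminus J$, it picks a bounded linear functional $\lambda$ on $B$ with $\lambda(J)=0$ and $\lambda(b)=1$, and uses the associated left slice map $L_\lambda:A\otimes_\alpha B\to A$. Since $L_\lambda(I\odot B)\subseteq I$ and $L_\lambda(A\odot J)=0$, one has $L_\lambda(K)\subseteq I$, and then $a=L_\lambda(a\otimes b)\in I$. Your route instead passes through the containment $\Delta(I,J)\subseteq\Phi(I,J)$ and appeals to the cross-norm identity $\|x\otimes y\|=\|x\|\,\|y\|$ in the minimal tensor product. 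Both are short and elementary; your argument has the virtue of avoiding slice maps entirely and making transparent why the inclusion $\Delta\subseteq\Phi$ is doing the work, while the paper's slice-map argument stays inside $K$ itself and never needs to invoke $\Phi$ or the cross-norm property.
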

\begin{proof}
To show (i), take $F \in \mathrm{hull}_f \Delta (I,J)$, then $\Psi (F) = (F^A,F^B ) \in \mathrm{Fac} (A) \times \mathrm{Fac} (B)$ and $F^A \supseteq I, F^B \supseteq J$.  Hence $\Psi (F) \in \mathrm{hull}_f (I) \times \mathrm{hull}_f (J)$.

On the other hand, suppose $F \in \mathrm{Fac} (A \otimes_{\alpha} B )$ and $ \Psi (F) \in \mathrm{hull}_f (I) \times \mathrm{hull}_f (J)$.  Then using~(\ref{e:deltacircpsi}), $\Delta (I,J)\subseteq \Delta (F^A,F^B) \subseteq F$, as required.

To prove (ii), denote by $K = \Delta (I,J)$ and $(K^A , K^B ) = \Psi (K)$.   Then if $a \in I$, $a \otimes B \subseteq I \otimes_{\alpha} B \subseteq K$, so $a \in K^A$ and hence $I \subseteq K^A$.  On the other hand, suppose $a \in K^A$ and $b \in B \backslash J$, so that $a \otimes b \in K$.  Choose a bounded linear functional $\lambda$ on $B$ vanishing on $J$ such that $\lambda (b) = 1$.  Let $L_{\lambda} : A \otimes_{\alpha} B \rightarrow A$ be the corresponding left slice map defined via $L_{\lambda} (a' \otimes b' ) = \lambda (b') a'$ on elementary tensors and extended to $A \otimes_{\alpha} B$ by linearity and continuity~\cite[Theorem 1]{tomiyama}.  Then $L_{\lambda} ( A \odot J ) = \{ 0 \}$ and $L_{\lambda} (I \odot B ) \subseteq I$, so that $L_{\lambda} (K) \subseteq I$. In particular $L_{\lambda} (a \otimes b ) =a \in I$, hence $K^A \subseteq I$ and so $K^A=I$. A similar argument shows that $K^B = J$, which completes the proof.
\end{proof}

\begin{corollary}
\label{c:delta}
Let $(I,J) \in \mathrm{Fac} (A) \times \mathrm{Fac} (B)$.  Then with $G_I,G_J$ and $G_{\Phi (I,J)}$ defined as in Lemma~\ref{l:phiglimm}, we have 
\[
 G_{\Phi (I,J)} = G_I \otimes_{\alpha} B + A \otimes_{\alpha} G_J
\]
\end{corollary}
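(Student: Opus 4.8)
The plan is to show that $\Delta(G_I,G_J)$ and $G_{\Phi(I,J)}$ are two closed two-sided ideals of $A\otimes_\alpha B$ sharing the same factorial hull, and then to recover each ideal as the kernel of that common hull. In this way the corollary becomes a direct consequence of Lemma~\ref{l:hulld} and Proposition~\ref{p:hullf}, into which the substantive work has already been absorbed.

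The first ingredient I would record is the elementary but decisive fact that for \emph{any} closed two-sided ideal $K$ of a C$^\ast$-algebra one has $K = k\bigl(\mathrm{hull}_f(K)\bigr)$. Indeed, since $\mathrm{hull}(K)\subseteq\mathrm{hull}_f(K)$ and every C$^\ast$-algebra is semisimple, we have $K\subseteq k\bigl(\mathrm{hull}_f(K)\bigr)\subseteq k\bigl(\mathrm{hull}(K)\bigr)=K$, the last equality being the standard Dixmier correspondence between closed ideals and hull-kernel-closed subsets of the primitive ideal space. Applied to $A\otimes_\alpha B$, this shows that the Glimm ideal $G_{\Phi(I,J)}$ equals $k\bigl(\mathrm{hull}_f(G_{\Phi(I,J)})\bigr)$, and that the closed ideal $\Delta(G_I,G_J)=G_I\otimes_\alpha B + A\otimes_\alpha G_J$ equals $k\bigl(\mathrm{hull}_f\,\Delta(G_I,G_J)\bigr)$. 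Note in passing that $\Delta(G_I,G_J)$ is proper, since it is contained in the proper ideal $\Phi(G_I,G_J)=\mathrm{ker}(q_{G_I}\otimes q_{G_J})$; this legitimises applying Lemma~\ref{l:hulld} to the proper Glimm ideals $G_I$ and $G_J$.

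The second step is to identify the two factorial hulls. Applying Lemma~\ref{l:hulld}(i) to $G_I$ and $G_J$ gives
\[
\mathrm{hull}_f\,\Delta(G_I,G_J)=\Psi^{-1}\bigl(\mathrm{hull}_f(G_I)\times\mathrm{hull}_f(G_J)\bigr).
\]
On the other hand, the final assertion of Proposition~\ref{p:hullf} states precisely that $M\in\mathrm{hull}_f(G_{\Phi(I,J)})$ if and only if $\Psi(M)=(M^A,M^B)\in\mathrm{hull}_f(G_I)\times\mathrm{hull}_f(G_J)$, which is to say
\[
\mathrm{hull}_f(G_{\Phi(I,J)})=\Psi^{-1}\bigl(\mathrm{hull}_f(G_I)\times\mathrm{hull}_f(G_J)\bigr).
\]
Hence the two hulls coincide.

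Combining these observations yields
\[
\Delta(G_I,G_J)=k\bigl(\mathrm{hull}_f\,\Delta(G_I,G_J)\bigr)=k\bigl(\mathrm{hull}_f(G_{\Phi(I,J)})\bigr)=G_{\Phi(I,J)},
\]
which is the asserted equality. I do not anticipate a genuine obstacle here: the delicate content lies entirely in the earlier results (Lemma~\ref{l:phiglimm}, fed through Proposition~\ref{p:hullf}, together with Lemma~\ref{l:hulld}). The only point that genuinely requires care is the passage from equality of the two factorial hulls to equality of the ideals themselves, which is exactly where the semisimplicity of $A\otimes_\alpha B$ enters, and the verification that $\Delta(G_I,G_J)$ is proper so that Lemma~\ref{l:hulld} applies.
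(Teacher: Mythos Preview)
Your proof is correct and follows essentially the same route as the paper: both identify $\mathrm{hull}_f(G_{\Phi(I,J)})$ and $\mathrm{hull}_f\,\Delta(G_I,G_J)$ with $\Psi^{-1}\bigl(\mathrm{hull}_f(G_I)\times\mathrm{hull}_f(G_J)\bigr)$ via Proposition~\ref{p:hullf} and Lemma~\ref{l:hulld}(i) respectively, and then conclude equality of the ideals. You are simply more explicit than the paper on two points it leaves tacit: the passage from equal factorial hulls to equal ideals (your $K=k(\mathrm{hull}_f(K))$ observation), and the properness of $\Delta(G_I,G_J)$ needed to invoke Lemma~\ref{l:hulld}.
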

\begin{proof}
Take $M \in \mathrm{Fac} ( A \otimes_{\alpha} B )$.  By Proposition~\ref{p:hullf}, $M \supseteq G_{\Phi (I,J)}$ if and only if \\ $M \in \Psi^{-1} \left( \mathrm{hull}_f (G_I) \times \mathrm{hull}_f (G_J )\right)$.  Hence by Lemma~\ref{l:hulld}(i), $M \supseteq G_{\Phi (I,J)}$ if and only if $M \supseteq \Delta \left( G_I,G_J \right) $, so $G_{\Phi (I,J)} = \Delta \left( G_I, G_J \right)$.
\end{proof}

As a consequence of Corollary~\ref{c:delta}, we are now in a position to prove a similar result to~\cite[Theorem 2.3]{kaniuth}:
\begin{theorem}
\label{t:homeo2}

Let $A$ and $B$ be C$^{\ast}$-algebras and denote by $\psi : \mathrm{Glimm}(A \otimes_{\alpha} B) \rightarrow \left( \mathrm{Glimm}(A) \times \mathrm{Glimm}(B) , \tau_{cr} \right)$ the homeomorphism of Theorem~\ref{t:homeo}.  Then identifying the Glimm spaces with the corresponding sets of ideals we have
\begin{enumerate}
\item[(i)] $\Delta = \psi^{-1}$ on $\mathrm{Glimm}(A) \times \mathrm{Glimm}(B)$, hence $\Delta$ is a homeomorphism of $\left( \mathrm{Glimm}(A) \times \mathrm{Glimm} (B) , \tau_{cr} \right)$ onto $\mathrm{Glimm}(A \otimes_{\alpha} B )$,
\item[(ii)] $\psi$ is given by the restriction of $\Psi$ to $\mathrm{Glimm}(A \otimes_{\alpha} B )$.

\end{enumerate}
\end{theorem}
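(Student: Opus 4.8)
The plan is to assemble the two computational facts already in hand---Corollary~\ref{c:delta}, which identifies the Glimm ideal $G_{\Phi(I,J)}$ with $\Delta(G_I,G_J)$, and Lemma~\ref{l:hulld}(ii), which gives $\Psi \circ \Delta = \mathrm{id}$---and to feed them through the defining relation for $\psi$ from Theorem~\ref{t:homeo}. Since the analytic content has essentially been discharged in those earlier results, the real work here is bookkeeping: keeping straight the identification of a point $p \in \mathrm{Glimm}(A)$ with its Glimm ideal $G_p = k([P])$, and doing the same for $B$ and for $A \otimes_\alpha B$.

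For (i) I would start from an arbitrary pair of Glimm ideals $(G,H) \in \mathrm{Glimm}(A) \times \mathrm{Glimm}(B)$ and, using Proposition~\ref{p:approxf}(iii), write $G = G_I$ and $H = G_J$ for some $I \in \mathrm{Fac}(A)$, $J \in \mathrm{Fac}(B)$. Fixing $P \in \mathrm{hull}(I)$ and $Q \in \mathrm{hull}(J)$, Proposition~\ref{p:approxf}(i) shows that the point of $\mathrm{Glimm}(A) \times \mathrm{Glimm}(B)$ corresponding under the ideal identification to $(G,H)$ is exactly $(\rho_A(P),\rho_B(Q))$. The defining relation $(\psi \circ \rho_\alpha)(\Phi(P,Q)) = (\rho_A(P),\rho_B(Q))$ then gives $\psi^{-1}(\rho_A(P),\rho_B(Q)) = \rho_\alpha(\Phi(P,Q))$, a point of $\mathrm{Glimm}(A \otimes_\alpha B)$ whose associated Glimm ideal is $k([\Phi(P,Q)])$. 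Since $\Phi(P,Q) \in \mathrm{hull}(\Phi(I,J))$ by Proposition~\ref{p:phi}(i), a further application of Proposition~\ref{p:approxf}(i) identifies this ideal with $G_{\Phi(I,J)}$, and Corollary~\ref{c:delta} rewrites it as $\Delta(G_I,G_J) = \Delta(G,H)$. This yields $\psi^{-1} = \Delta$ on the Glimm ideals; that $\Delta$ is then a homeomorphism onto all of $\mathrm{Glimm}(A \otimes_\alpha B)$ is immediate from the fact that $\psi$ is one by Theorem~\ref{t:homeo}.

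For (ii) I would take an arbitrary $p \in \mathrm{Glimm}(A \otimes_\alpha B)$, set $(G,H) = \psi(p)$, and use part (i) to write the Glimm ideal $G_p = \psi^{-1}(G,H) = \Delta(G,H)$. Applying $\Psi$ and invoking Lemma~\ref{l:hulld}(ii) (valid since Glimm ideals are proper) gives $\Psi(G_p) = \Psi(\Delta(G,H)) = (G,H) = \psi(p)$, so $\Psi$ restricted to $\mathrm{Glimm}(A \otimes_\alpha B)$ agrees with $\psi$. I expect the only genuine friction to be in part (i), in verifying that the point of the product Glimm space attached to a given pair of Glimm ideals is represented by $(\rho_A(P),\rho_B(Q))$ with $P,Q$ in the respective hulls---that is, that the chosen factorial representatives are compatible with both the point picture and the ideal picture---which is precisely what Proposition~\ref{p:approxf}(i) is designed to supply.
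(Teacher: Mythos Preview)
Your proposal is correct and follows essentially the same route as the paper: both arguments reduce part~(i) to Corollary~\ref{c:delta} together with the defining relation $(\psi \circ \rho_\alpha)(\Phi(P,Q)) = (\rho_A(P),\rho_B(Q))$ from Theorem~\ref{t:homeo}, and both derive part~(ii) from part~(i) via Lemma~\ref{l:hulld}(ii). The paper packages (i) as a commutative-diagram argument (showing $\Delta \circ (\rho_A \times \rho_B) = \rho_\alpha \circ \Phi = \psi^{-1} \circ (\rho_A \times \rho_B)$ and using surjectivity of $\rho_A \times \rho_B$), whereas you do the same computation pointwise; your detour through factorial representatives via Proposition~\ref{p:approxf}(iii) is harmless but unnecessary, since one can simply start from primitive $P,Q$ with $G = k([P])$, $H = k([Q])$.
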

\begin{proof}
Following the notation of Theorem~\ref{t:homeo}, Proposition~\ref{p:approxf}(i) and Corollary~\ref{c:delta} show that the diagram
\[
\xymatrix{ \mathrm{Prim} (A) \times \mathrm{Prim} (B) \ar@{^{(}->}^{\Phi}[r] \ar_{\rho_A \times \rho_B }[d] & \mathrm{Prim} (A \otimes_{\alpha} B ) \ar^{\rho_{\alpha}}[d] \\
\mathrm{Glimm} (A) \times \mathrm{Glimm} (B) \ar^{\Delta}[r] &  \mathrm{Glimm} ( A \otimes_{\alpha} B ) 
}
\]
commutes, i.e., that $\Delta \circ ( \rho_A \times \rho_B) = \rho_{\alpha} \circ \Phi$.  Therefore if we can show that $\psi^{-1} \circ ( \rho_A \times \rho_B ) = \rho_{\alpha} \circ \Phi$ also, then it will follow necessarily that $\Delta = \psi^{-1}$ (since $\rho_A \times \rho_B$ is surjective).

From Theorem~\ref{t:homeo}, $\overline{(\rho_A \times \rho_B)} \circ \Phi = \rho_A \times \rho_B$ and $\psi \circ \rho_{\alpha} = \overline{(\rho_A \times \rho_B )}$, so we have 
\begin{eqnarray*}
\psi^{-1} \circ ( \rho_A \times \rho_B ) &=& \psi^{-1} \circ \overline{(\rho_A \times \rho_B)} \circ \Phi \\
&=& \psi^{-1} \circ ( \psi \circ \rho_{\alpha} ) \circ \Phi \\
& = & \rho_{\alpha} \circ \Phi, \\
\end{eqnarray*}
which proves (i).

Assertion (ii) is then immediate from (i) and Lemma~\ref{l:hulld}(ii).
\end{proof}

Suppose that $A$ and $B$ are C$^{\ast}$-algebras such that $A \otimes_{\alpha} B$ satisfies Tomiyama's property (F). Under this assumption, Kaniuth's result~\cite[Theorem 2.3]{kaniuth} shows that the map $\Delta : \left( \mathrm{Glimm}(A), \tau_q \right) \times \left( \mathrm{Glimm} (B),\tau_q \right) \rightarrow \left( \mathrm{Glimm} (A \otimes_{\alpha} B ) , \tau_q \right)$ is an open bijection, where $\tau_q$ denotes the quotient topology on the Glimm space as discussed in Remark~\ref{r:tauq}.  

In  order to extend this to arbitrary minimal tensor  products, we first need some Lemmas:

\begin{lemma}
\label{l:tauq}
Suppose that the complete regularisation maps $\rho_A$ and $\rho_B$ are open with respect to either $\tau_q$ or $\tau_{cr}$ on $\mathrm{Glimm}(A)$ and $\mathrm{Glimm}(B)$ respectively.  Then
\begin{enumerate}
\item[(i)] $\tau_q = \tau_{cr}$ on each of $\mathrm{Glimm}(A)$ and $\mathrm{Glimm}(B)$,
\item[(ii)] $\tilde{\tau}_q = \tau_{cr} = \tau_p$ on $\mathrm{Glimm}(A) \times \mathrm{Glimm} (B)$, where $\tilde{\tau}_q$ is the topology induced by the product map $\rho_A \times \rho_B$,
\item[(iii)] $\tau_q = \tau_{cr}$ on $\mathrm{Glimm}(A \otimes_{\alpha} B )$.
\end{enumerate}
\end{lemma}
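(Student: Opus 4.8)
The plan is to handle the three parts in order: (i) and (ii) follow quickly from results already assembled, while (iii) is the substance of the lemma and requires the factorial-ideal model of the Glimm space. For (i), the hypothesis is exactly the situation recorded in Remark~\ref{r:tauq}: if $\rho_A$ is open with respect to either $\tau_q$ or $\tau_{cr}$ then the two topologies on $\mathrm{Glimm}(A)$ coincide (by \cite{arch_som_qs}), and symmetrically for $\rho_B$, so (i) is essentially a citation. For (ii), I would use (i) to regard $\rho_A,\rho_B$ as open continuous surjections onto $(\mathrm{Glimm}(A),\tau_{cr})$ and $(\mathrm{Glimm}(B),\tau_{cr})$. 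Openness of $\rho_A$ is precisely condition (ii) of Proposition~\ref{p:wcpct}, so that result (equivalently Corollary~\ref{c:wcpct2}) yields $\tau_{cr}=\tau_p$ on $\mathrm{Glimm}(A)\times\mathrm{Glimm}(B)$. For the remaining equality I would note that $\rho_A\times\rho_B:\mathrm{Prim}(A)\times\mathrm{Prim}(B)\to(\mathrm{Glimm}(A)\times\mathrm{Glimm}(B),\tau_p)$ is a continuous open surjection, being a product of such maps; since the quotient topology induced by a continuous open surjection is the topology of its codomain, $\tilde{\tau}_q=\tau_p$, and combining gives $\tilde{\tau}_q=\tau_{cr}=\tau_p$.

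For (iii) the right tool is the factorial picture with the maps $\Phi,\Psi$, rather than $\mathrm{Prim}$, because $\Psi$ is defined on all of $\mathrm{Fac}(A\otimes_\alpha B)$. First I would upgrade (ii) to the factorial level by showing the quotient maps $\rho_A^f$ and $\rho_B^f$ are themselves open. Every open set of $\mathrm{Fac}(A)$ has the form $U_J=\mathrm{Fac}(A)\setminus\mathrm{hull}_f(J)$, and using $[I]_f=\mathrm{hull}_f(G_I)$ together with $k(\mathrm{hull}_f(G_I))=G_I=k(\mathrm{hull}(G_I))$ from Proposition~\ref{p:approxf}, one checks that $\rho_A^f(U_J)$ coincides with the image under $\rho_A$ of the corresponding open set $U_J\cap\mathrm{Prim}(A)$ of $\mathrm{Prim}(A)$ (both equal $\{p:G_p\not\supseteq J\}$); openness of $\rho_A$ then forces $\rho_A^f$ to be open. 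Consequently $\rho_A^f\times\rho_B^f$ is a continuous open surjection onto $(\mathrm{Glimm}(A)\times\mathrm{Glimm}(B),\tau_p)$, hence a quotient map, and by (ii) its codomain topology is $\tau_{cr}$.

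Next I would observe that $\Psi:\mathrm{Fac}(A\otimes_\alpha B)\to\mathrm{Fac}(A)\times\mathrm{Fac}(B)$ is a quotient map: it is a continuous surjection by Proposition~\ref{p:psi}(ii),(iii) and admits the continuous section $\Phi$ by Propositions~\ref{p:phi}(iii) and~\ref{p:psi}(i), and a continuous surjection with a continuous right inverse is automatically a quotient map. Since a composite of quotient maps is a quotient map, $(\rho_A^f\times\rho_B^f)\circ\Psi$ is a quotient map onto $(\mathrm{Glimm}(A)\times\mathrm{Glimm}(B),\tau_{cr})$. Proposition~\ref{p:hullf} and Theorem~\ref{t:homeo2}(ii) identify this composite with $\psi\circ\rho_\alpha^f$, where $\rho_\alpha^f:\mathrm{Fac}(A\otimes_\alpha B)\to\mathrm{Glimm}(A\otimes_\alpha B)$ is the quotient map defining $\tau_q$. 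Comparing the two quotient descriptions, a set $V$ is $\tau_{cr}$-open iff $(\rho_\alpha^f)^{-1}(\psi^{-1}(V))$ is open iff $\psi^{-1}(V)$ is $\tau_q$-open, so $\psi$ is a homeomorphism from $(\mathrm{Glimm}(A\otimes_\alpha B),\tau_q)$ onto $(\mathrm{Glimm}(A)\times\mathrm{Glimm}(B),\tau_{cr})$; since $\psi$ is already a homeomorphism onto the same space for $\tau_{cr}$ by Theorem~\ref{t:homeo} and $\psi$ is one fixed bijection, I conclude $\tau_q=\tau_{cr}$ on $\mathrm{Glimm}(A\otimes_\alpha B)$.

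The main obstacle is the passage from $\mathrm{Prim}$ to $\mathrm{Fac}$ in (iii): one cannot simply transport (ii) along $\rho_\alpha$, since the relevant map $\overline{\rho_A\times\rho_B}$ is only the Lazar extension and there is no companion map $\mathrm{Prim}(A\otimes_\alpha B)\to\mathrm{Prim}(A)\times\mathrm{Prim}(B)$, whereas $\Psi$ is available on all of $\mathrm{Fac}(A\otimes_\alpha B)$. Two points need genuine care: establishing openness of $\rho_A^f,\rho_B^f$ from that of $\rho_A,\rho_B$ (openness, not merely the quotient property, is essential here, because a product of quotient maps need not be a quotient map, so I must make $\rho_A^f\times\rho_B^f$ a quotient map by openness); and the verification that $\Psi$ is a quotient map via its section $\Phi$, which is precisely what allows the factorial composite to be matched with $\psi\circ\rho_\alpha^f$ and hence the two topologies on $\mathrm{Glimm}(A\otimes_\alpha B)$ to be compared.
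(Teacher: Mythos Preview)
Your treatment of (i) and (ii) matches the paper's: (i) is the citation to \cite{arch_som_qs}, and your argument for $\tilde{\tau}_q=\tau_p$ (``a continuous open surjection is a quotient map, and products of open maps are open'') is just the abstract form of the paper's explicit neighbourhood computation; $\tau_p=\tau_{cr}$ then comes from Proposition~\ref{p:wcpct}(ii) in both.

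For (iii) your argument is correct but takes a genuinely different route. You pass to the factorial picture, first proving that $\rho_A^f,\rho_B^f$ are open (your verification that $\rho_A^f(U_J)=\{p:G_p\not\supseteq J\}=\rho_A(U_J\cap\mathrm{Prim}(A))$ is sound), then exhibit $\Psi$ as a quotient map via its continuous section $\Phi$, compose quotient maps, and identify $(\rho_A^f\times\rho_B^f)\circ\Psi=\psi\circ\rho_\alpha^f$ using Proposition~\ref{p:hullf} and Theorem~\ref{t:homeo2}(ii). This works, and the identification is valid (it amounts to $\Psi(G_M)=(G_{M^A},G_{M^B})$, which follows from Lemma~\ref{l:phicircpsi}, Corollary~\ref{c:delta} and Lemma~\ref{l:hulld}(ii)).

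The paper, however, stays entirely at the $\mathrm{Prim}$ level and is considerably shorter. Given a $\tau_q$-open $\mathcal{U}\subseteq\mathrm{Glimm}(A\otimes_\alpha B)$, it simply observes that
\[
(\rho_A\times\rho_B)^{-1}\bigl(\psi(\mathcal{U})\bigr)=\Phi^{-1}\bigl(\rho_\alpha^{-1}(\mathcal{U})\bigr),
\]
an immediate consequence of the identity $\psi\circ\rho_\alpha\circ\Phi=\rho_A\times\rho_B$ from Theorem~\ref{t:homeo} together with bijectivity of $\psi$. The right-hand side is open because $\Phi$ is continuous (Proposition~\ref{p:phi}(ii)), so $\psi(\mathcal{U})$ is $\tilde{\tau}_q$-open and hence $\tau_{cr}$-open by (ii). Thus your ``main obstacle''---the absence of a companion map $\mathrm{Prim}(A\otimes_\alpha B)\to\mathrm{Prim}(A)\times\mathrm{Prim}(B)$---is not an obstacle at all: continuity of $\Phi$ in the \emph{opposite} direction is already enough. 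Your factorial machinery (openness of $\rho_A^f$, $\Psi$ as a quotient map) is correct and foreshadows what the paper does later in Theorem~\ref{t:kaniuth}, but for this lemma it is unnecessary.
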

\begin{proof}
(i) is shown in the discussion in~\cite[p. 351]{arch_som_qs}.

(ii) As a consequence of (i), we may consider $\tau_p$ as the product of the quotient topologies.  Since $\rho_A \times \rho_B$ is necessarily $\tau_p$ continuous, $\tau_p$ is always weaker than $\tilde{\tau_q}$.  

Consider a $\tilde{\tau}_q$ open subset $\mathcal{U}$ of $\mathrm{Glimm}(A) \times \mathrm{Glimm}(B)$ and let $(x,y) \in \mathcal{U}$.  Choose $(P,Q) \in \mathrm{Prim}(A) \times \mathrm{Prim} (B)$ with $(\rho_A \times \rho_B )(P,Q) = (x,y)$.  Then since $(\rho_A \times \rho_B )^{-1} ( \mathcal{U})$ is an open subset of $\mathrm{Prim}(A) \times \mathrm{Prim} (B)$, we can find open neighbourhoods $\mathcal{W}$ of $P$ and $\mathcal{S}$ of $Q$ such that $\mathcal{W} \times \mathcal{S} \subseteq (\rho_A \times \rho_B )^{-1} ( \mathcal{U}  )$.  Then we have
\[
(\rho_A \times \rho_B )(P,Q) = (x,y) \in \rho_A ( \mathcal{W} ) \times \rho_B ( \mathcal{S} ) \subseteq \mathcal{U} ,
\]
and $\rho_A ( \mathcal{W} ) \times \rho_B ( \mathcal{S} )$ is $\tau_p$-open  since $\rho_A$ and $\rho_B$ are both $\tau_q$-open.  In particular $(x,y)$ is a $\tau_p$-interior point of $\mathcal{U}$, and  hence $\mathcal{U}$ is $\tau_p$-open.

The fact that $\tau_p = \tau_{cr}$ follows from condition (ii) of Proposition~\ref{p:wcpct}.

As for (iii), it is always true that $\tau_q$ is stronger than $\tau_{cr}$, thus we need to prove that any $\tau_q$ open subset $\mathcal{U}$ of $\mathrm{Glimm}(A \otimes_{\alpha} B )$ is $\tau_{cr}$-open.     By part (ii) and Corollary~\ref{c:wcpct2}, $\psi$ is a homeomorphism of  $\left( \mathrm{Glimm}(A \otimes_{\alpha} B) , \tau_{cr} \right)$ onto $(\mathrm{Glimm}(A) \times  \mathrm{Glimm}(B) , \tilde{\tau}_q )$.  So given a $\tau_q$-open subset $\mathcal{U}$ of $\mathrm{Glimm}(A \otimes_{\alpha} B )$, it will suffice to prove that $\psi (\mathcal{U})$ is $\tilde{\tau}_q$-open, that is, that $(\rho_A \times \rho_B)^{-1} ( \psi ( \mathcal{U} ) )$ is open.

Denote by $\mathcal{W}= (\rho_A \times \rho_B )^{-1} ( \psi ( \mathcal{U} ) )$.  We will show that $\mathcal{W}  = \Phi^{-1} \left( \rho_{\alpha}^{-1} ( \mathcal{U} ) \right)$. Since $\mathcal{U}$ is $\tau_q$-open and $\Phi$ is continuous on $\mathrm{Prim}(A) \times \mathrm{Prim}(B)$ by Proposition~\ref{p:phi}(ii), this will imply that $\mathcal{W}$ is open. For any $(P , Q ) \in \mathrm{Prim}(A) \times \mathrm{Prim} (B)$,  Theorem~\ref{t:homeo} gives
\[
\psi \circ \rho_{\alpha} \circ \Phi (P, Q ) = ( \rho_A \times \rho_B ) (P,Q),
\]
so that $(P,Q) \in \mathcal{W} $ if and only if $\rho_{\alpha} \circ \Phi (P,Q) \in \psi^{-1} ( \psi ( \mathcal{U} ))= \mathcal{U}$.  It follows that $\mathcal{W} = \Phi^{-1} \left( \rho_{\alpha}^{-1} ( \mathcal{U} ) \right)$, so that $\psi( \mathcal{U} )$ is $\tilde{\tau}_q$-open and hence $\mathcal{U}$ is $\tau_{cr}$-open.

\end{proof}

\begin{lemma}
\label{l:homeo}
Let $\rho_A^f, \rho_B^f$ and $\rho_{\alpha}^f$ denote the complete regularisation maps of $\mathrm{Fac}(A), \mathrm{Fac} (B)$ and $\mathrm{Fac}(A \otimes_{\alpha} B)$ respectively, and let $\psi: \mathrm{Glimm}(A \otimes_{\alpha} B) \rightarrow \mathrm{Glimm}(A) \times \mathrm{Glimm} (B)$ be the map of Theorem~\ref{t:homeo}.  Then it holds that
\[
\psi \circ \rho_{\alpha}^f \circ \Phi = \rho_{A}^f \times \rho_B^f
\]
on $\mathrm{Fac}(A) \times \mathrm{Fac} (B)$.
\end{lemma}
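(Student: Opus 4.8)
The plan is to reduce the identity to the corresponding statement on primitive ideals, where the defining property of $\psi$ from Theorem~\ref{t:homeo} is directly available, and then transfer along the hull by means of Proposition~\ref{p:approxf}(i). The essential background fact is that $\mathrm{Glimm}$ has been realised simultaneously as $\rho(\mathrm{Prim})$ and as $\rho(\mathrm{Fac})$, the two being identified through Proposition~\ref{p:approxf}(i)--(ii): for a factorial ideal and any primitive ideal lying in its hull, both constructions assign the same Glimm ideal, and hence the same point of the Glimm space.

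First I would fix $(I,J) \in \mathrm{Fac}(A) \times \mathrm{Fac}(B)$ and choose primitive ideals $P \in \mathrm{hull}(I)$ and $Q \in \mathrm{hull}(J)$; these exist because $I$ and $J$ are proper ideals. Applying Proposition~\ref{p:approxf}(i) to $A$ and to $B$, the associated Glimm ideals satisfy $k([I]_f) = k([P])$ and $k([J]_f) = k([Q])$, which under the identification of the two constructions of the Glimm space reads $\rho_A^f(I) = \rho_A(P)$ and $\rho_B^f(J) = \rho_B(Q)$.

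Next I would carry out the same passage inside $A \otimes_{\alpha} B$. Since $P \supseteq I$ and $Q \supseteq J$, Proposition~\ref{p:phi}(i) gives $\Phi(P,Q) \supseteq \Phi(I,J)$, so $\Phi(P,Q) \in \mathrm{hull}(\Phi(I,J))$; moreover $\Phi(I,J) \in \mathrm{Fac}(A \otimes_{\alpha} B)$ by Proposition~\ref{p:phi}(iii) and $\Phi(P,Q) \in \mathrm{Prim}(A \otimes_{\alpha} B)$ by Proposition~\ref{p:phi}(ii). Applying Proposition~\ref{p:approxf}(i) now to $A \otimes_{\alpha} B$ yields $k([\Phi(I,J)]_f) = k([\Phi(P,Q)])$, that is, $\rho_{\alpha}^f(\Phi(I,J)) = \rho_{\alpha}(\Phi(P,Q))$.

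Finally I would assemble the pieces using the displayed identity of Theorem~\ref{t:homeo}, which is stated on $\mathrm{Prim}(A) \times \mathrm{Prim}(B)$:
\[
\psi\bigl(\rho_{\alpha}^f(\Phi(I,J))\bigr) = \psi\bigl(\rho_{\alpha}(\Phi(P,Q))\bigr) = \bigl(\rho_A(P), \rho_B(Q)\bigr) = \bigl(\rho_A^f(I), \rho_B^f(J)\bigr),
\]
which is exactly $\psi \circ \rho_{\alpha}^f \circ \Phi = \rho_A^f \times \rho_B^f$ on $\mathrm{Fac}(A) \times \mathrm{Fac}(B)$. The only genuine obstacle is bookkeeping: one must keep the two complete regularisation maps $\rho_A$ (defined on $\mathrm{Prim}$) and $\rho_A^f$ (defined on $\mathrm{Fac}$) carefully distinguished while simultaneously exploiting that they agree under the hull-passage of Proposition~\ref{p:approxf}, and one must recall that the characterisation of $\psi$ supplied by Theorem~\ref{t:homeo} is available \emph{only} on primitive ideals. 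This is precisely why the argument proceeds through the chosen pair $(P,Q) \in \mathrm{hull}(I) \times \mathrm{hull}(J)$ rather than attempting to work with the factorial pair $(I,J)$ directly.
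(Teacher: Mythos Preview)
Your proof is correct and follows essentially the same route as the paper's own argument: pick $(P,Q)\in\mathrm{hull}(I)\times\mathrm{hull}(J)$, use Proposition~\ref{p:approxf}(i) on $A$, $B$, and $A\otimes_\alpha B$ to replace the factorial regularisation maps by the primitive ones, invoke Proposition~\ref{p:phi}(i) to get $\Phi(P,Q)\in\mathrm{hull}(\Phi(I,J))$, and finish with the defining identity for $\psi$ from Theorem~\ref{t:homeo}. Your write-up is in fact slightly more explicit than the paper's in noting that $\Phi(I,J)\in\mathrm{Fac}(A\otimes_\alpha B)$ and $\Phi(P,Q)\in\mathrm{Prim}(A\otimes_\alpha B)$, which is a helpful clarification.
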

\begin{proof}
Let $(I,J) \in \mathrm{Fac}(A) \times \mathrm{Fac} (B)$ and denote by $(p,q) = ( \rho_A^f \times \rho_B^f )(I,J)$.  Take $(P,Q) \in \mathrm{hull}(I) \times \mathrm{hull} (J)$, so that $(\rho_A \times \rho_B )(P,Q) = (p,q)$ by Proposition~\ref{p:approxf}(i).  Then by Proposition~\ref{p:phi}(i), $\Phi (P,Q) \in \mathrm{hull} \left( \Phi (I,J) \right)$, so that  $( \rho_{\alpha} \circ \Phi )(P,Q) = ( \rho_{\alpha}^f \circ \Phi )(I,J)$ by Proposition~\ref{p:approxf}(i) applied to $A \otimes_{\alpha} B$.

Finally, Theorem~\ref{t:homeo} gives
\[
(\psi \circ \rho_{\alpha} \circ \Phi )(P,Q) = ( \rho_A \times \rho_B )(P,Q),
\]
so that
\[
( \psi \circ \rho_{\alpha}^f \circ \Phi )(I,J) = ( \rho_A^f \times \rho_B^f)(I,J),
\]
as required.
\end{proof}
We are now in a position to extend~\cite[Theorem 2.3]{kaniuth}, which required the assumption that $A \otimes_{\alpha} B$ satisfies property (F).
\begin{theorem}
\label{t:kaniuth}
The map $\psi$ of Theorem~\ref{t:homeo} defines a continuous bijection of $(\mathrm{Glimm}(A \otimes_{\alpha} B) , \tau_q )$ onto the product space $(\mathrm{Glimm}(A) , \tau_q) \times ( \mathrm{Glimm} (B) , \tau_q )$, where $\tau_q$ denotes the quotient topology induced by the complete regularisation map.  It follows that its inverse $\Delta$ is an open bijection.  Moreover, $\Delta$ is a homeomorphism whenever the complete regularisation maps $\rho_A$ and $\rho_B$ are open with respect to the quotient topologies on $\mathrm{Glimm}(A)$ and $\mathrm{Glimm}(B)$.
\end{theorem}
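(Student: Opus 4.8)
The plan is to note that, by Theorem~\ref{t:homeo} and Theorem~\ref{t:homeo2}, $\psi$ is already a bijection with inverse $\Delta$, so the only substantive part of the first assertion is the continuity of $\psi$ from the quotient topology $\tau_q$ on $\mathrm{Glimm}(A \otimes_\alpha B)$ to the product of the quotient topologies on $\mathrm{Glimm}(A)$ and $\mathrm{Glimm}(B)$. Since $\tau_q$ is by construction the quotient topology induced by $\rho_\alpha^f$, the universal property of quotient maps reduces this to showing that the composite $\psi \circ \rho_\alpha^f : \mathrm{Fac}(A \otimes_\alpha B) \to (\mathrm{Glimm}(A), \tau_q) \times (\mathrm{Glimm}(B), \tau_q)$ is continuous. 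The key identity I would establish is
\[
\psi \circ \rho_\alpha^f = (\rho_A^f \times \rho_B^f) \circ \Psi
\]
on all of $\mathrm{Fac}(A \otimes_\alpha B)$; granting this, the right-hand side is continuous because $\Psi$ is continuous on $\mathrm{Fac}(A \otimes_\alpha B)$ by Proposition~\ref{p:psi}(iii) while $\rho_A^f, \rho_B^f$ are continuous onto the $\tau_q$ Glimm spaces, so that $\psi$ is $\tau_q$-continuous.

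To prove the identity I would fix $M \in \mathrm{Fac}(A \otimes_\alpha B)$ and use Lemma~\ref{l:phicircpsi}, which gives $M \approx_f \Phi \circ \Psi(M)$ and hence $\rho_\alpha^f(M) = \rho_\alpha^f(\Phi(\Psi(M)))$. Since $\Psi(M) = (M^A, M^B) \in \mathrm{Fac}(A) \times \mathrm{Fac}(B)$ by Proposition~\ref{p:psi}(ii), Lemma~\ref{l:homeo} applies to the pair $\Psi(M)$ and yields $\psi\bigl(\rho_\alpha^f(\Phi(\Psi(M)))\bigr) = (\rho_A^f \times \rho_B^f)(\Psi(M))$; combining the two displays gives the identity. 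This is the one place where the factorial-ideal description of the Glimm space is essential: the elementary relation $\psi \circ \rho_\alpha^f \circ \Phi = \rho_A^f \times \rho_B^f$ of Lemma~\ref{l:homeo} holds only on the image of $\Phi$, which is merely dense in $\mathrm{Fac}(A \otimes_\alpha B)$, and it is precisely the fact that $\Psi$ sends factorial ideals back into $\mathrm{Fac}(A) \times \mathrm{Fac}(B)$ (together with $M \approx_f \Phi \circ \Psi(M)$) that propagates the identity to every $M$. I expect this extension from the dense image of $\Phi$ to all of $\mathrm{Fac}(A \otimes_\alpha B)$ to be the main obstacle.

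The remaining assertions are then formal. A continuous bijection has an open inverse, since for open $V$ the image $\Delta(V) = \psi^{-1}(V)$ is open by continuity of $\psi$; thus $\Delta = \psi^{-1}$ is an open bijection. For the final claim I would invoke Lemma~\ref{l:tauq}: if $\rho_A$ and $\rho_B$ are $\tau_q$-open, then $\tau_q = \tau_{cr}$ on each of $\mathrm{Glimm}(A)$, $\mathrm{Glimm}(B)$ and $\mathrm{Glimm}(A \otimes_\alpha B)$, while on $\mathrm{Glimm}(A) \times \mathrm{Glimm}(B)$ one has $\tau_p = \tau_{cr}$ and the product of the quotient topologies coincides with $\tau_p$. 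Under these identifications $\psi$ is exactly the homeomorphism $(\mathrm{Glimm}(A \otimes_\alpha B), \tau_{cr}) \to (\mathrm{Glimm}(A) \times \mathrm{Glimm}(B), \tau_{cr})$ of Theorem~\ref{t:homeo}, so $\psi$, and hence its inverse $\Delta$, is a homeomorphism.
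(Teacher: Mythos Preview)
Your proof is correct and uses the same ingredients as the paper: Lemma~\ref{l:phicircpsi}, Lemma~\ref{l:homeo}, the continuity of $\Psi$ from Proposition~\ref{p:psi}(iii), and Lemma~\ref{l:tauq} for the final homeomorphism statement. The organization differs slightly: you isolate the global identity $\psi \circ \rho_\alpha^f = (\rho_A^f \times \rho_B^f) \circ \Psi$ on $\mathrm{Fac}(A \otimes_\alpha B)$ and deduce continuity of $\psi$ directly from it via the universal property of the quotient, whereas the paper works with a fixed basic open set $\mathcal{U} \times \mathcal{V}$ and verifies the equality of preimages $(\rho_\alpha^f)^{-1}\bigl(\psi^{-1}(\mathcal{U} \times \mathcal{V})\bigr) = \Psi^{-1}\bigl((\rho_A^f)^{-1}(\mathcal{U}) \times (\rho_B^f)^{-1}(\mathcal{V})\bigr)$ by two separate inclusions. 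Your identity immediately implies that set equality, so your packaging is a mild streamlining of the same argument; the paper's second inclusion (which invokes Proposition~\ref{p:hullf}) is in fact redundant once the identity is established, since it is equivalent to the first.
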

\begin{proof}
Let $\mathcal{U} \times \mathcal{V}$ be a basic open subset of the product space $(\mathrm{Glimm}(A), \tau_q) \times ( \mathrm{Glimm}(B) , \tau_q )$. Then by Proposition~\ref{p:approxf}(ii), the preimages $\mathcal{W} : = (\rho_A^f)^{-1} ( \mathcal{U} )$ and $\mathcal{S}:= (\rho_B^f)^{-1} ( \mathcal{V} )$ are open subsets of $\mathrm{Fac}(A)$ and $\mathrm{Fac}(B)$ respectively.  We claim that $\psi^{-1}( \mathcal{U} \times \mathcal{V} )$ is a $\tau_q$-open subset of $\mathrm{Glimm}(A \otimes_{\alpha} B )$, that is, that $(\rho_{\alpha}^f)^{-1} \left( \psi^{-1}(\mathcal{U} \times \mathcal{V} ) \right)$ is an open subset of $\mathrm{Fac}(A \otimes_{\alpha} B )$.  Since  the map $\Psi$ is continuous by Proposition~\ref{p:psi}(iii), it will suffice to show that
\[
(\rho_{\alpha}^f)^{-1} \left( \psi^{-1} ( \mathcal{U} \times \mathcal{V} ) \right) = \Psi^{-1} ( \mathcal{W} \times \mathcal{S} ).
\]

Let $M \in \Psi^{-1} ( \mathcal{W} \times \mathcal{S} )$.  By Lemma~\ref{l:phicircpsi} $M \approx_f \Phi \circ \Psi (M)$, so that $\rho_{\alpha}^f (M) = \rho_{\alpha}^f ( \Phi \circ \Psi (M))$. Then Lemma~\ref{l:homeo} gives
\[
\psi \circ \rho_{\alpha}^f (M) = ( \psi \circ \rho_{\alpha}^f \circ \Phi ) ( \Psi (M) ) = ( \rho_A^f \times \rho_B^f ) ( \Psi (M)) \in \mathcal{U} \times \mathcal{V}.
\]
Hence $\rho_{\alpha}^f (M) \in \psi^{-1} ( \mathcal{U} \times \mathcal{V} )$, and we have $\Psi^{-1} ( \mathcal{W} \times \mathcal{S} ) \subseteq ( \rho_{\alpha}^f)^{-1} \left( \psi^{-1} ( \mathcal{U} \times \mathcal{V} ) \right)$.

To show the reverse inclusion, let $M \in (\rho_{\alpha}^{f})^{-1} \left( \psi^{-1} ( \mathcal{U} \times \mathcal{V} ) \right)$, and denote by $(p,q) =  ( \psi \circ \rho_{\alpha}^f )(M) \in \mathcal{U} \times \mathcal{V}$.  Choose $(I,J) \in \mathcal{W} \times \mathcal{S}$ with $ (\rho_{A}^f \times \rho_{B}^f)(I,J) = (p,q)$.  Then invoking Lemma~\ref{l:homeo} again we have
\[
(\psi \circ \rho_{\alpha}^f)( \Phi (I,J) ) = ( \rho_A^f \times \rho_B^f)(I,J) = (p,q).
\]
Since $\psi$ is injective and $(\psi \circ \rho_{\alpha}^f)(M)=(p,q)$, it follows that $\rho_{\alpha}^f(M) = \rho_{\alpha}^f ( \Phi (I,J))$ and hence $M \approx_f \Phi (I,J)$.  By Proposition~\ref{p:hullf}, this implies that $\Psi (M) \approx_f (I,J)$, so that $( \rho_A^f \times \rho_B^f) ( \Psi (M) ) = (p,q)$.  In particular $\Psi(M) \in ( \rho_A^f \times \rho_B^f)^{-1}( \mathcal{U} \times \mathcal{V} ) = \mathcal{W} \times \mathcal{S}$, so that $M \in \Psi^{-1} ( \mathcal{W} \times \mathcal{S} )$ and hence $(\rho_{\alpha}^f)^{-1} \left( \psi^{-1} ( \mathcal{U} \times \mathcal{V} ) \right) \subseteq \Psi^{-1} ( \mathcal{W} \times \mathcal{S} )$, as required.

If in addition the complete regularisation maps $\rho_A$ and $\rho_B$ are open, then by Lemma~\ref{l:tauq} we have $\tau_q = \tau_{cr}$ on each of $\mathrm{Glimm}(A) , \mathrm{Glimm}(B)$ and $\mathrm{Glimm}(A \otimes_{\alpha} B )$.  Applying Corollary~\ref{c:wcpct2} and Theorem~\ref{t:homeo2} it follows that $\Delta$ is a homeomorphism of $( \mathrm{Glimm}(A) , \tau_q) \times ( \mathrm{Glimm}(B) , \tau_q )$ onto $\mathrm{Glimm}(A \otimes_{\alpha} B )$.
\end{proof}

\section{Sectional representation}

We first describe the Dauns-Hofmann Theorem on sectional representation for  C$^{\ast}$-algebras.  This originally appeared as~\cite[Corollary 8.13]{dauns_hofmann}, however we adopt a version that follows from~\cite[Theorem 3.3]{nilsen_bundles} for convenience.  For any C$^{\ast}$-algebra $A$ there is a canonically associated upper semicontinuous C$^{\ast}$-bundle (in the sense of~\cite[Section 1]{nilsen_bundles}) $\mathcal{A}$ over $\mathrm{Glimm}(A)$, if this space is locally compact, or $\beta \mathrm{Glimm}(A)$ otherwise. Under this representation,
\begin{enumerate}
\item[(i)] the  fibre algebras are $\ast$-isomorphic to the Glimm quotients $A / G_p$ of $A$, where we define $G_p = A$ for $p \in \beta \mathrm{Glimm}(A) \backslash \mathrm{Glimm}(A)$ if necessary, and
\item[(ii)] the map sending $a \mapsto \hat{a}$, where
\[
\hat{a} (p) = a + G_p \ \ (p \in \beta \mathrm{Glimm}(A))
\]
is a $\ast$-isomorphism of $A$ onto the C$^{\ast}$-algebra of  sections of the bundle $\mathcal{A}$ vanishing at infinity on $\mathrm{Glimm}(A)$.
\end{enumerate}

As a consequence of Theorem~\ref{t:homeo2}, we can see that for any C$^{\ast}$-algebras $A$ and $B$ the canonical bundle associated with $A \otimes_{\alpha} B$ has base space homeomorphic to $\left( \mathrm{Glimm}(A) \times \mathrm{Glimm}(B) , \tau_{cr} \right) $ (or its Stone-\v{C}ech compactification), and fibre algebras $\ast$-isomorphic to
\[
\frac{A \otimes_{\alpha} B }{\Delta (G_p , G_q)} \mbox{ for } (p,q) \in \mathrm{Glimm}(A) \times \mathrm{Glimm} (B),
\]
and zero otherwise.

Lee's Theorem~\cite[Theorem 4]{lee} asserts that the bundle $\mathcal{A}$ is in fact continuous (i.e. $A$ is $\ast$-isomorphic to a maximal full algebra of operator fields in the sense of Fell~\cite{fell}) if and only if the complete regularisation map $\rho_A$ is open (see also~\cite[Theorem 2.1]{arch_som_qs} and~\cite[Theorem 3.3]{nilsen_bundles}).  Thus in the case of a minimal tensor product of C$^{\ast}$-algebras, it is natural to ask whether  $\rho_{\alpha}$ being open is equivalent to $\rho_A$ and $\rho_B$ being open. It follows from~\cite[Lemma 2.2 and Theorem 2.3]{kaniuth} that this is indeed the case when $A \otimes_{\alpha} B$ satisfies property (F).  In this section we consider this question under more general hypotheses.

It is well known that $A \otimes_{\alpha} B$ satisfies property (F) if and only if $\Phi (I,J ) = \Delta (I,J)$ for all $(I,J) \in \mathrm{Id}' (A) \times \mathrm{Id}' (B)$; see~\cite[Theorem 5 (2)]{tomiyama} for example.  The assumption that $\Phi (G,H) = \Delta (G,H)$ for all Glimm ideals of $A$ and $B$ is weaker in general. For example, if $H$ is an infinite dimensional Hilbert space then $B(H) \otimes_{\alpha} B(H)$ does not satisfy property (F)~\cite[Corollary 7]{wasserman}.  However, $\mathrm{Glimm}(B(H)) $ is a one point space consisting of the zero ideal, and clearly $\Phi( \{ 0 \}, \{ 0 \} ) = \Delta ( \{ 0 \} , \{ 0 \} )$.
%reference
It appears to be unknown whether there exist C$^{\ast}$-algebras $A$ and $B$ and Glimm ideals $(G,H) \in \mathrm{Glimm}(A) \times \mathrm{Glimm}(B)$ such that $\Delta (G,H) \subsetneq \Phi (G,H)$.

The condition that $\Delta = \Phi$ on $\mathrm{Glimm}(A)$ is equivalent to requiring that the fibre algebras of the canonical  bundle associated with $A \otimes_{\alpha} B$ are given by the minimal tensor products of the corresponding fibres of the bundles of $A$ and $B$, that is
\begin{equation}
\label{e:fibre}
 \left\{ (A/G_p ) \otimes_{\alpha} (B/G_q) : (p,q) \in \mathrm{Glimm}(A) \times \mathrm{Glimm}(B) \right\},
\end{equation}
(with topology inherited from $\left( \mathrm{Glimm}(A) \times \mathrm{Glimm}(B) , \tau_{cr} \right)$ by Theorem~\ref{t:homeo2}).    Indeed one may always consider an element $c \in A \otimes_{\alpha} B$ as a cross-section of the fibred space given by~(\ref{e:fibre}) via $(p,q) \mapsto c + \Phi (G_p , G_q )$ for $(p,q) \in \mathrm{Glimm}(A) \times \mathrm{Glimm}(B)$~\cite[p. 136-137]{arch_cb}.  In the case that the bundles of $A$ and $B$ are continuous, it was shown in~\cite[Corollary 3.1]{arch_cb} that this representation of $A \otimes_{\alpha} B$ defines a continuous C$^{\ast}$-bundle over $\left( \mathrm{Glimm}(A) \times \mathrm{Glimm} (B) , \tau_{p} \right)$ in the obvious way  if and only if $\Delta = \Phi$ on $\mathrm{Glimm}(A) \times \mathrm{Glimm} (B)$.

Theorem~\ref{t:rho_alpha} below asserts that under the assumption $\Delta = \Phi $ on $\mathrm{Glimm}(A) \times \mathrm{Glimm} (B)$, the Dauns-Hofmann representation of $A \otimes_{\alpha} B$ defines a continuous C$^{\ast}$-bundle over $\mathrm{Glimm} (A \otimes_{\alpha} B )$ if and only if $A$ and $B$ define continuous C$^{\ast}$-bundles over $\mathrm{Glimm}(A)$ and $\mathrm{Glimm} (B)$ respectively.

\begin{lemma}
\label{l:ucapimphi}
Let $A$ and $B$ be C$^{\ast}$-algebras such that $\Delta (G,H) = \Phi (G,H)$ for all $(G,H) \in \mathrm{Glimm}(A) \times \mathrm{Glimm} (B)$, and let $\mathcal{U} \subseteq \mathrm{Prim}(A \otimes_{\alpha} B )$ be open.  Then 
\[ \rho_{\alpha} ( \mathcal{U} ) = \rho_{\alpha} \left( \mathcal{U} \cap \Phi ( \mathrm{Prim}(A) \times \mathrm{Prim}(B) ) \right) .\]
\end{lemma}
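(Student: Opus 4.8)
The inclusion $\rho_{\alpha}(\mathcal{U}) \supseteq \rho_{\alpha}(\mathcal{U} \cap \Phi(\mathrm{Prim}(A) \times \mathrm{Prim}(B)))$ is trivial, since $\mathcal{U} \cap \Phi(\mathrm{Prim}(A) \times \mathrm{Prim}(B)) \subseteq \mathcal{U}$; the content lies in the reverse inclusion. The plan is to take a point $m \in \rho_{\alpha}(\mathcal{U})$, represented by some $M \in \mathcal{U}$ with $\rho_{\alpha}(M) = m$, and to produce a point of $\Phi(\mathrm{Prim}(A) \times \mathrm{Prim}(B))$ that lies both in $\mathcal{U}$ and in the same $\approx$-class $[M] = \rho_{\alpha}^{-1}(m)$ as $M$. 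Using the surjectivity of $\rho_{\alpha} \circ \Phi$ noted after Theorem~\ref{t:homeo}, I would first choose $(P,Q) \in \mathrm{Prim}(A) \times \mathrm{Prim}(B)$ with $\Phi(P,Q) \approx M$, and set $p = \rho_A(P)$, $q = \rho_B(Q)$, so that $\psi(m) = (p,q)$ by Theorem~\ref{t:homeo}.

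Next I would identify the equivalence class $[M]$ explicitly. Since the $\approx$-classes in $\mathrm{Prim}(A \otimes_{\alpha} B)$ are exactly the hulls of the Glimm ideals, we have $[M] = \mathrm{hull}(G_m)$, where $G_m$ is the Glimm ideal of $A \otimes_{\alpha} B$ at $m$. By Theorem~\ref{t:homeo2}(i), identifying the Glimm spaces with their corresponding sets of ideals, $G_m = \Delta(G_p, G_q)$, and here the hypothesis $\Delta(G_p, G_q) = \Phi(G_p, G_q)$ enters: it lets me rewrite $[M] = \mathrm{hull}(\Phi(G_p, G_q))$. Since $\mathrm{hull}(G_p) = [P]$ and $\mathrm{hull}(G_q) = [Q]$, Proposition~\ref{p:phi}(iv) applied with $I = G_p$ and $J = G_q$ then shows that $\Phi([P] \times [Q])$ is dense in $\mathrm{hull}(\Phi(G_p, G_q)) = [M]$ in the subspace (hull-kernel) topology. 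Note that $\Phi([P] \times [Q]) \subseteq \Phi(\mathrm{Prim}(A) \times \mathrm{Prim}(B))$.

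To finish, I would observe that $\mathcal{U} \cap [M]$ is relatively open in $[M]$ and nonempty, as it contains $M$. Because $\Phi([P] \times [Q])$ is dense in $[M]$, it must meet $\mathcal{U} \cap [M]$; choosing $M'$ in this intersection gives $M' \in \mathcal{U}$, $M' \in \Phi(\mathrm{Prim}(A) \times \mathrm{Prim}(B))$, while $M' \in [M]$ forces $\rho_{\alpha}(M') = m$. Hence $m \in \rho_{\alpha}(\mathcal{U} \cap \Phi(\mathrm{Prim}(A) \times \mathrm{Prim}(B)))$, as required. The one genuinely delicate point is the use of the hypothesis $\Delta = \Phi$ to identify $[M]$ with $\mathrm{hull}(\Phi(G_p, G_q))$: without it one knows only that $\Delta(G_p, G_q) \subseteq \Phi(G_p, G_q)$, so $\mathrm{hull}(\Phi(G_p, G_q)) \subseteq [M]$ could be proper, and Proposition~\ref{p:phi}(iv) would then yield density only in this possibly smaller set, leaving no guarantee that the $\Phi$-image meets the open set $\mathcal{U} \cap [M]$.
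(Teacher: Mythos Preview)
Your proof is correct and follows essentially the same approach as the paper's: identify the Glimm ideal $G_m$ as $\Phi(G_p,G_q)$ using the hypothesis $\Delta=\Phi$ on Glimm ideals together with Theorem~\ref{t:homeo2}, invoke Proposition~\ref{p:phi}(iv) to get that $\Phi(\mathrm{hull}(G_p)\times\mathrm{hull}(G_q))$ is dense in $\mathrm{hull}(G_m)=[M]$, and then intersect with the relatively open nonempty set $\mathcal{U}\cap[M]$. The only cosmetic difference is that you first pick $(P,Q)$ via the surjectivity of $\rho_{\alpha}\circ\Phi$ and then pass to $(p,q)$, whereas the paper goes directly to $(p,q)$ via Theorem~\ref{t:homeo2}; this changes nothing substantive.
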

\begin{proof}
Let $m \in \rho_{\alpha} ( \mathcal{U} )$ and choose $M \in \mathcal{U}$ such that $\rho_{\alpha} (M) = m$.  Denote by $G_m = k \left( [ M ] \right)$ the corresponding Glimm ideal of $A \otimes_{\alpha} B$.  Then there exist $(p,q) \in \mathrm{Glimm}(A) \times \mathrm{Glimm}(B)$ and corresponding Glimm ideals $G_p$ and $G_q$ of $A$ and $B$ respectively with $G_m = \Phi (G_p,G_q) = \Delta (G_p , G_q )$ by Theorem~\ref{t:homeo2}. 

Now $M \in \mathrm{hull}(G_m) \cap \mathcal{U}$, which is a nonempty relatively open subset of $\mathrm{hull}(G_m)$.  By Proposition~\ref{p:phi}(iv), $\Phi  \left( \mathrm{hull} (G_p) \times \mathrm{hull} (G_q) \right)$ is dense in $\mathrm{hull} (G_m) = \rho_{\alpha}^{-1} ( \{ m \})$. Hence there exists $(P,Q) \in \mathrm{hull}(G_p) \times \mathrm{hull}(G_q)$ such that  $\Phi (P,Q) \in \mathrm{hull}(G_m) \cap\ \mathcal{U}$.  In particular $\Phi (P,Q) \in \mathcal{U} \cap  \Phi (\mathrm{Prim}(A) \times \mathrm{Prim}(B) ) $, and $\rho_{\alpha} \circ \Phi  (P,Q) = m$.  It follows that $\rho_{\alpha} ( \mathcal{U} ) \subseteq \rho_{\alpha} \left( \mathcal{U} \cap \Phi ( \mathrm{Prim}(A) \times \mathrm{Prim}(B) ) \right)$, and the reverse inclusion is trivial.

\end{proof}

\begin{theorem}
\label{t:rho_alpha}
Let $A$ and $B$ be C$^{\ast}$-algebras such that $\Delta (G,H) = \Phi (G,H)$ for all $(G,H) \in \mathrm{Glimm}(A) \times \mathrm{Glimm} (B)$.  Then the following are equivalent:
\begin{enumerate}
\item[(i)]  $\rho_{\alpha}$ is an open map with respect to $\tau_{cr}$ on $\mathrm{Glimm}(A \otimes_{\alpha} B )$,
\item[(ii)]  $\rho_{\alpha}$ is an open map with respect to $\tau_q$ on $\mathrm{Glimm}(A \otimes_{\alpha} B )$,
\item[(iii)] $\rho_A$ and $\rho_B$ are open maps with respect to $\tau_{cr}$ on $\mathrm{Glimm}(A)$ and $\mathrm{Glimm}(B)$ respectively,
\item[(iv)]  $\rho_A$ and $\rho_B$ are open maps with respect to $\tau_q$ on $\mathrm{Glimm}(A)$ and $\mathrm{Glimm}(B)$ respectively.
\end{enumerate}
\end{theorem}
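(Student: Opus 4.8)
The plan is to collapse the two pairs of conditions using a single-algebra fact and then to treat the connection between $\rho_{\alpha}$ and the pair $\rho_A,\rho_B$ purely in the $\tau_{cr}$ setting. For any C$^{\ast}$-algebra $C$ the complete regularisation map $\rho_C$ is $\tau_{cr}$-open if and only if it is $\tau_q$-open: this is the content recorded in Remark~\ref{r:tauq} from~\cite{arch_som_qs}, since openness with respect to either topology already forces $\tau_{cr}=\tau_q$ on $\mathrm{Glimm}(C)$. Applying this to $C=A\otimes_{\alpha}B$ yields (i)$\Leftrightarrow$(ii), and applying it separately to $A$ and to $B$ yields (iii)$\Leftrightarrow$(iv). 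It therefore remains to prove (iii)$\Rightarrow$(i) and (i)$\Rightarrow$(iii), working entirely with $\tau_{cr}$.

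For (iii)$\Rightarrow$(i) I would assume $\rho_A$ and $\rho_B$ are open. Then Lemma~\ref{l:tauq}(ii) gives $\tau_p=\tau_{cr}$ on $\mathrm{Glimm}(A)\times\mathrm{Glimm}(B)$, so the homeomorphism $\psi$ of Theorem~\ref{t:homeo} identifies $(\mathrm{Glimm}(A\otimes_{\alpha}B),\tau_{cr})$ with the $\tau_p$-product. Given an open $\mathcal{U}\subseteq\mathrm{Prim}(A\otimes_{\alpha}B)$, Lemma~\ref{l:ucapimphi} (this is precisely where the hypothesis $\Delta=\Phi$ is used) lets me pass to the trace on the image of $\Phi$: with $\mathcal{V}=\Phi^{-1}(\mathcal{U}\cap\Phi(\mathrm{Prim}(A)\times\mathrm{Prim}(B)))$, open by Proposition~\ref{p:phi}(ii), one has $\rho_{\alpha}(\mathcal{U})=\rho_{\alpha}(\Phi(\mathcal{V}))$. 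The relation $\psi\circ\rho_{\alpha}\circ\Phi=\rho_A\times\rho_B$ of Theorem~\ref{t:homeo} then gives $\psi(\rho_{\alpha}(\mathcal{U}))=(\rho_A\times\rho_B)(\mathcal{V})$; since $\rho_A$ and $\rho_B$ are open, $\rho_A\times\rho_B$ is open as a map into $(\mathrm{Glimm}(A)\times\mathrm{Glimm}(B),\tau_p)$, so this image is $\tau_p$-open, and transporting back through $\psi$ shows $\rho_{\alpha}(\mathcal{U})$ is $\tau_{cr}$-open.

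The converse (i)$\Rightarrow$(iii) is the main obstacle, and for two reasons. First, to probe $\rho_A$ I want to push the open cylinder $\mathcal{W}\times\mathrm{Prim}(B)$ (with $\mathcal{W}\subseteq\mathrm{Prim}(A)$ open) through $\rho_{\alpha}\circ\Phi$, but $\Phi(\mathcal{W}\times\mathrm{Prim}(B))$ is in general only \emph{relatively} open in the dense, non-open image of $\Phi$. I would circumvent this by picking an open $\mathcal{U}\subseteq\mathrm{Prim}(A\otimes_{\alpha}B)$ with $\mathcal{U}\cap\Phi(\mathrm{Prim}(A)\times\mathrm{Prim}(B))=\Phi(\mathcal{W}\times\mathrm{Prim}(B))$, and applying Lemma~\ref{l:ucapimphi} again to get $\rho_{\alpha}(\mathcal{U})=\rho_{\alpha}(\Phi(\mathcal{W}\times\mathrm{Prim}(B)))$; openness of $\rho_{\alpha}$ together with the homeomorphism $\psi$ of Theorem~\ref{t:homeo} then make $\psi(\rho_{\alpha}(\mathcal{U}))=\rho_A(\mathcal{W})\times\mathrm{Glimm}(B)$ a $\tau_{cr}$-open subset of the product. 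Second, because $\tau_{cr}$ may be strictly finer than $\tau_p$, I cannot simply project to conclude that $\rho_A(\mathcal{W})$ is open. To close this gap I would fix $y_0\in\mathrm{Glimm}(B)$, say $y_0=\rho_B(Q_0)$, and invoke the universal property of the complete regularisation: the continuous map $P\mapsto(\rho_A(P),y_0)$ from $\mathrm{Prim}(A)$ into the completely regular space $(\mathrm{Glimm}(A)\times\mathrm{Glimm}(B),\tau_{cr})$ factors through $\rho_A$ as a $\tau_{cr}$-continuous section $\iota_A\colon\mathrm{Glimm}(A)\to(\mathrm{Glimm}(A)\times\mathrm{Glimm}(B),\tau_{cr})$, $x\mapsto(x,y_0)$. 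Then $\rho_A(\mathcal{W})=\iota_A^{-1}(\rho_A(\mathcal{W})\times\mathrm{Glimm}(B))$ is $\tau_{cr}$-open, so $\rho_A$ is open; the symmetric argument handles $\rho_B$, giving (iii) and completing the cycle of equivalences.
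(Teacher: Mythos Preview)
Your proof is correct and for the equivalences (i)$\Leftrightarrow$(ii), (iii)$\Leftrightarrow$(iv), and for (iii)$\Rightarrow$(i), it follows the paper's argument essentially verbatim.

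The difference lies in (i)$\Rightarrow$(iii). Both you and the paper first establish, via Lemma~\ref{l:ucapimphi} and Theorem~\ref{t:homeo}, that $\rho_A(\mathcal{W})\times\mathrm{Glimm}(B)$ is $\tau_{cr}$-open in $\mathrm{Glimm}(A)\times\mathrm{Glimm}(B)$ for every open $\mathcal{W}\subseteq\mathrm{Prim}(A)$; the issue is then to descend to openness of $\rho_A(\mathcal{W})$ itself. The paper handles this by a quotient-topology argument: since $\rho_A\times\rho_B$ is $\tau_{cr}$-open it is also open for the quotient topology $\tilde{\tau}_q$ induced by $\rho_A\times\rho_B$, and pulling back $\rho_A(\mathcal{W})\times\mathrm{Glimm}(B)$ through $\rho_A\times\rho_B$ shows $\rho_A^{-1}(\rho_A(\mathcal{W}))$ is open, giving $\tau_q$-openness of $\rho_A$ directly (i.e.\ condition~(iv)). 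You instead construct a $\tau_{cr}$-continuous section $\iota_A\colon x\mapsto(x,y_0)$ via the universal property of the complete regularisation, and read off $\rho_A(\mathcal{W})=\iota_A^{-1}(\rho_A(\mathcal{W})\times\mathrm{Glimm}(B))$ as $\tau_{cr}$-open. Your section is precisely (the continuity half of) Lemma~\ref{l:y0}, which the paper proves independently for use in Proposition~\ref{p:proj}; so in effect you have folded that lemma into the present proof. Your route has the aesthetic advantage of remaining entirely within $\tau_{cr}$, while the paper's route avoids the section construction at the cost of passing through $\tilde{\tau}_q$. Both are short and neither is materially simpler than the other.
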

\begin{proof}
Note that by Lemma~\ref{l:tauq}(i), (i) is equivalent to (ii), and (iii) is equivalent to (iv). We will show that (i) implies (iv) and that (iii) implies (i).

Suppose that (i) holds.  We first claim that $\rho_A \times \rho_B$ is open as a map into $\left( \mathrm{Glimm}(A) \times \mathrm{Glimm}(B) , \tau_{cr} \right)$.  Take an open subset $\mathcal{U} \subseteq \mathrm{Prim}(A) \times \mathrm{Prim} (B)$.  Then since the restriction of $\Phi$ to $\mathrm{Prim}(A) \times \mathrm{Prim}(B)$ is a homeomorphism onto its image by Proposition~\ref{p:phi}(ii), there is an open subset $\mathcal{U}' \subseteq \mathrm{Prim}(A \otimes_{\alpha} B)$ with $\mathcal{U}' \cap  \Phi ( \mathrm{Prim}(A) \times \mathrm{Prim}(B)) = \Phi ( \mathcal{U} )$.  By Lemma~\ref{l:ucapimphi} $\rho_{\alpha} ( \mathcal{U} ' ) = \rho_{\alpha} ( \Phi (\mathcal{U}))$.  Then by Theorem~\ref{t:homeo}
\[
(\rho_A \times \rho_B ) ( \mathcal{U} ) = (\psi \circ \rho_{\alpha} \circ \Phi ) ( \mathcal{U} ) = (\psi \circ \rho_{\alpha} ) ( \mathcal{U}'),
\]
which is $\tau_{cr}$-open since $\rho_{\alpha}$ is an open map and $\psi$ is a homeomorphism.

As in the proof of~\cite[p.351]{arch_som_qs}, $\tau_{cr}$ must agree with the quotient topology $\tilde{\tau}_q$ on $\mathrm{Glimm}(A) \times \mathrm{Glimm}(B)$ induced by the map $\rho_A \times \rho_B$. In particular $\rho_A \times \rho_B$ is a $\tilde{\tau}_q$-open map.   To see that $\rho_A$ is open, let $\mathcal{W} \subseteq \mathrm{Prim}(A)$ be open.  Then 
\[
(\rho_A \times \rho_B ) \left( \mathcal{W} \times \mathrm{Prim}(B) \right) = \rho_A ( \mathcal{W} ) \times \mathrm{Glimm}(B)
\]
is $\tilde{\tau}_q$-open, hence $(\rho_A \times \rho_B )^{-1} \left( \rho_A ( \mathcal{W} ) \times \mathrm{Glimm}(B) \right)$ is open.  By Lemma~\ref{l:prodcr} we have
\[
(\rho_A \times \rho_B )^{-1} \left( \rho_A ( \mathcal{W} ) \times \mathrm{Glimm}(B) \right) = \rho_A^{-1} \left( \rho_A ( \mathcal{W} ) \right) \times \mathrm{Prim}(B),
\]
so that in particular, $\rho_A^{-1} \left( \rho_A ( \mathcal{W} ) \right)$ is open.  It follows that $\rho_A$ is a $\tau_q$-open map.  A similar argument shows that $\rho_B$ is $\tau_q$-open, hence (i) implies (iv).

Assume that (iii) holds and take an open subset $\mathcal{U} \subseteq \mathrm{Prim}(A \otimes_{\alpha} B )$. Then $\mathcal{U} \cap \Phi ( \mathrm{Prim}(A) \times \mathrm{Prim}(B) )$ is a relatively open subset of $\Phi ( \mathrm{Prim}(A) \times \mathrm{Prim}(B) )$.  Again by Proposition~\ref{p:phi}(ii), there is an open subset $\mathcal{V} \subseteq \mathrm{Prim}(A) \times \mathrm{Prim}(B)$ such that $\Phi ( \mathcal{V} ) = \mathcal{U} \cap \Phi ( \mathrm{Prim}(A) \times \mathrm{Prim}(B) )$.  By Theorem~\ref{t:homeo} we then have
\[
(\psi \circ \rho_{\alpha} ) \left( \mathcal{U} \cap  \Phi( \mathrm{Prim}(A) \times \mathrm{Prim}(B) ) \right) = (\psi \circ \rho_{\alpha} \circ \Phi ) ( \mathcal{V} ) = (\rho_A \times \rho_B ) ( \mathcal{V} ),
\]
which is $\tau_{cr}$-open since $\rho_A \times \rho_B$ is a $\tau_{cr}$-open map by Lemma~\ref{l:tauq}(ii).  Together with Lemma~\ref{l:ucapimphi}, this shows that
\[
\rho_{\alpha} ( \mathcal{U} ) = \rho_{\alpha} \left( \mathcal{U} \cap \Phi ( \mathrm{Prim}(A) \times \mathrm{Prim}(B) ) \right) = \psi^{-1} \left( (\rho_A \times \rho_B )  ( \mathcal{V} ) \right),
\]
which is open since $\psi$ is a homeomorphism.  Hence $\rho_{\alpha}$ is an open map.

\end{proof}

Following a suggestion of R.J. Archbold, below we give a similar result to~\cite[Proposition 4.1]{arch_cb}.  Under the assumption that $A$ and $B$ each have at least one Glimm quotient containing a nonzero projection, we show in Proposition~\ref{p:proj}  that the implication (i)$\Rightarrow$(iii) of Theorem~\ref{t:rho_alpha} does not require $\Delta = \Phi$ on $\mathrm{Glimm}(A) \times \mathrm{Glimm}(B)$.  We establish as a corollary that under the same assumptions on $A$ and $B$, if $A \otimes_{\alpha} B$ is quasi-standard then $A$ and $B$ must be quasi-standard.

\begin{lemma}
\label{l:y0}
Let $X$ and $Y$ be topological spaces.  Then for any $y_0 \in Y$, the map sending $\rho_X (x) \mapsto \rho_{X \times Y} (x, y_0 )$ is a homeomorphic embedding of $\rho X$ into $\rho(X \times Y)$, with respect to the corresponding $\tau_{cr}$-topologies on each space.
\end{lemma}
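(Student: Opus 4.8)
The plan is to recognise the map in the statement as the image under the Tychonoff functor of the continuous slice inclusion $j : X \to X \times Y$ given by $j(x) = (x, y_0)$. This map $j$ is continuous, since its two coordinate functions are the identity on $X$ and the constant $y_0$. Hence the functoriality of $\rho$ established earlier in this section produces a continuous map $j^{\rho} : \rho X \to \rho(X \times Y)$ satisfying $j^{\rho}(\rho_X(x)) = \rho_{X \times Y}(j(x)) = \rho_{X \times Y}(x, y_0)$. This is exactly the assignment in the statement, so both well-definedness (independence of the representative $x$) and continuity with respect to $\tau_{cr}$ come for free from the functorial construction.

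Next I would exhibit a continuous left inverse. Let $\pi_X : X \times Y \to X$ denote the first projection, which is continuous, and consider the induced continuous map $(\pi_X)^{\rho} : \rho(X \times Y) \to \rho X$. Because $\pi_X \circ j = \mathrm{id}_X$ and the Tychonoff functor preserves composition and identities, we obtain
\[
(\pi_X)^{\rho} \circ j^{\rho} = (\pi_X \circ j)^{\rho} = (\mathrm{id}_X)^{\rho} = \mathrm{id}_{\rho X}.
\]

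From here the conclusion is formal: a continuous injection admitting a continuous left inverse is a homeomorphism onto its image. Indeed, $j^{\rho}$ is injective because it has a left inverse, and the inverse of the corestriction $j^{\rho} : \rho X \to j^{\rho}(\rho X)$ is precisely the restriction of $(\pi_X)^{\rho}$ to $j^{\rho}(\rho X)$, which is continuous as a restriction of a continuous map. Thus $j^{\rho}$ is a homeomorphic embedding of $\rho X$ into $\rho(X \times Y)$ with respect to the $\tau_{cr}$-topologies, as required.

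There is essentially no obstacle here once the functorial description is spotted; the only point requiring care is to confirm that continuity of all maps is understood with respect to $\tau_{cr}$, which is automatic since the functor $\rho$ is defined using exactly this topology. Should one prefer a hands-on verification that $j^{\rho}$ is open onto its image, one can instead check directly that for $f \in C^b(X)$ the lifted function $\tilde{f} = f \circ \pi_X \in C^b(X \times Y)$ satisfies $j^{\rho}(\mathrm{coz}(f^{\rho})) = \mathrm{coz}(\tilde{f}^{\rho}) \cap j^{\rho}(\rho X)$; since the cozero sets form a base for $\tau_{cr}$, this again yields the embedding, with injectivity following from the set-level identity $(x_1, y_0) \approx (x_2, y_0)$ if and only if $x_1 \approx x_2$ recorded in Lemma~\ref{l:prodcr}.
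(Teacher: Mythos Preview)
Your proof is correct and takes a genuinely different route from the paper's. The paper argues concretely: it identifies $\rho(X \times Y)$ with $(\rho X \times \rho Y, \tau_{cr})$ via Lemma~\ref{l:prodcr}, observes that the slice map is trivially an embedding for the product topology $\tau_p$, and then verifies by hand that $\tau_p$ and $\tau_{cr}$ restrict to the same topology on $\rho X \times \{\rho_Y(y_0)\}$. That last step is done by taking a $\tau_{cr}$-basic cozero neighbourhood $\mathrm{coz}(g^{\rho})$ of a point on the slice and producing the section $f(x) = g(x,y_0)$ in $C^b(X)$, whose cozero set cuts out exactly $\mathrm{coz}(g^{\rho}) \cap (\rho X \times \{\rho_Y(y_0)\})$.

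Your argument bypasses this cozero-set computation entirely by exploiting functoriality: applying $\rho$ to the retraction pair $\pi_X \circ j = \mathrm{id}_X$ immediately yields a continuous left inverse $(\pi_X)^{\rho}$ to $j^{\rho}$, and a continuous map with a continuous left inverse is automatically a topological embedding. This is cleaner and more conceptual, and it makes transparent \emph{why} the result holds --- namely because the slice inclusion is a section of the projection and functors preserve sections. The paper's approach, by contrast, yields the extra information that $\tau_p$ and $\tau_{cr}$ agree on the slice, which is not stated but is implicit in your argument too (since the embedding for $\tau_p$ is obvious and you have shown it for $\tau_{cr}$). Your closing remark about lifting $f$ to $\tilde f = f \circ \pi_X$ is essentially the dual of the paper's restriction step, and either direction suffices.
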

\begin{proof}
By Lemma~\ref{l:prodcr} we may identify $\rho (X \times Y )$ with $\left( \rho X \times \rho Y ,\tau_{cr} \right)$ under the canonical mapping $\rho_{X \times Y} (x,y) \mapsto ( \rho_X (x) , \rho_Y (y) )$.  Clearly the map sending $\rho_X (x) \mapsto ( \rho_X (x) , \rho_Y (y_0 ) )$ is a homeomorphic embedding of $\rho X$ into $\rho X \times \rho Y $ with the product topology $\tau_p$. Thus we must show that the restrictions of the $\tau_p$ and $\tau_{cr}$ topologies to the subspace $\rho X \times \{ \rho_Y (y_0 ) \}$ are equal. Since $\tau_p \leq \tau_{cr}$ it will suffice to show that for any $\tau_{cr}$-open subset $\mathcal{O}$ of $\rho X \times \rho Y$ and $x_0 \in X$ such that $\left( \rho_X(x_0) , \rho_Y (y_0) \right) \in \mathcal{O}$, there is a cozero set neighbourhood $\mathcal{U}$ of $\rho_X (x_0)$ in $\rho X$ such that $\mathcal{U} \times \{ \rho_Y (y_0) \} \subseteq \mathcal{O}$.

Since $\mathcal{O}$ is $\tau_{cr}$-open there is $g \in C^b (X \times Y )$ such that $\mathrm{coz} (g^{\rho} )$ is a neighbourhood of $\left( \rho_X (x_0 ) , \rho_Y (y_0) \right)$ contained in $\mathcal{O}$.  Define $f \in C^b (X)$ via $f(x) = g (x , y_0 )$, then $f^{\rho} \in C^b ( \rho X )$ and $f^{\rho} \left( \rho_X (x) \right) = g^{\rho} \left( \rho_X (x) , \rho_Y (y_0 ) \right)$ for all $x \in X$.  In particular , $f^{\rho} ( \rho_X (x) ) = 0 $ if and only if $g^{\rho} \left( \rho_X (x) , \rho_Y (y_0 ) \right) = 0$, so that
\[
\mathrm{coz} (f^{\rho} ) \times \{ \rho_Y (y_0 ) \} = \mathrm{coz} (g^{\rho} ) \cap \left( \rho X  \times \{ \rho_Y (y_0) \} \right),
\]
as required.

\end{proof}

\begin{proposition}
\label{p:proj}
Suppose that  $A$ and $B$ are C$^{\ast}$-algebras such that the complete regularisation map $\rho_{\alpha} : \mathrm{Prim} (A \otimes_{\alpha} B ) \rightarrow \mathrm{Glimm}(A \otimes_{\alpha} B)$ is open. If there exists a point $q_0 \in \mathrm{Glimm}(B)$ (resp. $p_0 \in \mathrm{Glimm}(A)$) such that the quotient C$^{\ast}$-algebra $B/ G_{q_0}$ (resp. $A / G_{p_0}$) contains a nonzero projection, then $\rho_A$ (resp. $\rho_B$) is open.
\end{proposition}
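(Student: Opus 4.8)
The plan is to deduce the openness of $\rho_A$ by transporting the openness of $\rho_{\alpha}$ to the slice $\mathrm{Glimm}(A)\times\{q_0\}$ of $\mathrm{Glimm}(A\otimes_{\alpha}B)$. First I would fix a primitive ideal $Q_0\in\mathrm{hull}(G_{q_0})$, so that $\rho_B(Q_0)=q_0$, and invoke Lemma~\ref{l:y0} (with $X=\mathrm{Prim}(A)$, $Y=\mathrm{Prim}(B)$ and $y_0=Q_0$), together with the homeomorphism $\psi$ of Theorem~\ref{t:homeo}, to obtain a homeomorphic embedding $\iota:\mathrm{Glimm}(A)\to\mathrm{Glimm}(A\otimes_{\alpha}B)$ sending $\rho_A(P)\mapsto\rho_{\alpha}(\Phi(P,Q_0))$ onto the slice $\psi^{-1}(\mathrm{Glimm}(A)\times\{q_0\})$. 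By Theorem~\ref{t:homeo2} the Glimm ideal of $A\otimes_{\alpha}B$ at the slice point $\iota(p)$ is exactly $\Delta(G_p,G_{q_0})$. Since the sets $\mathcal{W}_{a,\lambda}=\{P\in\mathrm{Prim}(A):\|a+P\|>\lambda\}$ ($a\in A$, $\lambda>0$) form a base for $\mathrm{Prim}(A)$ and $\rho_A$ carries unions to unions, it suffices to show that each image $\rho_A(\mathcal{W}_{a,\lambda})=\{p:\|a+G_p\|>\lambda\}$ is open.

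The engine of the argument is the norm identity
\[
\|a\otimes b+\Delta(I,J)\|=\|a+I\|\,\|b+J\|,
\]
valid for all $I\in\mathrm{Id}'(A)$, $J\in\mathrm{Id}'(B)$ and all $a\in A$, $b\in B$. The inequality $\geq$ is immediate, since the quotient map $(A\otimes_{\alpha}B)/\Delta(I,J)\to(A/I)\otimes_{\alpha}(B/J)$ sends $a\otimes b$ to an elementary tensor of norm $\|a+I\|\,\|b+J\|$. For $\leq$ one observes that any norm-one functional $\phi$ annihilating $\Delta(I,J)$ satisfies $\phi(a\otimes b)=\phi((a+i)\otimes(b+j))$ for all $i\in I$, $j\in J$, whence $|\phi(a\otimes b)|\leq\|a+i\|\,\|b+j\|$, and taking infima gives $|\phi(a\otimes b)|\leq\|a+I\|\,\|b+J\|$. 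This is the point where the hypothesis enters: the nonzero projection $e\in B/G_{q_0}$ furnishes an element $b\in B$ (any lift of $e$) with $\|b+G_{q_0}\|=\|e\|=1$, so that the identity specialises to $\|a\otimes b+\Delta(G_p,G_{q_0})\|=\|a+G_p\|$ for every $p\in\mathrm{Glimm}(A)$.

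With this $b$ fixed, I would set $\mathcal{V}=\{M\in\mathrm{Prim}(A\otimes_{\alpha}B):\|a\otimes b+M\|>\lambda\}$, an open subset of $\mathrm{Prim}(A\otimes_{\alpha}B)$, so that $\rho_{\alpha}(\mathcal{V})$ is open by hypothesis. A slice point $\iota(p)$ lies in $\rho_{\alpha}(\mathcal{V})$ if and only if some $M$ in its fibre $\mathrm{hull}(\Delta(G_p,G_{q_0}))$ has $\|a\otimes b+M\|>\lambda$, that is, if and only if $\|a\otimes b+\Delta(G_p,G_{q_0})\|=\|a+G_p\|>\lambda$. Hence
\[
\rho_{\alpha}(\mathcal{V})\cap\iota(\mathrm{Glimm}(A))=\iota\left(\{p:\|a+G_p\|>\lambda\}\right)=\iota(\rho_A(\mathcal{W}_{a,\lambda})).
\]
As $\rho_{\alpha}(\mathcal{V})$ is open and $\iota$ is a homeomorphism onto the subspace-topologised slice, $\rho_A(\mathcal{W}_{a,\lambda})$ is open in $\mathrm{Glimm}(A)$, and the openness of $\rho_A$ follows. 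The assertion for $\rho_B$ is obtained by interchanging the roles of $A$ and $B$.

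The part I expect to require the most care is the norm identity together with the observation that it is enough to test $\rho_{\alpha}$ on the cozero sets of the \emph{elementary} tensors $a\otimes b$: it is precisely the restriction to elementary tensors that makes $\Delta$ and $\Phi$ agree on the norms in play and thereby eliminates any spillover from the fibre over $q_0$, which is what allows the argument to run without the blanket assumption $\Delta=\Phi$ used in Theorem~\ref{t:rho_alpha}. The remaining points to verify are that the projection hypothesis really does supply a fibre element of norm exactly one (so that the factor $\|b+G_{q_0}\|$ drops out cleanly), and that Lemma~\ref{l:y0} identifies the subspace topology on the slice with $\tau_{cr}$ on $\mathrm{Glimm}(A)$, so that relative openness on the slice pulls back to openness on $\mathrm{Glimm}(A)$.
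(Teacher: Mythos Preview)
Your proof is correct and follows essentially the same architecture as the paper's: establish the norm identity $\|a\otimes b+\Delta(G_p,G_{q_0})\|=\|a+G_p\|$, use the openness of $\rho_{\alpha}$ to get continuity of the norm function on $\mathrm{Glimm}(A\otimes_{\alpha}B)$, and then restrict to the slice via Lemma~\ref{l:y0} and Theorem~\ref{t:homeo2} to conclude that $p\mapsto\|a+G_p\|$ is continuous (equivalently, that $\rho_A(\mathcal{W}_{a,\lambda})$ is open).

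The one substantive difference is in how the norm identity is obtained. The paper defines $\Theta_p(a)=a\otimes e+\Delta(G_p,G_{q_0})$ and uses the fact that $e+G_{q_0}$ is a \emph{projection} to make $\Theta_p$ a $\ast$-homomorphism; then a slice-map argument shows $\ker\Theta_p=G_p$, and injectivity of a $\ast$-homomorphism gives isometry. Your direct argument for $\|a\otimes b+\Delta(I,J)\|=\|a+I\|\,\|b+J\|$ (quotienting through $\Phi(I,J)$ for $\geq$, and the functional computation for $\leq$) is valid for arbitrary elementary tensors and does not use that $b+G_{q_0}$ is a projection at all---only that it is nonzero, which is automatic since $G_{q_0}$ is proper. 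So your route in fact shows that the projection hypothesis in the statement is superfluous: the conclusion holds for any C$^{\ast}$-algebras $A$ and $B$ with $\rho_{\alpha}$ open. This is worth pointing out explicitly; as written you invoke the projection only to normalise $\|b+G_{q_0}\|=1$, which is cosmetic.
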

\begin{proof}
Let $e \in B$ such that $e+G_{q_0}$ is a nonzero projection in $B/ G_{q_0}$.  Then the map $\Theta_p : A \rightarrow (A \otimes_{\alpha} B ) / \Delta (G_p , G_{q_0} )$ defined by
\[
\Theta_p (a) = a \otimes e + \Delta (G_p , G_{q_0})
\]
is a $\ast$-homomorphism for each $p \in \mathrm{Glimm}(A)$.  We first claim that $\mathrm{ker} \Theta_p = G_p$. 

Indeed, it is clear that if $a \in G_p$ then $a \otimes e \in \Delta (G_p , G_{q_0} )$, so that $G_p \subseteq \mathrm{ker} \Theta_p$.  Now choose a state $\lambda$ of $B$ vanishing on $G_{q_0}$ such that $\lambda (e) = 1$, and consider the associated left slice map $L_{\lambda} : A \otimes_{\alpha} B \rightarrow A$ defined on elementary tensors via $L_{\lambda} ( a \otimes b ) = \lambda (b) a$, and extended to $A \otimes_{\alpha} B$ by linearity and continuity.  Then since $L_{\lambda} ( A \odot G_{q_0} ) = \{ 0 \}$ and $L_{\lambda} ( G_p \odot B ) \subseteq G_p$, we have $L_{\lambda} ( \Delta(G_p, G_{q_0} ) ) \subseteq G_p$.  In particular, if $a \in \mathrm{ker} \Theta_p$ then $a \otimes e \in \Delta (G_p , G_{q_0} )$, so that
\[
L_{\lambda} ( a \otimes e ) = \lambda (e) a = a \in G_p,
\]
hence $\mathrm{ker} \Theta_p = G_p$. It follows that for any $a \in A$ and $p \in \mathrm{Glimm}(A)$, $\| a + G_p \| = \| \Theta_p (a) \|$.  

By~\cite[Theorem 2.1 (i) $\Rightarrow$ (ii)]{arch_som_qs} the function on $\mathrm{Glimm}(A \otimes_{\alpha} B )$ sending $ x \mapsto \| a \otimes e + G_x \|$ is continuous.  Since by Theorem~\ref{t:homeo2}, $\Delta: ( \mathrm{Glimm}(A) \times \mathrm{Glimm}(B) , \tau_{cr} ) \rightarrow \mathrm{Glimm}(A \otimes_{\alpha} B )$ is a homeomorphism, the map sending $(p,q) \mapsto \| a \otimes e + \Delta (G_p , G_q ) \|$ is $\tau_{cr}$-continuous on $\mathrm{Glimm}(A) \times \mathrm{Glimm}(B) $ for every $a \in A$.  

Finally, by Lemma~\ref{l:y0} the map $p \mapsto (p,q_0 )$ is a homeomorphic embedding of $\mathrm{Glimm}(A)$ into $\left( \mathrm{Glimm}(A) \times \mathrm{Glimm} (B) , \tau_{cr} \right)$. It follows that for each $a \in A$ the function $p \mapsto \| a + G_p \|$ agrees with the composition of continuous maps given by
\[
p \mapsto (p,q_0 ) \mapsto \Delta (G_p , G_{q_0} ) \mapsto \| a \otimes e + \Delta (G_p , G_{q_0} ) \|
\]
\[
\xymatrix{
\mathrm{Glimm}(A) \ \ar@{^{(}->}[r] & \left( \mathrm{Glimm}(A) \times \mathrm{Glimm}(B) , \tau_{cr} \right) \ar[r] & \mathrm{Glimm}(A \otimes_{\alpha} B ) \ar[r] &\mathbb{R}, \\
}
\]
hence is continuous.  By~\cite[Theorem 2.1 (ii) $ \Rightarrow$ (i)]{arch_som_qs}, this implies that $\rho_A$ is open.
\end{proof}

An ideal $I$ of a C$^{\ast}$-algebra $A$ is said to be \emph{primal} if whenever $n \geq 2$ and $J_1 , \ldots , J_n$ are ideals of $A$ with zero product then $J_i \subseteq I$ for at least one $i$ between 1 and $n$. We say that $A$ is \emph{quasi-standard} if the complete regularisation map $\rho_A$ is open, and every Glimm ideal of $A$ is primal.  There are many equivalent definitions of quasi-standard C$^{\ast}$-algebras, see~\cite[Theorems 3.3 and 3.4]{arch_som_qs} for example.

\begin{corollary}
\label{c:proj}
Suppose that $A$ and $B$ are C$^{\ast}$-algebras such that $A \otimes_{\alpha} B$ is quasi-standard.  If there exists a point $q_0 \in \mathrm{Glimm}(B)$ (resp. $p_0 \in \mathrm{Glimm}(A)$) such that the quotient C$^{\ast}$-algebra $B/ G_{q_0}$ (resp. $A / G_{p_0}$) contains a nonzero projection, then $A$ (resp. $B$) is quasi-standard.
\end{corollary}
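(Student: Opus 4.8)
The plan is to establish the two defining properties of a quasi-standard C$^{\ast}$-algebra for $A$, namely that the complete regularisation map $\rho_A$ is open and that every Glimm ideal of $A$ is primal (see~\cite[Theorems 3.3 and 3.4]{arch_som_qs}). I treat only the case in which $B/G_{q_0}$ contains a nonzero projection; the other case is symmetric. Openness of $\rho_A$ is almost immediate: since $A \otimes_{\alpha} B$ is quasi-standard the map $\rho_{\alpha}$ is open, so the hypothesis on $q_0$ allows us to invoke Proposition~\ref{p:proj} directly to conclude that $\rho_A$ is open.

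It remains to prove that every Glimm ideal $G_p$ of $A$ (for $p \in \mathrm{Glimm}(A)$) is primal; this part will use neither openness nor the projection hypothesis, only quasi-standardness of $A \otimes_{\alpha} B$. Fix $p \in \mathrm{Glimm}(A)$. By Theorem~\ref{t:homeo2}(i) the ideal $\Delta(G_p, G_{q_0}) = G_p \otimes_{\alpha} B + A \otimes_{\alpha} G_{q_0}$ is a Glimm ideal of $A \otimes_{\alpha} B$, and since $A \otimes_{\alpha} B$ is quasi-standard this ideal is primal. The strategy is to push a family of ideals of $A$ forward into $A \otimes_{\alpha} B$ via $J \mapsto J \otimes_{\alpha} B$, apply primality of $\Delta(G_p, G_{q_0})$ there, and pull the resulting inclusion back to $A$.

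Concretely, suppose $n \geq 2$ and $J_1, \ldots, J_n$ are ideals of $A$ with zero product. Each $J_i \otimes_{\alpha} B$ is an ideal of $A \otimes_{\alpha} B$, and on elementary tensors $(a_1 \otimes b_1) \cdots (a_n \otimes b_n) = (a_1 \cdots a_n) \otimes (b_1 \cdots b_n)$, where $a_1 \cdots a_n = 0$ whenever $a_i \in J_i$. Since the algebraic tensor products $J_i \odot B$ are dense in $J_i \otimes_{\alpha} B$, continuity of multiplication gives $(J_1 \otimes_{\alpha} B) \cdots (J_n \otimes_{\alpha} B) = \{0\}$. Primality of $\Delta(G_p, G_{q_0})$ now yields an index $i$ with $J_i \otimes_{\alpha} B \subseteq \Delta(G_p, G_{q_0})$. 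For every $a \in J_i$ this gives $a \otimes B \subseteq J_i \otimes_{\alpha} B \subseteq \Delta(G_p, G_{q_0})$, whence $a \in \Delta(G_p, G_{q_0})^A$; but $\Delta(G_p, G_{q_0})^A = G_p$ by Lemma~\ref{l:hulld}(ii). Thus $J_i \subseteq G_p$, so $G_p$ is primal. As $p$ was arbitrary, every Glimm ideal of $A$ is primal, and together with openness of $\rho_A$ this shows that $A$ is quasi-standard.

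I expect the only slightly technical points to be the vanishing of the product $(J_1 \otimes_{\alpha} B) \cdots (J_n \otimes_{\alpha} B)$ and the identification $\Delta(G_p, G_{q_0})^A = G_p$; the latter is exactly the content of Lemma~\ref{l:hulld}(ii), and the former is a routine density-and-continuity computation. The conceptual point worth emphasising is that the projection hypothesis is needed only to secure openness of $\rho_A$ through Proposition~\ref{p:proj}, whereas primality of the Glimm ideals descends from $A \otimes_{\alpha} B$ to $A$ unconditionally, by transporting ideals through the embedding $J \mapsto J \otimes_{\alpha} B$ and using that $\Psi$ recovers $G_p$ from $\Delta(G_p, G_{q_0})$.
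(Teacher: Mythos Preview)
Your proof is correct and follows the same two-step structure as the paper: openness of $\rho_A$ via Proposition~\ref{p:proj}, and primality of the Glimm ideals of $A$ inherited from quasi-standardness of $A \otimes_{\alpha} B$. The only difference is that the paper dispatches the primality step by citing \cite[Lemma 4.1]{arch_cb}, whereas you have essentially reproved (the relevant half of) that lemma directly using $J \mapsto J \otimes_{\alpha} B$ and Lemma~\ref{l:hulld}(ii); your version is thus more self-contained but otherwise identical in strategy.
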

\begin{proof}
Since $A \otimes_{\alpha} B$ is quasi-standard, the Glimm ideals of $A$ and $B$ are primal by~\cite[Lemma 4.1]{arch_cb}. The fact that $\rho_A$ and $\rho_B$ are open under the respective hypotheses then follows from Proposition~\ref{p:proj}. 
\end{proof}

We remark that if $Z(B) \neq \{ 0 \}$ then there is $q_0 \in \mathrm{Glimm}(B)$ such that $Z(B) \not\subseteq G_{q_0}$ (otherwise $Z(B) \subseteq  \bigcap \{ G_q :q \in \mathrm{Glimm}(B) \}  = \{ 0 \}$).  It then follows from~\cite[Proposition 2.2(ii)]{arch_som_mult} that $B / G_{q_0}$ is in fact unital, and in particular contains a nonzero projection.

This condition is not necessary for the assumptions of Proposition~\ref{p:proj} and Corollary~\ref{c:proj} to hold, as can be seen by taking $B = K(H)$ for a separable infinite dimensional Hilbert space $H$.  Then $\mathrm{Glimm}(B)$ consists of the zero ideal, so that $B$ is a Glimm quotient of itself.  We have $Z(B) = \{ 0 \}$, while $B$ contains all of the finite rank projections on $H$.

\section{Examples}

Our first example shows that the topologies $\tau_p$ and $\tau_{cr}$ can indeed differ for the complete regularisation of a product of primitive ideal spaces when condition (ii) of Theorem~\ref{t:ishii} fails.  We show that the primitive ideal space of the (separable) C$^{\ast}$-algebra $A$ of~\cite[III, Example 9.2]{dauns_hofmann} admits a point $P_0$ for which no cozero set neighbourhood of $P_0$ in $\mathrm{Prim}(A)$ has w-compact closure. Further we exhibit a cozero set neighbourhood $\mathcal{U}$ of $(P_0,P_0)$ in $\mathrm{Prim}(A) \times \mathrm{Prim}(A)$ which does not contain a product $\mathcal{V} \times \mathcal{W}$ of cozero sets $\mathcal{V}, \mathcal{W} \subseteq \mathrm{Prim}(A)$. Thus $\rho_A \times \rho_V ( \mathcal{U} )$ is a $\tau_{cr}$-open subset of $\mathrm{Glimm}(A) \times \mathrm{Glimm}(B)$ which is not $\tau_p$-open.  In particular we deduce that $\mathrm{Glimm}(A \otimes_{\alpha} A)$ is not homeomorphic to $\left( \mathrm{Glimm}(A) \times \mathrm{Glimm}(A), \tau_p \right)$.
\begin{example}
\label{e:notlocallycompact}
Let $H$ be a separable infinite dimensional Hilbert space, $ \{ e_n : n=1,2,\ldots \}$ a fixed orthonormal basis for $H$, and let $K(H)$ denote the compact operators on $H$.  The subset $D(H)$ of all compact operators that are diagonalisable with respect to the basis $\{e_n\}$ is a C$^{\ast}$-subalgebra of $K(H)$.  Let
\[
A = \{ F \in C([-1,1],K(H)) : F(t) \in D(H) \mbox{ for all } t \geq 0 \} .
\]
With pointwise operations and norm $ \| F \| = \mathrm{sup} \{ \| F(t) \| : t \in [-1,1] \}$, $A$ is a C$^{\ast}$-algebra.  Each element $F$ of $A$ is given by an infinite matrix of continuous functions $F_{i,j} : [-1,1] \rightarrow \mathbb{C}$, such that
\[
F(t)e_j = \sum \{F_{i,j}(t) e_i : i=1,2, \ldots \}.
\]
The set $\mathrm{Prim}(A)$ consists of the ideals
\begin{eqnarray*}
P(t) &=& \{ F \in A : F(t)=0 \} \mbox{ for } t <0 \\
P(t,n) &=& \{ F \in A : F_{n,n} (t) =0 \} \mbox{ for } t \geq 0, n \in \mathbb{N} \\
\end{eqnarray*}

The topology on $\mathrm{Prim} (A)$ is as follows:
\begin{itemize}
\item For $t<0$, a neighbourhood basis for $P(t)$ is given by the sets
\[
N(t, \varepsilon ) : = \{ P(q) : |q-t|< \varepsilon , q <0 \}
\]
\item For $t=0$ and $n \in \mathbb{N}$, $P(0,n)$ has a neighbourhood basis of sets
\[
M(0,n, \varepsilon ) : = \{ P(q) : - \varepsilon < q < 0 \} \cup \{ P(q,n) : 0<q< \varepsilon \}
\]
\item For $t>0$ and $n \in \mathbb{N}$, a neighbourhood basis for  $P(t, n )$ is given by the sets
\[
M(t,n, \varepsilon ) : = \{ P(q,n): q>0, |q-t| < \varepsilon \} 
\]
\end{itemize}

For each $n$ let $I_n = [ 0,1 ] \times \{ n \}$, and for $t \in [0,1]$ let $t^{(n)} = (t,n) \in I_n$. Then
\[
\mathrm{Prim} (A) \equiv  [-1,0)  \cup (  \bigcup_{n=1}^{\infty} I_n ),
\]
where each point $0^{(n)}$ has a neighbourhood basis of intervals $(-\varepsilon,0)  \cup [0^{(n)}, \varepsilon^{(n)} ) $.  It is easy to see that for a pair of points $0^{(n)}$ and $0^{(m)}$, with $n \neq m$, any open neighbourhood of $0^{(n)}$ will intersect every open neighbourhood of $0^{(m)}$ in the subset $ [-1,0) $.

The complete regularisation map $\rho : \mathrm{Prim} (A) \rightarrow \mathrm{Glimm}(A)$ fixes the sets $[-1,0)$ and $ (0^{(n)},1^{(n)}]$ for all $n$.  The set $\{ 0^{(n)} : n \in \mathbb{N} \}$ is mapped to a single point $0^{(0)}$.  Moreover, $0^{(0)}$ does not have a compact neighbourhood in $\mathrm{Glimm}(A)$. For a proof of these facts, see~\cite[Examples 3.4 and 9.2]{dauns_hofmann}.  Hence
\[
\mathrm{Glimm}(A) =  [-1,0]  \cup \bigcup_{n=1}^{\infty} (0^{(n)},1^{(n)}] ,
\]
where a neighbourhood basis of $0^{(n)}$ is given by the sets $(-\delta_0,0] \cup \bigcup_{n=1}^{\infty} (0^{(n)}, \delta_n^{(n)})$ where $\delta_j > 0$ for all $j \geq 0$.  $\mathrm{Glimm}(A)$ is homeomorphic to the subset of $\mathbb{C}$ given by
\[
[-1,0] \cup \{ r e^{i \theta } : 0 < r \leq 1, \theta=\tfrac{n-1}{2n}  \pi \mbox{ for } n \geq 1 \}.
\]  

We will show that no $0^{(n)}$ has a cozero set neighbourhood in $\mathrm{Prim} (A)$ with w-compact closure.  Indeed any such neighbourhood is of the form $\rho^{-1} (V)$ where $V$ is a cozero set neighbourhood of $0$ in $\mathrm{Glimm} (A)$, and is hence a neighbourhood of $0^{(j)}$ for all $j \in \mathbb{N}$. Hence for every $j \in \mathbb{N} \cup \{ 0 \}$ there is $\varepsilon_j > 0 $ such that
\[
U : = ( - \varepsilon_0 , 0 ) \cup \bigcup_{n=1}^{\infty} [0^{(n)}, \varepsilon_n^{(n)} ) \subseteq \rho^{-1} (V).
\]
Note that
\begin{eqnarray*}
\overline{U} &=& [ - \varepsilon_0, 0 ) \cup \bigcup_{n=1}^{\infty} [ 0^{(n)}, \varepsilon_n^{(n)} ], \\
\rho (U) & = & ( -\varepsilon_0, 0 ] \cup \bigcup_{n=1}^{\infty} (0^{(n)} , \varepsilon_n^{(n)} ) ,
\end{eqnarray*}
and that $\rho (U)$ is clearly a cozero set of $\mathrm{Glimm} (A)$, hence $U$ is a cozero set of $\mathrm{Prim} (A)$.  

We claim that it suffices to consider a cozero set neighbourhood of $0^{(n)}$ of the form $U$. Indeed, if $\overline{\rho^{-1}(V)}$ were w-compact, then $\overline{U}$, being the closure of a cozero set of a w-compact space, would also be w-compact by~\cite[Proposition 3.8]{mor_ish}.  Therefore if $\overline{U}$ is not w-compact, then no cozero set neighbourhood $\rho^{-1}(V)$ of $0^{(n)}$ can be w-compact.

For each $n$ let $U_n = [ - \varepsilon_0, 0 ) \cup [ 0^{(n)} , \varepsilon_n^{(n)} ] $.  Then the collection $\{ U_n \}$ is an open cover of $\overline{U}$.

If we take any finite subcollection, w.l.o.g $U_1, \ldots , U_n$, then $\mathrm{cl}_{\tau_{\bar{U}}} ( U_1 \cup \ldots \cup U_n )$ consists of all points $x \in \overline{U}$ such that any cozero set neighbourhood of $x$ intersects $U_1 \cup \ldots \cup U_n$.  If $m>n$ and $y^{(m)} \in (0^{(m)} , \varepsilon_m^{(m)} ]$ then we can choose $g \in C^b ( (0^{(m)} , \varepsilon_m^{(m)}] )$  with $g(y^{(m)}) = 1$, vanishing off a compact neighbourhood of $y^{(m)}$ in $(0 , \varepsilon_m ]$.  Extending $g$ to be zero elsewhere on $\overline{U}$ gives a cozero set neighbourhood of $y^{(m)}$ disjoint from $U_1 \cup \ldots \cup U_n$. It follows that
\[
\mathrm{cl}_{\tau_{\bar{U}}} ( U_1 \cup \ldots \cup U_n ) = U_1 \cup \ldots \cup U_n \cup \{ 0^{(m)} : m > n \},
\]
a proper subset of $\overline{U}$.  So $0^{(n)}$ does not have a cozero set neighbourhood with w-compact closure for any $n$.

We now show that the product topology $\tau_p$ on $\mathrm{Glimm} (A) \times \mathrm{Glimm} (A)$ is strictly weaker than $\tau_{cr}$.  Note first that
\[
\mathrm{Prim} (A) \times \mathrm{Prim} (A) = [-1,0) \times [-1,0) \cup \left( [-1,0) \times \bigcup_{n=1}^{\infty} I_n \right) \cup \left( \bigcup_{m=1}^{\infty} I_m \times [-1,0) \right) \cup \left( \bigcup_{m,n=1}^{\infty} I_m \times I_n \right).
\]
The neighbourhood basis of the following types of points will be of interest:
\begin{itemize}
\item $(0^{(m)},0^{(n)} )$: sets of the form
\[
\left( ( - \delta , 0 ) \cup [ 0^{(m)} , \delta^{(m)} ) \right) \times \left( ( - \varepsilon , 0 ) \cup [0^{(n)}, \varepsilon^{(n)} ) \right)
\]
where $\delta, \varepsilon >0 $.
\item $(x , y ) \in \{0^{(m)} \} \times ( 0^{(n)}, 1^{(n)} ] $: neighbourhoods of the form
\[
\left( ( - \delta, 0 ) \cup [ 0^{(n)} , \delta ) \right) \times \left( ( (y-\varepsilon)^{(n)} , ( y+ \varepsilon )^{(n)} ) \right)
\]
where $0< \delta < 1, 0< \varepsilon < | y |$.
\item $(x,y) \in (0^{(m)},1^{(m)}] \times \{ 0^{(n)} \}$: neighbourhoods of the form
\[
 \left( (x- \delta)^{(m)} , (x+\delta)^{(m)} \right)  \times \left( ( - \varepsilon , 0 ) \cup [ 0^{(n)}, \varepsilon^{(n)} ) \right)
\]
\end{itemize}
For each $m,n \in \mathbb{N}$ define $f_{m,n} : I_m \times I_n \rightarrow [0,1]$ via
\[
f_{m,n} (x,y) = \mathrm{max} (1-mnx, 1-mny,0 ).
\]
Then $f_{m,n} (x,y) > 0$ when $x< \frac{1}{mn}$ or $y< \frac{1}{mn}$, i.e. $\mathrm{coz}( f_{m,n} ) = [0^{(m)}, \left( \frac{1}{mn} \right)^{(m)} ) \times [0^{(n)} , \left( \frac{1}{mn} \right)^{(n)} )$. Now define  $f : \mathrm{Prim} (A) \times \mathrm{Prim} (A) \rightarrow [0,1]$ via
\[
f(x,y) = \left\{ \begin{array}{cl}
		 f_{m,n} (x,y) & \mbox{ if } (x,y) \in I_m \times I_n \\
		1 & \mbox{ otherwise} \\
		
		\end{array} \right. \]
Then $f$ is continuous since $f_{m,n} ( 0^{(m)} , 0^{(n)} ) = f_{m,n} ( 0^{(m)} , y^{(n)} ) = f_{m,n} (x^{(m)},0^{(n)} ) = 1$, and by the neighbourhood bases of these points constructed above.  Moreover, the cozero set of $f$ is
\[
[-1,0) \times [-1,0) \cup \left( [-1,0) \times \bigcup_{n=1}^{\infty} I_n \right) \cup \left( \bigcup_{m=1}^{\infty} I_m \times [-1,0) \right) \cup \left( \bigcup_{m,n=1}^{\infty} [ 0^{(m)} , \left( \tfrac{1}{mn} \right)^{(m)} ) \times [0^{(n)} , \left( \tfrac{1}{mn} \right)^{(n)} ) \right)
\]

We show that $\mathrm{coz} (f)$ is not a union of cozero set rectangles.  Indeed, for $i,j \in \mathbb{N}$, suppose $U \times V$ were a cozero set neighbourhood of $(0^{(i)},0^{(j)})$ contained in $\mathrm{coz} (f)$.  As before we may assume w.l.o.g. that
\begin{eqnarray*}
U &=& ( - \delta_0,0 ) \cup \bigcup_{m=1}^{\infty} [ 0^{(m)} , \delta_m^{(m)} ) \\
V &=& ( - \varepsilon_0 , 0 ) \cup \bigcup_{n=1}^{\infty} [0^{(n)} , \varepsilon_n^{(n)} ),
\end{eqnarray*}
where $\delta_j,\epsilon_j>0$ for all $j \geq 0$.  If $U \times V \subseteq \mathrm{coz} (f)$ then in particular it must be true that
\[
\textstyle [0^{(m)} , \delta_m^{(m)} ) \times [0^{(n)}, \varepsilon_n^{(n)} ) \subseteq [0^{(m)} , \left( \frac{1}{mn} \right)^{(m)} ) \times [0^{(n)} , \left( \frac{1}{mn} \right)^{(n)} )
\]
for all $m,n \geq 1$.  In other words, $\delta_m \leq \frac{1}{mn}$ for all $n \geq 1$ and $\varepsilon_n \leq \frac{1}{mn}$ for all $m \geq 1$.  But then $\delta_m = \varepsilon_n = 0 $ for all $m,n \geq 1$.
\qed

\end{example}

In what follows we denote by $\omega_0$ the first infinite ordinal and by $\omega_1$ the first uncountable ordinal.  For $i=0,1$ we let $[0, \omega_i )$ be the space of all ordinals $\gamma < \omega_i$ and $[0,\omega_i ] = [0, \omega_i+1)$.  These spaces will be considered with the order topology, with basic open sets given by
\[
( \alpha , \beta ) : = \{ \gamma \in [0, \omega_i ] : \alpha < \gamma < \beta \},
\]
where $\alpha, \beta \in [0, \omega_i ]$ for $i=0,1$.  A useful property of the space $[0, \omega_1)$ is that $\beta ( [0 , \omega_1) ) = [0 , \omega_1]$~\cite[5.13]{gill_jer}.

It follows from~\cite[5.11(c) and 5.12(c)]{gill_jer} that the space $[0, \omega_1)$ is a non-compact pseudocompact space.  On the other hand $[0, \omega_0 )$ is homeomorphic to $\mathbb{N}$, which being infinite and discrete cannot be pseudocompact.

Our second example is a nontrivial application of Theorem~\ref{p:zma}. First we describe the C$^{\ast}$-algebra $A$ of~\cite[Appendix]{lazar_quot}, which has the property that $\mathrm{Glimm}(A)$ is pseudocompact but non-compact.  We then construct a (non-unital) $\sigma$-unital C$^{\ast}$-algebra $B$ with $\mathrm{Glimm}(B)$ compact, such that $M(A) \otimes_{\alpha} M(B) \neq M( A \otimes_{\alpha} B )$, while $ZM(A) \otimes ZM(B) = ZM(A \otimes B)$.   
\begin{example}
\label{e:zma}
Let $Y = [0, \omega_1] \times [0 , \omega_0 ] \backslash \{(\omega_1 , \omega_0 ) \}$ and denote by $S = \{ \omega_1 \} \times [ 0 , \omega_0 )$ and $T = [0 , \omega_1 ) \times \{ \omega_0 \}$.  Define a new space $X = Y \cup \{ y \}$, where $y \not\in Y$ with topology such that $Y$ is embedded homeomorphically into $X$, and $\{y\}$ is an open set whose closure is $S \cup \{ y \}$.  

Let $C = C_0 (Y)$, $D = C_0 (S)$ and let $\pi_1 : C \rightarrow D$ be the restriction map.  Let $H$ be an infinite dimensional separable Hilbert space and $ \{ p_n \}$ a sequence of infinite dimensional mutually orthogonal projections on $H$.  Define an injective $\ast$-homomorphism $ \lambda : D \rightarrow B(H)$ via $\lambda (f) = \sum_{n=1}^{\infty} f ( \omega_1 , n ) p_n$, and note that $\lambda (D) \cap K(H)  =  \{ 0 \}$.

Set $E = \lambda (D) + K(H)$ and let $\pi_2 : E \rightarrow D$ be the quotient map.  Let $A = \{ (c,e) \in C \oplus E : \pi_1 (c) = \pi_2 (e) \}$.  Then $\mathrm{Prim}(A)$ is homeomorphic to $X$.

The complete regularisation map $\rho_{A}$ maps $Y \backslash S$ to itself, and $S \cup \{ y \} $ to a single point which we will denote by $z$.  Thus $\mathrm{Glimm}(A) = \left( Y \backslash S \right) \cup \{ z \}$, where a neighbourhood basis of $z$ is given by the collection of sets of the form
\[
 \left( \bigcup_{n=1}^{\omega_0} (\alpha_n , \omega_1 ) \right) \cup \{ z \},
\] 
for some ordinals $0 < \alpha_n < \omega_1$ for all $1 \leq n \leq \omega_0$.

Note that $Y \backslash S = [0 , \omega_1 ) \times [ 0 , \omega_0 ]$, being the product of a pseudocompact space and a compact space, is necessarily pseudocompact.  It follows that $\mathrm{Glimm}(A)$ is pseudocompact.
\begin{comment}
Now consider the tensor product $A \otimes_{\alpha} B(H)$.  Then since $\mathrm{Glimm} ( B(H))$ is a one point space (corresponding to the zero ideal), $\mathrm{Glimm}(A \otimes_{\alpha} B(H))$ is homeomorphic to $\mathrm{Glimm}(A)$ by Corollary~\ref{c:wcpct2}.  It then follows from Proposition~\ref{p:zma} that
\[
ZM(A \otimes_{\alpha} B(H) ) = ZM(A) \otimes Z(B(H))  = C ( \mathrm{Glimm}(A) ) \otimes \mathbb{C} = C( \mathrm{Glimm}(A) ).
\]
\end{comment}

Consider the C$^{\ast}$-algebra $B$ of sequences $(T_n) \in B(H)$ such that $T_n \rightarrow T_{\infty} \in K(H)$, with pointwise operations and supremum norm.  Then $\mathrm{Prim}(B)$ consists of the ideals
\[
P_{n_0} = \{ (T_n) : T_{n_0} = 0 \} , K_{n_0} = \{ (T_n) : T_{n_0} \in K(H) \}
\]
for $n_0 \in \mathbb{N}$, and $P_{\infty} = \{ (T_n) : T_{\infty} = 0 \}$.  The $\approx$-equivalence classes in $\mathrm{Prim}(B)$ then consist of pairs $\{ P_{n_0} , K_{n_0} \}$ for $n_0 \in \mathbb{N}$, and $\{ P_{\infty} \}$.  As in the proof of~\cite[Proposition 3.6]{arch_som_qs}, the complete regularisation map $\rho_B : \mathrm{Prim}(B) \rightarrow \mathrm{Glimm}(B)$ is open and $\mathrm{Glimm}(B )$ is homeomorphic to $\mathbb{N} \cup \infty$, with $G_q = P_q$ for all $1 \leq q \leq \infty$.

We claim that $B$ is a $\sigma$-unital C$^{\ast}$-algebra.  Fix an orthonormal basis $\{ e_{m} : m \in \mathbb{N} \}$ of $H$.  For each $n \in \mathbb{N}$ let $1_n$ denote the projection onto the $n$-dimensional subspace of $H$ spanned by $e_1, \ldots , e_n$.  Then $\{ 1_n : n \in \mathbb{N} \}$ is an increasing approximate identity for $K(H)$.

Now define sequences $E^{(n)} = ( E^{(n)}_m )$ in $B$ via 
\[
E^{(n)}_m = \left\{ 
\begin{array}{rcl}
 1 & \mbox{ if } & m \leq n \\
 1_n & \mbox{ if } & m>n \\ 
\end{array} \right. \]
for each $n \in \mathbb{N} $.  We will show that the sequence $ \{ E^{(n)}  \}_{n}$ is an approximate identity for $B$.  Take $T = ( T_m ) \in B$ and let $\varepsilon > 0$ be given. Note that for each $n$, 
\[
\| T - E^{(n)} T \| = \sup_{m \geq 1} \| T_m - E^{(n)}_m T_m \| = \sup_{m > n} \| T_m - 1_n T_m \|,
\]
by definition of $E^{(n)}$. Then
\begin{itemize}
\item there exists $m_0 \geq 1$ such that $ \| T_m - T_{\infty} \| < \frac{\varepsilon}{4} $ whenever $m \geq m_0$, and
\item there exists $n_1 \geq 1$ such that $ \| T_{\infty} - 1_n T_{\infty} \| < \frac{\varepsilon}{4}$ whenever $n \geq n_1$ (since $T_{\infty} \in K(H)$ ).
\end{itemize}

Set $n_0 = \max (m_0 , n_1 )$.  Then if $n \geq n_0$ and $m>n$ we have
\begin{eqnarray*}
\| T_m - 1_n T_m \| & \leq & \| T_m - T_{\infty} \| + \| T_{\infty} - 1_n T_m \| \\
& \leq & \| T_m - T_{\infty} \| + \| T_{\infty} - 1_n T_{\infty} \| + \| 1_n T_{\infty} - 1_n T_m \|  \\
& \leq & \| T_m - T_{\infty} \| + \| T_{\infty} - 1_n T_{\infty} \| + \| T_{\infty} - T_m \| \\
& < & \frac{\varepsilon}{4} + \frac{\varepsilon}{4} + \frac{ \varepsilon}{4} = \frac{3 \varepsilon}{4} .
\end{eqnarray*}
In particular, for all $n \geq n_0$ we have
\[
\| T - E^{(n)} T \| = \sup_{m > n} \| T_m - 1_n T_m \| \leq \frac{3 \varepsilon}{4} < \varepsilon,
\]
so that $ \{ E^{(n)} \}_n$ is a countable approximate identity for $B$.

Now take the tensor product $A \otimes_{\alpha} B$.  Since $B$ is $\sigma$-unital, the inclusion $M(A) \otimes_{\alpha} M(B) \subseteq M ( A \otimes_{\alpha} B )$ is strict by~\cite[Theorem 3.8]{apt}.  Since $\rho_B$ is open, Proposition~\ref{p:wcpct} (ii) shows  that $\tau_{cr} = \tau_p$ on $\mathrm{Glimm}(A) \times \mathrm{Glimm}(B)$.  Hence by Theorem~\ref{t:homeo} $\mathrm{Glimm}(A \otimes_{\alpha} B)$ is homeomorphic to $\mathrm{Glimm}(A) \times \mathrm{Glimm}(B)$ with the product topology.  Moreover, as a product of a pseudocompact space and a compact space, $\mathrm{Glimm}(A) \times \mathrm{Glimm}(B)$ is pseudocompact~\cite[Proposition 8.21]{walker}.  It then follows from Theorem~\ref{p:zma} that $ZM(A) \otimes ZM(B) = ZM(A \otimes_{\alpha} B )$.

\qed
\end{example}

\textbf{Acknowledgement: } The author thanks R.J. Archbold for suggesting the topic initially and for detailed comments on an earlier draft of the paper.
\bibliography{refs2}
\bibliographystyle{rt}
\end{document}